\newtheorem{thm}{Theorem}
\newtheorem{lem}{Lemma}
\newtheorem{corollary}{Corollary}
\newtheorem{assump}{Assumption}
\newtheorem{remark}{Remark}
\newcommand{\bx}{\boldsymbol{x}}
\newcommand{\bX}{\boldsymbol{X}}
\newcommand{\bb}{\boldsymbol{b}}
\newcommand{\norm}[2]{\left\| #1 \right\|_{#2}}
\newcommand{\mb}{\boldsymbol}
\newcommand{\xs}{\mathbb}
\newcommand{\comm}[1]{}
\def\ps@pprintTitle{%
	\let\@oddhead\@empty
	\let\@evenhead\@empty
	\let\@oddfoot\@empty
	\let\@evenfoot\@oddfoot
}
\journal{Journal of Computational Physics}
\begin{document}
	
\begin{frontmatter}
	
	\title{DAS-PINNs: A deep adaptive sampling method for solving high-dimensional partial differential equations}
	\author[mymainaddress]{Kejun Tang}
	\ead{tangkj@pcl.ac.cn}
	
	\author[mysecondaddress]{Xiaoliang Wan}
	\ead{xlwan@math.lsu.edu}
	
	\author[mythirdaddress,myfourthaddress]{Chao Yang}
	\ead{chao\_yang@pku.edu.cn}
	
	\address[mymainaddress]{Peng Cheng Laboratory, Shenzhen 518052, China}
	\address[mysecondaddress]{Department of Mathematics and Center for Computation and Technology, 
	Louisiana State University, Baton Rouge 70803, USA}
	\address[mythirdaddress]{School of Mathematical Sciences, Peking University, Beijing 100871, China}
	\address[myfourthaddress]{Institute of Computing and Digital Economy, Peking University, Changsha 410205, China}

	\begin{abstract}
    In this work we propose a deep adaptive sampling (DAS) method for solving partial differential equations (PDEs), where deep neural networks are utilized to approximate the solutions of PDEs and deep generative models are employed to generate new collocation points that refine the training set. The overall procedure of DAS consists of two components: solving the PDEs by minimizing the residual loss on the collocation points in the training set and generating a new training set to further improve the  accuracy of the current approximate solution. In particular, we treat the residual as a probability density function and approximate it with a deep generative model, called KRnet. The new samples from KRnet are consistent with the distribution induced by the residual, i.e., more samples are located in the region of large residual and less samples are located in the region of small residual. Analogous to classical adaptive methods such as the adaptive finite element, KRnet acts as an error indicator that guides the refinement of the training set. Compared to the neural network approximation obtained with uniformly distributed collocation points, the developed algorithms can significantly improve the accuracy, especially for low regularity and high-dimensional problems. 
    We demonstrate the effectiveness of the proposed DAS method with numerical experiments.
		
	\end{abstract}

\begin{keyword}
deep learning; numerical approximation of PDEs;  adaptive sampling; deep generative models.
\end{keyword}

\end{frontmatter}
	
\section{Introduction}

In recent years, solving partial differential equations (PDEs) with deep learning methods has been receiving increasing attention \cite{han2018solving, weinan2021dawning, karniadakis2021physics}. Two major types of deep leaning methods have been proposed for solving PDEs, including the variational form subject to deep learning techniques \cite{weinan2018deep,kharazmi2019variational,zhu2019physics, KharKarn2021} and the physics-informed neural networks (PINNs) \cite{sirignano2018dgm,raissi2019physics,pang2019fpinns,karniadakis2021physics}, both of which reformulate a PDE problem as an optimization problem and train a deep neural network (DNN) to approximate the solution of PDE through minimizing the corresponding loss functional. The variational form is based on the weak formulation of PDEs, while the physical informed neural networks are based on the residual loss of PDEs.  Similar ideas of solving PDEs via minimizing the residual loss can be traced back to the works \cite{lagaris1998artificial,dissanayake1994neural} in the 1990's, where a shallow neural network is optimized on a priori fixed mesh as an approximation of the solution. Some efforts have been made to incorporate traditional computational techniques to enhance the performance of solving PDEs with deep neural networks. In \cite{li2019d3m,deepdd,dong2020local,WangGao21,KharKarn2021}, deep neural networks based on domain decomposition are proposed to improve the efficiency. A penalty free neural network method  \cite{sheng2020pfnn} and Phygeonet \cite{WangGao21} are developed to deal with complex geometries
and irregular domains. A weak formulation with primal and adversarial networks is proposed in \cite{zang2020weak}, where the PDE problem is converted to an operator norm minimization problem induced by the weak formulation. 

One critical step for all these methods is to approximate the loss functional, where the integral is usually approximated by the Monte Carlo method with collocation points randomly generated by a uniform distribution on the computational domain. Since the minimization of the discrete loss functional yields the approximate solution, the accuracy of the approximate solution is closely related to the accuracy of the discrete loss functional. In contrast to classical computational methods, where the main concern is the approximation error, one needs to balance the approximation error and the generalization error for the neural network approximation, where the approximation error mainly originates from the modeling capability of the neural network and the generalization error is mainly related to the data points in the training set, i.e, the random samples for the discretization of the loss functional. However, for many PDE models, the uniform random sampling strategy is not efficient especially when the PDE solution has a low regularity, in other words, an integrand of low regularity may have a large variance in terms of a uniform distribution such that the Monte Carlo approximation of the loss functional has a large prefactor before the convergence rate $O(N^{-1/2})$. This issue becomes worse for high-dimensional problems due to the curse of dimensionality. In high-dimensional spaces, most of the volume of the computational domain concentrates around its surface \cite{ blum2020foundations,vershynin2018high,wright2021high}, which means that uniform samples may become less effective for training deep neural networks to approximate high-dimensional PDEs. For example, the collocation points from the uniform distribution are not suitable for solving high-dimensional Fokker-Planck equations, while an adaptive strategy through sampling the current approximate solution is effective \cite{tang2021adaptive}. In \cite{gu2021selectnet}, a selection network is introduced to serve as a weight function to assign higher weights for samples with large point-wise residuals, which yields a more accurate  approximate solution if the selection network is properly chosen. However, to obtain a valid selection network, one needs to impose additional constraints on the selection network, which is often a non-trivial task. For low-dimensional problems, it is well known that one can employ adaptive numerical schemes to deal with PDEs with low-regularity solutions \cite{morin2002convergence,mekchay2005convergence,elman2014finite}, which also suggests that the uniform samples are not the best choice. Therefore, adaptive sampling strategies are crucial for developing more efficient and reliable deep learning techniques for the approximation of PDEs.

In this work, we develop a deep adaptive sampling method (DAS) for the neural network approximation of PDEs  based on residual minimization, where a deep generative model, called KRnet \cite{tangwandensity2020,wan2020vae, wan2021augmented}, is used to guide the sample generation for the training set. To this end, we need to construct two deep neural network models: one for approximating the solution and the other for refining the training set. The neural network approximation is achieved by the standard procedure of residual minimization. KRnet defines a transport map \cite{santambrogio2015optimal} from the data distribution to a prior distribution (e.g. the standard Gaussian). KRnet retains two traits of flow-based generative models \cite{dinh2016density,kingma2018glow}: exact invertibility of the transport map and efficient computation of the Jacobian determinant, based on which one can obtain an explicit density model
using the change of variables and an effective approach for generating samples through the invertible mapping. The key point in our proposed framework is that the residual is viewed as a probability density function (PDF) up to a constant and approximating this PDF can be achieved by minimizing the Kullback-Leibler (KL) divergence between the KRnet-induced density model and the residual-induced distribution. We use the trained KRnet to generate new collocation points to replace or refine the training set, where more points are put in the region of large residual and less points are put in the region of small residual. The updated training set is then used to further improve the accuracy of the current approximate solution. Simply speaking, KRnet acts as an error indicator for the improvement of the training set, which shares similarities with the classical adaptive finite element method subject to a residual-based posteriori error estimator. In summary, the main contributions of this work are as follows. 
\begin{itemize}
	\item We utilize a deep generative model as a generic means to reflect the correspondence between the residual and the error of approximation through efficient PDF approximation and sample generation.
	\item We propose a deep adaptive sampling (DAS) framework, including efficient sampling procedures and training algorithms, for the adaptive improvement of neural network approximation of PDEs.
\end{itemize}

The remainder of the paper is organized as follows. In the next section, we briefly describe the deep learning method used in this work for the approximation of PDEs. After that, the statistical error of the machine learning technique is illustrated from the perspective of function approximation. Our DAS approach is presented in section \ref{section_DAS}. We provide the theoretical analysis of DAS in section \ref{sec_das_analysis}. In section \ref{sec_numexp}, we demonstrate the efficiency of our adaptive sampling approach with numerical experiments. The paper is concluded in section \ref{sec_conclusion}.

\section{Deep learning for PDEs}\label{section_dlpde}

Let $\Omega\subset\xs{R}^d$ be a spatial domain, which is bounded, connected and with a polygonal boundary $\partial \Omega$, and $\mb{x} \in \xs{R}^d$ denote a spatial variable.  The PDE problem is stated as: find $u(\mb{x}) \in F: \xs{R}^d \mapsto \xs{R}$ where $F$ is a proper function space defined on $\Omega$, such that
\begin{equation} \label{eq_pde}
	\begin{aligned}
		\mathcal{L} u(\mb{x}) &= s(\mb{x}),  \quad \forall \mb{x} \in \Omega,\\
		\mathfrak{b} u(\mb{x}) &= g(\mb{x}), \quad \forall \mb{x} \in \partial \Omega,
	\end{aligned}
\end{equation}
where $\mathcal{L}$ is the partial differential operator, $\mathfrak{b}$ is the boundary operator, $s(\mb{x})$ is the source function, and $g(\mb{x})$ represents the boundary conditions.

Let $u(\mb{x};\Theta)$ be a neural network with parameters $\Theta$. In the framework of PINNs, the goal is to use $u(\mb{x};\Theta)$ to approximate the solution $u(\mb{x})$ through optimizing a loss functional defined as \cite{sirignano2018dgm,raissi2019physics}
\begin{equation} \label{eq_resloss}
	J \left( u(\mb{x};\Theta) \right) = \norm{r(\mb{x};\Theta)}{2,\Omega}^2 + \gamma \norm{b(\mb{x};\Theta)}{2, \partial \Omega}^2=J_r(u(\mb{x};\Theta)) + \gamma J_b(u(\mb{x};\Theta)),
\end{equation} 
where $r(\mb{x};\Theta) = \mathcal{L} u(\mb{x};\Theta) - s(\mb{x})$, and $b(\mb{x};\Theta) = \mathfrak{b} u(\mb{x};\Theta) - g(\mb{x})$ measure how well $u(\mb{x};\Theta)$ satisfies the partial differential equations and the boundary conditions, respectively, and $\gamma>0$ is a penalty parameter. Here, $\norm{u(\mb{x})}{2,\Omega}^2 = \int_{\Omega} |u(\mb{x})|^2 d\mb{x}$ and $\norm{u(\mb{x})}{2,\partial \Omega}^2 =\int_{\partial \Omega} |u(\mb{x})|^2 d\mb{x}$. The loss functional \eqref{eq_resloss} is usually discretized numerically before the optimization with respect to $\Theta$ is addressed. In practice, one often chooses two sets of uniformly distributed collocation points $\mathsf{S}_{\Omega} = \{\mb{x}_{\Omega}^{(i)} \}_{i=1}^{N_r}$ and $\mathsf{S}_{\partial \Omega} = \{\mb{x}_{\partial \Omega}^{(i)} \}_{i=1}^{N_b}$ respectively for the discretization of the two terms in the objective functional \eqref{eq_resloss},
leading to the following empirical loss
\begin{equation} \label{eq_discrete_loss}
	J_N \left( u(\mb{x};\Theta) \right) =\norm{r(\mb{x};\Theta)}{N_r, \mathsf{S}_{\Omega}}^2 + \hat{\gamma}\norm{b(\mb{x};\Theta)}{N_b, \mathsf{S}_{\partial \Omega}}^2,
\end{equation}
where $\hat{\gamma}>0$, and
$$\norm{u(\mb{x})}{N_r, \mathsf{S}_{\Omega}} = \left( \frac{1}{N_r} \sum_{i=1}^{N_r} u^2(\mb{x}_{\Omega}^{(i)}) \right)^{\frac{1}{2}},\quad \norm{u(\mb{x})}{N_b, \mathsf{S}_{\partial \Omega}} =  \left( \frac{1}{N_b} \sum_{i=1}^{N_b} u^2(\mb{x}_{\partial \Omega}^{(i)})  \right)^{\frac{1}{2}}.$$
Note that in the definition of $J_N$ we do not take into account the constants $|\Omega|=\int_{\Omega}d\bx$ and $|\partial\Omega|=\int_{\partial\Omega}d\bx$ and the ratio induced by these two constants can be dealt with by choosing $\hat{\gamma}=\frac{\gamma |\partial\Omega|}{|\Omega|}$ such that $J_N(u)$ is a Monte Carlo approximation of $J(u)$ up to a constant scaling factor $|\Omega|$.
We then seek an approximate solution by minimizing the empirical loss \eqref{eq_discrete_loss}, i.e.,
\begin{equation} \label{eq_opt_dlpde}
	\min_{\Theta}J_N(u(\mb{x};\Theta)),
\end{equation}
which can be solved by stochastic gradient-based methods \cite{bottou2018optimization, kingma2017adam}.

Recently, some prior error estimates of neural-network-based methods for solving PDEs are established. Combining the analysis techniques of the least square finite element method \cite{bochev2016least} with the universal approximation property of neural networks \cite{cybenko1989approximation,hornik1989multilayer,hornik1991approximation,leshno1993multilayer}, Shin et. al. propose an abstract framework for the error estimation of physical informed neural networks \cite{shin2020error}. Lu et. al. derive a prior estimate of the generalization error for the deep Ritz method with two-layer neural networks \cite{lu2021priori}. Suppose that $u(\mb{x},\Theta_N^*)$ is the minimizer of the empirical loss $J_N(u(\mb{x}; \Theta))$ and $u(\mb{x};\Theta^*)$ is the minimizer of $J(u(\mb{x};\Theta))$, i.e.,
\begin{equation*}
	\begin{aligned}
		u(\mb{x};\Theta^*) &= \arg \min_{\Theta} J(u(\mb{x};\Theta)), \\
		u(\mb{x};\Theta_N^*) &= \arg \min_{\Theta} J_N(u(\mb{x};\Theta)).
	\end{aligned}
\end{equation*}
We have
\begin{equation}
	u(\mb{x};\Theta_N^*) - u(\mb{x}) = u(\mb{x},\Theta_N^*) - u(\mb{x};\Theta^*) + u(\mb{x};\Theta^*) - u(\mb{x}),
\end{equation}
i.e.,
\begin{equation}
	\mathbb{E} \left( \norm{u(\mb{x};\Theta_N^*) - u(\mb{x})}{\Omega} \right) \leq \mathbb{E} \left( \norm{u(\mb{x},\Theta_N^*) - u(\mb{x};\Theta^*)}{\Omega} \right) + \norm{u(\mb{x};\Theta^*) - u(\mb{x})}{\Omega},
\end{equation}
where $\mathbb{E}$ indicates the expectation and the norm $\norm{\cdot}{\Omega}$ corresponds to the function space $F$ for $u(\mb{x};\Theta)$. The first term describes the statistical error from discretizing the loss functional with the Monte Carlo approximation, and the second term is the approximation error of minimizing the loss functional over the  hypothesis space. The approximation error depends on the capability of neural networks, while the statistical error depends on the definition of $\mathsf{S}_{\Omega}$ and $\mathsf{S}_{\partial \Omega}$. In this work, we focus on how to reduce the statistical error for problem \eqref{eq_opt_dlpde} and our algorithm can also be generalized to other formulations for the neural network approximation of PDEs. For simplicity, we focus on the integration of the residual $r(\mb{x};\Theta)$ and assume that the integral on the boundary is well approximated by a prescribed $\mathsf{S}_{\partial\Omega}$.

\section{Illustration of the statistical error}

We first use function approximation as an example to illustrate the statistical error of the machine learning technique.  Let $\mb{X}\in\mathbb{R}^d$ and $Y\in\mathbb{R}$ subject to a joint distribution $\rho_{\mb{X},\mb{Y}}$. Let $\hat{Y}=m(\mb{X})$ be a model and $y = h(\mb{x})$ be a function to be approximated. We know in the $L_2$ sense the optimal model is
\begin{equation}\label{eqn:fa_0}
	{m}^*(\mb{x}) = \arg \min_{{m}(\mb{x})}\left[L(Y,\hat{Y})=\int (y-{m}(\mb{x}))^2\rho_{\mb{X},Y}(\mb{x},y)d\mb{x}d y\right].    
\end{equation}
In reality, we usually do not know $\rho_{\mb{X},Y}$ and only have a set $\{(\mb{x}^{(i)},y^{(i)})\}_{i=1}^N$ of data  which can be regarded as samples of $\rho_{\mb{X},Y}$. For a certain hypothesis space $W$, we obtain a regression problem
\begin{equation}\label{eqn:fa_1}
	{m}_{\mb{w}^*}(\mb{x})=\arg \min_{{m}_{\mb{w}}\in W}\left[L_N(Y,\hat{Y})=\frac{1}{N}\sum_{i=1}^N(y^{(i)}-{m}_{\mb{w}}(\mb{x}^{(i)}))^2\right],
\end{equation}
where $L_N$ can be regarded as a Monte Carlo approximation of $L$ and the subscript ${\mb{w}}$ indicates the model parameters specified by $W$. If we let $\rho_{\mb{X},Y}(\mb{x},y)=\delta(y-h(\mb{x}))\rho(\mb{x})$, and assume $m(\bx)\in V$ with $V$ being a linear space, we then obtain the continuous least-squares method for function approximation
\begin{equation}\label{eqn:fa_2}
	{m}_V^*(\mb{x})=\arg \min_{{m}(\mb{x})\in V}\left[L_V(Y,\hat{Y})=\int({m}(\mb{x})-h(\mb{x}))^2\rho(\mb{x})d\mb{x}\right],
\end{equation}
where $m_V^*(\mb{x})$ is the best approximation of $h(\mb{x})$ located in $V$ subject to a weighted $L_2$ norm in terms of a probability density function $\rho(\mb{x})$. To approximate $h(\mb{x})$ with a machine learning technique, we consider
\begin{equation}\label{eqn:fa_3}
	m_{\hat{\mb{v}}^*}(\mb{x})=\arg \min_{m_{\hat{\mb{v}}} \in V}\left[L_{V,N}(Y,\hat{Y})=\frac{1}{N}\sum_{i=1}^N(m_{\hat{\mb{v}}} (\mb{x}^{(i)})-h(\mb{x}^{(i)}))^2\right],
\end{equation}
where $\{\mb{x}^{(i)}\}_{i=1}^N$ are samples of PDF $\rho(\mb{x})$, and $L_{V,N}$ is the Monte Carlo approximation of $L_V$. A classical choice for $V$ is the linear space spanned by polynomials, for which we derive the error estimate for $m_{\hat{\mb{v}}^*}(\mb{x})$ as follows.
\begin{lem}\label{lem:fn_stat_err}
	Let $h(\mb{x})\in C(D)$ be a continuous function defined on a compact domain $D\subset\mathbb{R}^d$ and $\rho(\mb{x})>0$ be a PDF on $D$. Let $V=\mathrm{span}\{q_i(\mb{x})\}_{i=1}^n$ with $q_i(\mb{x})$ being orthonormal polynomials in terms of $\rho(\mb{x})$. For any $\delta>0$ and with probability at least $1-2\delta$, we have for a sufficiently large $N$
	\[
	\|m_{\hat{\mb{v}}^*}(\mb{x})-h(\mb{x})\|_\rho\leq C\sqrt{\frac{\ln\delta^{-1}}{N}}+\|m^*_V(\mb{x})-h(\mb{x})\|_\rho,
	\]
	where $C$ is a constant, and $\|\cdot\|_{\rho}$ is the weighted $L_2$ norm in terms of $\rho(\mb{x})$. 
\end{lem}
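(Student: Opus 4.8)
The plan is to reduce the estimate to a finite-dimensional perturbation bound on the coefficient vector of the discrete least-squares solution in the orthonormal basis $\{q_i\}_{i=1}^n$, and then to control that perturbation by elementary concentration inequalities exploiting the compactness of $D$.

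First I would expand $m_{\hat{\mb{v}}^*}(\mb{x})=\sum_{j=1}^n\hat{c}_jq_j(\mb{x})$ and $m_V^*(\mb{x})=\sum_{j=1}^nc_jq_j(\mb{x})$, where $c_j=\int_D hq_j\rho\,d\mb{x}=\mathbb{E}_\rho[hq_j]$ are the exact projection coefficients. The normal equations for \eqref{eqn:fa_3} read $\mathbf{G}\hat{\mb{c}}=\mathbf{d}$, with empirical Gram matrix $\mathbf{G}_{jk}=\frac1N\sum_{i=1}^Nq_j(\mb{x}^{(i)})q_k(\mb{x}^{(i)})$ and data vector $\mathbf{d}_j=\frac1N\sum_{i=1}^Nq_j(\mb{x}^{(i)})h(\mb{x}^{(i)})$, so that $\hat{\mb{c}}=\mathbf{G}^{-1}\mathbf{d}$ as soon as $\mathbf{G}$ is invertible. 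Since $m_V^*$ is the $L_2(\rho)$-orthogonal projection of $h$ onto $V$, the residual $h-m_V^*$ is $\rho$-orthogonal to $V$, while $m_{\hat{\mb{v}}^*}-m_V^*\in V$; the Pythagorean identity together with orthonormality of the $q_j$ gives
\[
\|m_{\hat{\mb{v}}^*}-h\|_\rho^2=\|m_{\hat{\mb{v}}^*}-m_V^*\|_\rho^2+\|m_V^*-h\|_\rho^2=\|\hat{\mb{c}}-\mb{c}\|_2^2+\|m_V^*-h\|_\rho^2,
\]
so the lemma follows once we establish $\|\hat{\mb{c}}-\mb{c}\|_2\le C\sqrt{\ln\delta^{-1}/N}$ on an event of probability at least $1-2\delta$, after which $\sqrt{a^2+b^2}\le a+b$ closes the argument.

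To bound $\|\hat{\mb{c}}-\mb{c}\|_2$ I would write $\hat{\mb{c}}-\mb{c}=\mathbf{G}^{-1}(\mathbf{d}-\mathbf{G}\mb{c})$ and note that, with $e(\mb{x})=h(\mb{x})-m_V^*(\mb{x})$, one has $(\mathbf{d}-\mathbf{G}\mb{c})_j=\frac1N\sum_{i=1}^Nq_j(\mb{x}^{(i)})e(\mb{x}^{(i)})$, whose expectation under $\rho$ is $\int_Dq_je\,\rho\,d\mb{x}=0$ because $e\perp V$. As $D$ is compact, each $q_j$ is a bounded polynomial and $e$ is a bounded continuous function, so the summands are i.i.d.\ zero-mean bounded random variables; Hoeffding's inequality in each of the $n$ coordinates together with a union bound yields $\|\mathbf{d}-\mathbf{G}\mb{c}\|_2\le C_1\sqrt{\ln\delta^{-1}/N}$ with probability at least $1-\delta$, the fixed $n$ and the sup-norms of $q_j$ and $e$ being absorbed into $C_1$. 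Similarly, since $\mathbb{E}_\rho[q_jq_k]=\delta_{jk}$, the entries of $\mathbf{G}-\mathbf{I}$ are zero-mean averages of bounded variables, so with probability at least $1-\delta$ one has $\|\mathbf{G}-\mathbf{I}\|_2\le\|\mathbf{G}-\mathbf{I}\|_F\le C_2\sqrt{\ln\delta^{-1}/N}$; for $N$ large enough the right-hand side is $\le\tfrac12$, so $\mathbf{G}$ is invertible with $\|\mathbf{G}^{-1}\|_2\le2$. On the intersection of the two events, of probability at least $1-2\delta$, we then obtain $\|\hat{\mb{c}}-\mb{c}\|_2\le2C_1\sqrt{\ln\delta^{-1}/N}$, which is the required bound.

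The routine but slightly delicate part is the pair of concentration steps: one has to verify the uniform boundedness of the $q_j$ and of $e=h-m_V^*$ on the compact domain so Hoeffding applies, and to keep track of how the dimension $n$ enters the constant (it stays inside $C$ because $n$ is fixed). The role of the ``sufficiently large $N$'' hypothesis is precisely to force the empirical Gram matrix to be invertible and uniformly well conditioned, i.e.\ $\|\mathbf{G}^{-1}\|_2\le2$. The conceptual core of the argument is the Pythagorean split, which confines the statistical fluctuation to the finite-dimensional coefficient error and exhibits it as driven by a mean-zero average, hence of order $N^{-1/2}$ rather than $O(1)$.
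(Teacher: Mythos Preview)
Your proposal is correct and follows essentially the same argument as the paper: both set up the normal equations in the orthonormal basis, use Hoeffding on the compact domain to control the deviation of the empirical Gram matrix from the identity and of the empirical right-hand side from its mean, invoke a Neumann-type bound on $(\mathbf{I}+\delta\mathbf{A})^{-1}$ for sufficiently large $N$, and finish with the Pythagorean split. The only cosmetic difference is that you bound the combined quantity $\mathbf{d}-\mathbf{G}\mb{c}=\frac1N\sum_i q(\mb{x}^{(i)})e(\mb{x}^{(i)})$ directly as a mean-zero average, whereas the paper bounds $\delta\mb{b}$ and $\delta\mathbf{A}\,\mb{v}^*$ separately and combines them by the triangle inequality; this is the same estimate organized slightly differently.
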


The first term on the right-hand is the statistical error due to the random samples for the approximation of $L_{V}$ (see the proof in the appendix for more details) and its existence does not depend on the choice of $V$. When $N$ goes to infinity, the statistical error goes to zero and only the approximation error is left. In other words, when applying a machine learning technique to function approximation, we need to pay attention to both the hypothesis space $W$ and the choice of random samples $\{\mb{x}^{(i)}\}_{i=1}^N$, i.e., the training set, to obtain a trade-off between the statistical error and the approximation error. For low-dimensional problems, classical method such as finite element methods avoid the statistical error by using Gauss quadrature rules, which implies that machine learning techniques are in general less efficient than classical methods due to the existence of statistical error. On the other hand, for high-dimensional problems, classical methods may not be able to obtain a relatively small approximation error due to the curse of dimensionality and machine learning techniques may perform better by using a capable hypothesis space such as neural networks and an affordable sample size for a relatively small statistical error.  
A more general estimate about the statistical error needs the Rademacher complexity of the hypothesis space. In this work, we are interested in the reduction of the statistical error instead of its bound for a certain set of random samples.

\section{Deep adaptive sampling method}\label{section_DAS}

We now focus on the reduction of the statistical error when machine learning techniques are used to approximate PDEs. Our deep adaptive sampling method, i.e., the DAS method, will be established from the viewpoint of variance reduction since the statistical error is induced by the Monte Carlo approximation of the loss. Consider the term $J_r(u)$ in equation \eqref{eq_resloss}. It is easy to see that if $r^2(\mb{x})$ is a smooth function with a good regularity, the most effective way to reduce the error of $J_N(u)$ is to increase the sample size $N$. However, if  there exists low regularity, the situation might be different. For example, if the residual is strongly localized, the scenario can be regarded as a rare event. Assume that the residual $r^2(\mb{x})$ in equation \eqref{eq_resloss} has a similar behavior to an indicator function $1_I(\mb{x})$ where $I \subset \Omega$ and is much smaller than $\Omega$, i.e.,
\[
\zeta = \int_{\Omega}1_I(\mb{x})d\mb{x}\approx\int_{\Omega}r^2(\mb{x})d\mb{x}\ll 1,
\]
For simplicity, we assume that $|\Omega|=1$ when presenting the algorithm. To improve the approximation in $I$, we need to compute $\zeta$ accurately. Consider a Monte Carlo estimator of $\zeta$ in terms of uniform samples
\[
\hat{P}_{\mathsf{MC}}=\frac{1}{N}\sum_{i=1}^N 1_I(\mb{x}^{(i)}).
\]
The relative error of $\hat{P}_{\mathsf{MC}}$ is 
\[
\frac{\mathrm{Var}^{1/2}(\hat{P}_{\mathsf{MC}})}{\zeta}=N^{-1/2}((1-\zeta)/\zeta)^{1/2}\approx(\zeta N)^{-1/2}.
\]
We then need a sample size of $\mathit{O}(1/\zeta)$ to obtain a relative error of $\mathit{O}(1)$. This implies that a certain samples size $N$ quickly becomes less effective if the residual is strongly localized and such a problem can be worsen in the approximation of high-dimensional PDEs. To reduce the relative error, we need to choose more effective random samples instead of uniform samples. 

\subsection{Some ideas on variance reduction}\label{sec:is_idea}
We outline our basic ideas on variance reduction in this section and more details about the algorithm will be presented later. We first consider the importance sampling technique. For simplicity, we only consider $J_r(u(\mb{x};\Theta))$ in equation \eqref{eq_resloss}, which is the loss induced by the residual. We have
\begin{equation} \label{eq_imploss}
	\begin{aligned}
		J_r \left( u(\mb{x};\Theta) \right)=\mathbb{E}[r^2]
		= \int_{\Omega}  r^2(\mb{x};\Theta) d\mb{x}
		= \int_{\Omega}  \frac{r^2(\mb{x};\Theta)}{p(\mb{x})} p(\mb{x})d\mb{x} 
		\approx \frac{1}{N_r} \sum\limits_{i=1}^{N_r} 	\frac{r^2(\mb{x}_{\Omega}^{(i)};\Theta)}{p(\mb{x}_{\Omega}^{(i)})},
	\end{aligned}
\end{equation} 
where the set $\{\mb{x}_{\Omega}^{(i)}\}_{i=1}^{N_r}$ is generated with respect to the probability density function (PDF) $p(\mb{x})$ instead of a uniform distribution as in equation \eqref{eq_discrete_loss}. If the variance of $r^2(\mb{X})p^{-1}(\mb{X})$ in terms of $p(\mb{x})$ is smaller than the variance of $r^2(\mb{X})$ in terms of the uniform distribution, the accuracy of the Monte Carlo approximation will be improved for a fixed sample size $N_r$. The optimal choice for $p(\mb{x})$ is
\begin{equation}
	p^*(\mb{x}) = \frac{r^2(\mb{x};\Theta)}{\mu} ,
\end{equation}
where $\mu=\int_{\Omega}r^2(\mb{x};\Theta)d\mb{x}$. Although the optimal choice is useless in practice since $\mu$ is the quantity to be computed, it suggests that variance reduction can be achieved if $p(\bx)$ is close to the residual-induced distribution. 
\begin{lem}\label{lem:kl_is}
	Assume that $|\Omega|=1$ and $p(\bx)$ is a PDF satisfying
	\[
	D_{\mathsf{KL}}(p\|p^*)\leq \varepsilon<\infty,
	\]
	where $D_{\mathsf{KL}}$ indicates the Kullback-Leibler divergence. For any $0<a<\infty$, we have
	\begin{equation}\label{eqn:int_err_KL}
		\mathbb{E}\left|Q_p[r^2]-\mathbb{E}[r^2]\right|\leq aN_r^{-1/2}+2\|r^2/p\|_{p}\sqrt{\mathbb{P}(|r^2/p-\mu|>a;p)},
	\end{equation}
	where
	\[
	Q_p(r^2)=\frac{1}{N_r}\sum_{i=1}^{N_r}\frac{r^2(\mb{X}^{(i)})}{p(\mb{X}^{(i)})},
	\]
	and $\mb{X}^{(i)}\sim p(\mb{x})$ are i.i.d. random variables. $\|\cdot\|_{p}$ is the weighted $L_2$ norm in terms of $p$. The tail probability can be bounded as 
	\[
	\mathbb{P}(|r^2/p-\mu|>a;p)\leq \frac{\mu(2\varepsilon)^{1/2}}{a}.
	\]
\end{lem}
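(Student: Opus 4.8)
The plan is to combine the unbiasedness of the importance-sampling estimator with a truncation at level $a$, and then to bound the resulting tail via Pinsker's inequality. Write $g(\mb{x})=r^2(\mb{x})/p(\mb{x})$; then $\mathbb{E}_p[g(\mb{X})]=\int_\Omega r^2\,d\mb{x}=\mu=\mathbb{E}[r^2]$, so that $Q_p[r^2]-\mathbb{E}[r^2]=\frac{1}{N_r}\sum_{i=1}^{N_r}(g(\mb{X}^{(i)})-\mu)$. I assume $\|r^2/p\|_p<\infty$, since otherwise \eqref{eqn:int_err_KL} holds trivially.

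First I would split the centered integrand at the threshold: set $\phi_a=(g-\mu)\mathbf{1}_{\{|g-\mu|\le a\}}$ and $\psi_a=(g-\mu)\mathbf{1}_{\{|g-\mu|>a\}}$, so $g-\mu=\phi_a+\psi_a$ with $|\phi_a|\le a$, and by the triangle inequality
\[
\mathbb{E}\bigl|Q_p[r^2]-\mathbb{E}[r^2]\bigr|\le\mathbb{E}\Bigl|\tfrac{1}{N_r}\textstyle\sum_i\phi_a(\mb{X}^{(i)})\Bigr|+\mathbb{E}\Bigl|\tfrac{1}{N_r}\textstyle\sum_i\psi_a(\mb{X}^{(i)})\Bigr|.
\]
The second term is at most $\mathbb{E}_p|\psi_a|=\mathbb{E}_p[|g-\mu|\mathbf{1}_{\{|g-\mu|>a\}}]$. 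For the first term, the point that needs care is that $\phi_a$ is not centered; but since $\mathbb{E}_p[g-\mu]=0$ we have $\mathbb{E}_p[\phi_a]=-\mathbb{E}_p[\psi_a]$, hence $|\mathbb{E}_p[\phi_a]|\le\mathbb{E}_p|\psi_a|$, and centering $\phi_a$ first, then applying Jensen's inequality and independence of the $\mb{X}^{(i)}$,
\[
\mathbb{E}\Bigl|\tfrac{1}{N_r}\textstyle\sum_i\phi_a(\mb{X}^{(i)})\Bigr|\le\sqrt{\mathrm{Var}_p(\phi_a)/N_r}+|\mathbb{E}_p[\phi_a]|\le aN_r^{-1/2}+\mathbb{E}_p|\psi_a|,
\]
using $\mathrm{Var}_p(\phi_a)\le\mathbb{E}_p[\phi_a^2]\le a^2$. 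Thus the left side is bounded by $aN_r^{-1/2}+2\,\mathbb{E}_p[|g-\mu|\mathbf{1}_{\{|g-\mu|>a\}}]$. A Cauchy--Schwarz step then gives $\mathbb{E}_p[|g-\mu|\mathbf{1}_{\{|g-\mu|>a\}}]\le\|g-\mu\|_p\,\mathbb{P}(|g-\mu|>a;p)^{1/2}\le\|r^2/p\|_p\,\mathbb{P}(|r^2/p-\mu|>a;p)^{1/2}$, where I used $\|g-\mu\|_p^2=\mathbb{E}_p[g^2]-\mu^2\le\mathbb{E}_p[g^2]=\|r^2/p\|_p^2$. This is exactly \eqref{eqn:int_err_KL}.

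For the tail bound, Markov's inequality gives $\mathbb{P}(|r^2/p-\mu|>a;p)\le a^{-1}\,\mathbb{E}_p|r^2/p-\mu|$. Since $p^*=r^2/\mu$, we have $r^2/p-\mu=\mu(p^*/p-1)$, so $\mathbb{E}_p|r^2/p-\mu|=\mu\int_\Omega|p^*-p|\,d\mb{x}=\mu\|p-p^*\|_{L^1(\Omega)}$; Pinsker's inequality then yields $\|p-p^*\|_{L^1(\Omega)}\le\sqrt{2D_{\mathsf{KL}}(p\|p^*)}\le(2\varepsilon)^{1/2}$, whence $\mathbb{P}(|r^2/p-\mu|>a;p)\le\mu(2\varepsilon)^{1/2}/a$.

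I expect the main obstacle to be the first (truncated) term: one must observe that $\phi_a$ fails to be mean-zero and that its bias equals the negative of the tail mean of $g-\mu$, which is what produces the factor $2$ in front of the tail term in \eqref{eqn:int_err_KL}. The remaining ingredients --- Jensen, Cauchy--Schwarz, Markov, and Pinsker --- are routine.
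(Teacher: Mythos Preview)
Your proof is correct and follows essentially the same route as the paper: a truncation of the importance weight at level $a$, a variance bound for the truncated part, Cauchy--Schwarz for the tail contribution, and Markov plus Pinsker for the tail probability. The only notable difference is in the choice of truncation. The paper sets $\hat r=r\,\mathbf 1_{\{|r^2/p-\mu|\le a\}}$ and splits $|Q_p[r^2]-\mathbb{E}[r^2]|$ into three pieces $I_1+I_2+I_3$, bounding $I_2$ via $\mathrm{Var}_p(\hat r^2/p)$ and the range inequality $\mathrm{Var}(Y)\le(\beta-\alpha)^2/4$; the two outer pieces each contribute $\|r^2/p\|_p\sqrt{\mathbb{P}(\cdot)}$, giving the factor $2$. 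You instead truncate $g-\mu$ directly, so $|\phi_a|\le a$ yields $\mathrm{Var}_p(\phi_a)\le a^2$ immediately, and the factor $2$ emerges from the bias $\mathbb{E}_p[\phi_a]=-\mathbb{E}_p[\psi_a]$ being absorbed into the tail term. Your variant is slightly cleaner here, since the paper's $\hat r^2/p$ also takes the value $0$ on the tail set, which in principle widens its range beyond $[\mu-a,\mu+a]$; your formulation avoids that subtlety.
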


When $p(\mb{x})$ is close to $p^*(\mb{x})$, the ratio $r^2(\mb{x})/p(\mb{x})$ should be close to $\mu$. The second term on the right-hand side of inequality \eqref{eqn:int_err_KL} is related to the tail probability $\mathbb{P}(|r^2/p-\mu|>a;p)$ in terms of $p(\mb{x})$. It is seen that if the tail probability is small enough, a variance reduction can be achieved by choosing a small $a$. 	

Another option for variance reduction is to relax the definition of $J_r(u)$ as:
\begin{equation}\label{eqn:J_r_p}
	J_{r,p}(u(\mb{x};\Theta))=\int_{\Omega}r^2(\mb{x};\Theta)p(\mb{x})d\mb{x}\approx\frac{1}{N_r}\sum_{i=1}^{N_r} r^2(\mb{x}_{\Omega}^{(i)};\Theta),
\end{equation}
where the set $\{\mb{x}_{\Omega}^{(i)}\}_{i=1}^{N_r}$ is sampled from the PDF $p(\mb{x})$ and $p(\mb{x})>0$ on $\Omega$.  It is easy to see that the minimizer of $J_{r,p}(u(\mb{x};\Theta))$ is also the solution of problem \eqref{eq_pde} when the boundary conditions are satisfied. To reduce the error induced by the Monte Carlo approximation, we may adjust $p(\mb{x})$ such that the residual $r^2(\mb{x};\Theta)$ does not vary dramatically. This is similar to the classical adaptive finite element method, where mesh refinement/coarsening is supposed to make the approximation error nearly uniform. For our case, we may sample a PDF that is close to the residual-induced  distribution and add more samples from the region of large residual into the training set. 

\subsection{PDF approximation and sample generation}\label{sec:sample_generation}
Two options for variance reduction are considered in the previous section. To make both ideas practical, the key issue is to generate samples efficiently from a PDF $p(\bx)\approx\mu^{-1}r^2(\bx;\Theta)$ for a fixed $\Theta$. The first option is based on importance sampling, meaning that an explicit PDF model $p(\mb{x})$ is needed, The second option is based on refinement of the training set, which only needs samples from the region of large residual and does not need the likelihood of the samples. The second option has been employed in the recent literature to improve the neural network approximation, which is either ad hoc \cite{yu2022gradient} (only for low-dimensional problems) or based on traditional sampling strategies such as MCMC \cite{gao2021active}. To the best of our knowledge, there does not exist a generic algorithm that can be effectively adapted to both options. We intend to fill this gap in this work. 
It is seen from Lemma \ref{lem:kl_is} that the tail probability $\mathbb{P}(|r^2/p-\mu|>a;p)$ should be small enough for the effectiveness of $p(\mb{x})$, which means that $p(\mb{x})$ must be close enough to the distribution induced by $r^2(\mb{x})$. However, the approximation of PDF is a challenging task especially in high-dimensional spaces. Classical explicit PDF models such as the exponential family of distributions and Gaussian mixture models are in general not sufficient as an approximator for the PDF induced by $r^2(\mb{x})$. To alleviate this difficulty, we resort to deep generative modes. In particular, we employ a recently developed deep generative model called KRnet for both probability approximation and sample generation \cite{tang2021adaptive,tangwandensity2020}. KRnet is one type of normalizing flow \cite{kobyzev2020normalizing}, which provides an invertible transport map between a prior distribution and the target distribution. Unlike other types of deep generative models such as GAN \cite{goodfellow2014generative, arjovsky2017wasserstein, gulrajani2017improved} and VAE \cite{kingma2014auto}, normalizing flow provides an explicit likelihood. KRnet can be used to address both options for variance reduction while other deep generative models may also be employed if only the second option for variance reduction is considered. However, one computational issue faced by all deep generative models is that the distribution induced by $r^2(\mb{x})$ is usually defined on a compact domain while deep generative models are in general defined on the whole space. We subsequently address this issue without going into details about the structure of KRnet. 

Let $\mb{X} \in \xs{R}^d$ be a random vector associated with a given data set, and its PDF 
is denoted by $p_{\mb{X}}(\mb{x})$. The target is to estimate $p_{\mb{X}}(\bx)$ from data or to generate samples that are consistent with a given $p_{\mb{X}}(\mb{x})$. Let $\mb{Z} \in \xs{R}^{d}$ be a random vector associated with a PDF $p_{\mb{Z}}(\mb{z})$, where $p_{\mb{Z}}(\mb{z})$ is a prior distribution (e.g., Gaussian distributions). The \comm{goal of} flow-based generative modeling is to seek an invertible mapping $\mb{z} = f(\mb{x})$ \comm{where $f(\cdot)$ is a bijection: $f:\mb{x} \mapsto \mb{z}$} \cite{dinh2016density}. By the change of variables, we have the PDF of $\mb{X}=f^{-1}(\mb{Z})$ as 
\begin{equation}\label{eqn:pdf_model}
	p_{\mb{X}}(\mb{x})=p_{\mb{Z}}(f(\mb{x})) \left |\det\nabla_{\mb{x}} f \right|.
\end{equation}
Once the prior distribution $p_{\mb{Z}}(\mb{z})$ is specified, equation \eqref{eqn:pdf_model} provides an explicit density model for $\mb{X}$. The inverse of $f(\cdot)$ provides a convenient way to sample $\mb{X}$ as $\mb{X}=f^{-1}(\mb{Z})$. 
The basic idea of KRnet is to define the structure of $f(\mb{x})$ in terms of the Knothe-Rosenblatt rearrangement. 
Let $\mu_{\mb{Z}}$ and $\mu_{\mb{X}}$ be the probability measures of two random variables $\mb{Z},\mb{X}\in\xs{R}^d$ respectively. A mapping $\mathcal{T}$: $\mb{Z} \mapsto \mb{X}$ is called a transport map such that $\mathcal{T}_{\#} \mu_{\mb{Z}} = \mu_{\mb{X}}$, where $\mathcal{T}_{\#} \mu_{\mb{Z}}$ is the push-forward of $\mu_{\mb{Z}}$ such that $\mu_{\mb{X}}(B) = \mu_{\mb{Z}}(\mathcal{T}^{-1}(B))$ for every Borel set $B$ \cite{carlier2010knothe}. The transport map $\mathcal{T}$ given by the Knothe-Rosenblatt rearrangement \cite{carlier2010knothe, santambrogio2015optimal} has a lower-triangular structure 
\begin{equation}
	\mb{z} = \mathcal{T}^{-1}(\mb{x}) = \left[ 
	\begin{array}{l}
		\mathcal{T}_1(x_1) \\
		\mathcal{T}_2(x_1, x_2) \\
		\vdots \\
		\mathcal{T}_{d}(x_1, \ldots, x_d)
	\end{array}
	\right].
\end{equation}
Simply speaking, KRnet integrates the triangular structure of the K-R rearrangement into the definition of the invertible transport map. More details can be found in \cite{tangwandensity2020,tang2021adaptive,wan2021augmented}. 

Let $f_{\mathsf{KRnet}}(\cdot;\Theta_f)$ indicate the invertible transport map induced by KRnet, where $\Theta_f$ includes the model parameters. An explicit PDF model $p_{\mathsf{KRnet}}(\bx;\Theta_f)$ can be obtained by letting $f=f_{\mathsf{KRnet}}$ in equation \eqref{eqn:pdf_model}, i.e.,
	\begin{equation}\label{eq_krpdf}
	p_{\mathsf{KRnet}}(\mb{x};\Theta_f)=p_{\mb{Z}}(f_{\mathsf{KRnet}}(\mb{x})) \left |\det\nabla_{\mb{x}} f_{\mathsf{KRnet}} \right|.
\end{equation}
The samples of $p_{\mathsf{KRnet}}(\bx;\Theta_f)$ is given as $\mb{X}=f^{-1}_{\mathsf{KRnet}}(\mb{Z})$ by sampling $\mb{Z}$. A common choice for the distribution of $\mb{Z}$ is the standard Gaussian distribution. Depending on the prior knowledge of the problem, a more general model such as Gaussian mixture model can also be used as the prior distribution. The KRnet $f_{\mathsf{KRnet}}$ does not have any constraint on the range of the mapped data, meaning that both $\mb{X}$ and $\mb{Z}$ are defined on $\mathbb{R}^d$. Let $\mb{Z}$ be Gaussian. Due to the invertibility, $|\det\nabla_{\mb{x}}f_{\mathsf{KRnet}}|>0$ for any $\mb{x}\in\mathbb{R}^d$, which implies that $p_{\mathsf{KRnet}}(\mb{x};\Theta_f)>0$ for any $\mb{x}\in\mathbb{R}^d$. So $p_{\mathsf{KRnet}}(\mb{x};\Theta_f)$ is not consistent with the distribution induced by $r^2(\mb{x})$, which is equal to zero on $\mathbb{R}^d\backslash \Omega$. To deal with this issue, we propose the following strategy. 

Without loss of generality, we can assume that $\Omega=[-1/2,1/2]^d$. Let $B=(-(1/2+\delta/2),1/2+\delta/2)^d$ with  $0<\delta<\infty$ such that $\Omega \subset B$. For each dimension of $\bx$, we define the following logarithmic mapping
\begin{equation*}
	y=\ell(x)=\frac{s}{2}\log\frac{2x+(1+\delta)}{(1+\delta)-2x},\quad x=\ell^{-1}(y)=\frac{1+\delta}{2}\frac{e^{2y/s}-1}{e^{2y/s}+1},
\end{equation*} 
with $s>0$ being a scale parameter, which defines a one-to-one correspondence between $x \in (-(1/2+\delta/2),1/2+\delta/2)$ and $y \in (-\infty,+\infty)$. Let $\boldsymbol{\ell}(\bx):B \mapsto \mathbb{R}^d$ be a $d$-dimensional mapping such that 
\[
\ell_i(x_i)=\ell(x_i),\quad i=1,\ldots,d.
\]
Then the following invertible mapping
\begin{equation}\label{eqn_boundKR}
	\mb{z} = f_{\mathsf{KRnet}} \circ\boldsymbol{\ell}(\bx)
\end{equation}
defines a PDF
\begin{equation} \label{eqn_KRres}
	\hat{p}_{\mathsf{KRnet}}(\bx;\Theta_f) =p_{\mathsf{KRnet}}(\boldsymbol{\ell}(\bx);\Theta_f)|\nabla_{\bx}\boldsymbol{\ell}(\bx)|,
\end{equation}
where the support of $\hat{p}_{\mathsf{KRnet}}(\bx;\Theta_f)$ is $B$.

We now consider a modification of $r^2(\bx;\Theta)$. Define a cutoff function as
\begin{equation*}
	h(\bx)=\left\{
	\begin{array}{rl}
		1,&\bx \in \Omega,\\
		\prod_{i=1}^dh_\delta(x_i),&\bx \in B\backslash \Omega,
	\end{array}
	\right.
\end{equation*}
where $h_{\delta}(x)$ is a piecewise linear function 
\begin{equation*}
	h_{\delta}(x)=\left\{
	\begin{array}{rl}
		1,&x\in[-1/2,1/2],\\
		\delta^{-1}(2x+1+\delta),&x\in(-1/2-\delta/2,-1/2),\\
		\delta^{-1}(1+\delta-2x),&x\in(1/2,1/2+\delta/2),\\
		0,&x\in (-\infty,-1/2-\delta/2]\cup[1/2+\delta/2,\infty).
	\end{array}
	\right.
\end{equation*} 
We consider a modified PDF for any $\Theta$
\begin{equation}
	\hat{r}_{\bX}(\bx) \propto r^2(\bx;\Theta)h(\bx).
\end{equation}
Note that both $\hat{r}_{\bX}(\bx)$ and $\hat{p}_{\mathsf{KRnet}}(\bx;\Theta_f)$ have the support $B$. We then solve the following optimization problem
\begin{equation}\label{eqn:KL_opt}
	\Theta_f^*=\arg \min_{\Theta_f} D_{\mathsf{KL}}(\hat{r}_{\bX}(\bx)\|\hat{p}_{\mathsf{KRnet}}(\bx;\Theta_f)),
\end{equation}
where $D_{\mathsf{KL}}(\cdot\|\cdot)$ indicates the Kullback-Leibler (KL) divergence between two distributions. We finally use 
\begin{equation}
	p_{\bX}(\bx)\propto\hat{p}_{\mathsf{KRnet}}(\bx;\Theta_f^*)1_{\Omega}(\bx)
\end{equation}
as an approximation of the PDF induced by $r^2(\bx;\Theta)$. If $\delta\ll 1$, most the samples $\boldsymbol{\ell}^{-1}\circ f^{-1}_{\mathsf{KRnet}}(\mb{z}^{(i)};\Theta_f^*)$ will be located in $\Omega$. Since $\hat{r}_{\bX}(\bx)\propto r^2(\bx;\Theta)$ on $\Omega$, $p_{\bX}(\bx)$ approximates the $r^2(\bx;\Theta)$-induced PDF well when $\delta$ is small. In our numerical experiments, we set $\delta = 0.01$ and $s = 2$.

We now look at the approximation of $\Theta_f^*$. The KL divergence in the optimization problem \eqref{eqn:KL_opt} can be written as
\begin{equation}\label{eqn:KL_r2p}
	D_{\mathsf{KL}}(\hat{r}_{\bX}(\bx)\|\hat{p}_{\mathsf{KRnet}}(\bx;\Theta_f))=
	\int_{B} \hat{r}_{\bX} \log \hat{r}_{\bX}d\mb{x} - \int_{B} \hat{r}_{\bX} \log \hat{p}_{\mathsf{KRnet}} d\mb{x}.
\end{equation}
The first term on the right-hand side corresponds to the differential entropy of $\hat{r}_{\bX}$, which does not affect the optimization with respect to $\Theta_f$. So minimizing the KL divergence is equivalent to minimizing the cross entropy between $\hat{r}_{\bX}$ and $\hat{p}_{\mathsf{KRnet}}$ \cite{de2005tutorial, rubinstein2013cross}:
\begin{equation}
	H(\hat{r}_{\bX},\hat{p}_{\mathsf{KRnet}})=- \int_{B} \hat{r}_{\bX} \log \hat{p}_{\mathsf{KRnet}} d\mb{x}.
\end{equation}
Since the samples from $\hat{r}_{\bX}$ are not available, we approximate the cross entropy using the importance sampling technique:
\begin{equation}\label{eqn:ce_approx}
	H(\hat{r}_{\bX},\hat{p}_{\mathsf{KRnet}}) \approx -\frac{1}{N_r}\sum_{i=1}^{N_r} \frac{\hat{r}_{\bX}(\mb{x}_{ B}^{(i)})}{\hat{p}_{\mathsf{KRnet}}(\mb{x}_{B}^{(i)};\hat{\Theta}_f)}\log \hat{p}_{\mathsf{KRnet}}(\mb{x}_{B}^{(i)};\Theta_f),
\end{equation}
where $\hat{p}(\mb{x};\hat{\Theta}_f)$ is a PDF model with known parameter $\hat{\Theta}_f$ and its samples $\{\mb{x}_{B}^{(i)}\}_{i=1}^{N_r}$ can be generated efficiently as 
\begin{equation}\label{eqn:z_to_x}
	\mb{x}_B^{(i)}=\boldsymbol{\ell}^{-1} \circ f^{-1}_{\mathsf{KRnet}}(\mb{z}^{(i)};\hat{\Theta}_f)
\end{equation}
with $\mb{z}^{(i)}$ being sampled from the prior distribution. We then minimize the discretized cross entropy \eqref{eqn:ce_approx} to obtain an approximation of $\Theta_f^*$. The choice for $\hat{\Theta}_f$ will be specified in section \ref{sec_das} when the adaptive sampling method is defined.

\begin{remark}\label{rmk:kl}
	An alternative approach for the approximation of $\hat{r}_{\bX}(\bx)$ is to minimize the following KL divergence:
	\begin{equation}\label{eqn:KL_p2r}
		D_{\mathsf{KL}}(\hat{p}_{\mathsf{KRnet}}(\bx;\Theta_f)\|\hat{r}_{\bX}(\bx))=\int_{B} \hat{p}_{\mathsf{KRnet}} \log \hat{p}_{\mathsf{KRnet}}d\mb{x} - \int_{B}  \hat{p}_{\mathsf{KRnet}}\log \hat{r}_{\bX} d\mb{x},
	\end{equation}
	which can be approximated by samples from $\hat{p}_{\mathsf{KRnet}}(\bx;\Theta_f)$. Note that the KL divergence is asymmetric. Minimizing the KL divergence \eqref{eqn:KL_r2p} is not equivalent to the minimization of the KL divergence \eqref{eqn:KL_p2r}, although both minimizers will be achieved at $\hat{p}_{\mathsf{KRnet}}(\bx;\Theta_f)=\hat{r}_{\bX}(\bx)$ if $\hat{r}_{\bX}$ can be reached exactly by a certain parameter $\Theta_f$.
\end{remark}

\subsection{Adaptive sampling procedure}\label{sec_das}
We are now ready to present our algorithms. In this work, we mainly focus on the adaptivity of $\mathsf{S}_{\Omega}$ for simplicity. The key step of our adaptivity strategy is to improve the effectiveness of the random samples in the training set $\mathsf{S}_{\Omega}$, and we provide two algorithms corresponding to the two options discussed in section \ref{sec:is_idea}.
\begin{enumerate}
	\item[I.] We replace all the collocation points in the current training set using new samples from the probability measure for importance sampling. This corresponds to equation \eqref{eq_imploss}.
	\item[II.] We gradually add more collocation points to the current training set. This corresponds to equation \eqref{eqn:J_r_p}, where the new samples are mainly from the region of large residual.
\end{enumerate}

We first present strategy I.  Let $\mathsf{S}_{\Omega, 0} = \{\mb{x}_{\Omega, 0}^{(i)} \}_{i=1}^{N_r}$ and $\mathsf{S}_{\partial\Omega,0}$ be two sets of collocation points that are uniformly sampled from $\Omega$ and $\partial \Omega$ respectively. Using $\mathsf{S}_{\Omega,0}$ and $\mathsf{S}_{\partial\Omega,0}$, we minimize the empirical loss \eqref{eq_discrete_loss} to obtain $u(\mb{x};\Theta_N^{*,(1)})$. With $u(\mb{x};\Theta_N^{*,(1)})$, we minimize the cross entropy \eqref{eqn:ce_approx} to get $\hat{p}_{\mathsf{KRnet}}(\bx;\Theta_f^{*, (1)})$, where we simply use uniform samples for importance sampling. To refine the training set, a new set $\mathsf{S}_{\Omega, 1} = \{\mb{x}_{\Omega, 1}^{(i)} \}_{i=1}^{N_r}$ is generated by $\boldsymbol{\ell}^{-1} \circ f^{-1}_{\mathsf{KRnet}}(\mb{z}^{(i)};\Theta_f^{*,(1)})$ (see equation \eqref{eqn_boundKR}). Then we continue to update the approximate solution $u(\mb{x};\Theta_N^{*,(1)})$ using $\mathsf{S}_{\Omega, 1}$ as the training set. In general, we use $\mathsf{S}_{\Omega, k} = \{\mb{x}_{\Omega, k}^{(i)} \}_{i=1}^{N_r}$ to obtain $u(\mb{x};\Theta_N^{*,(k+1)})$ as
\begin{equation*}
	\Theta_N^{*,(k+1)}= \arg \min_{\Theta} J^{\mathsf{IS}}_{N}(u(\mb{x};\Theta)),
\end{equation*}
where $u(\bx;\Theta)$ is initialized as $u(\bx;\Theta_N^{*,(k)})$ and $J^{\mathsf{IS}}_N$ is defined as
\begin{equation} \label{eqn_ISdiscreteloss}
	J^{\mathsf{IS}}_N(u(\mb{x};\Theta))=\frac{1}{N_r} \sum\limits_{i=1}^{N_r} \frac{r^2(\mb{x}_{\Omega,k}^{(i)};\Theta)}{\hat{p}_{\mathsf{KRnet}}(\mb{x}_{\Omega, k}^{(i)};\Theta_f^{*,(k)})} + \frac{1}{N_b} \sum\limits_{i = 1}^{N_b} b^2(\mb{x}_{\partial \Omega,k}^{(i)};\Theta).  
\end{equation}
Starting with $\hat{p}_{\mathsf{KRnet}}(\bx;\Theta_f^{*,(k)})$, the density model $\hat{p}_{\mathsf{KRnet}}(\bx;\Theta_f)$ is updated as 
\begin{equation}\label{eqn_ISminKR}
	\Theta_f^{*,(k+1)} = \arg \min_{\Theta_f} -\frac{1}{N_r}\sum_{i=1}^{N_r} \frac{r^2(\mb{x}_{B, k}^{(i)};\Theta_N^{*,(k+1)})h(\mb{x}_{B, k}^{(i)})}{\hat{p}_{\mathsf{KRnet}}(\mb{x}_{B, k}^{(i)};\Theta_f^{*,(k)})} \log \hat{p}_{\mathsf{KRnet}}(\mb{x}_{B, k}^{(i)};\Theta_f),
\end{equation}
where we let $\hat{\Theta}_f=\Theta_f^{*,(k)}$ in equation \eqref{eqn:ce_approx}, i.e., the previous PDF model $\hat{p}_{\mathsf{KRnet}}(\bx;\Theta_f^{*,(k)})$ is used for importance sampling when computing the cross entropy. A new set $\mathsf{S}_{\Omega, k+1} = \{\mb{x}_{\Omega, k+1}^{(i)} \}_{i=1}^{N_r}$ of collocation points is then generated. As detailed in section \ref{sec:sample_generation}, the support of data points generated by KRnet is $B = (-(1/2+\delta/2), 1/2+\delta/2)^d$, while the computation domain is $\Omega = [-1/2,1/2]^d$. So we need to deal with the collocation points located in $B\backslash\Omega$. Instead of neglecting these points, we project them onto $\partial\Omega$. We define an entry-wise projection operator $\mathcal{P}(\mb{x}): B \mapsto \Omega$ as
\begin{equation} \label{eqn_proj}
	\mathcal{P}(x_i) = \begin{cases} -1/2, \quad  \mathrm{if} \  x_i < -1/2 ,\\
		x_i,  \quad \mathrm{if}  \  -1/2 \leq x_i \leq 1/2 , \\
		1/2, \quad  \mathrm{if} \ x_i > 1/2.
	\end{cases} i = 1, \ldots, d.
\end{equation}
For a sequence of i.i.d. samples $\mb{z}^{(j)}$ generated from the standard Gaussian with $j=1,2\ldots$, we compute $\mb{x}_B^{(j)}=\boldsymbol{\ell}^{-1} \circ f^{-1}_{\mathsf{KRnet}}(\mb{z}^{(j)})$. if $\mb{x}_B^{(j)}=\mathcal{P}(\mb{x}_B^{(j)})$, we assign $\mb{x}_B^{(j)}$ to $\mathsf{S}_{\Omega,k+1}$; otherwise, we add $\mathcal{P}(\mb{x}^{(j)}_B)$ to $\mathsf{S}_{\partial\Omega,k}$.
The updated training set $\mathsf{S}_{\Omega,k+1}$ and $\mathsf{S}_{\partial\Omega,k+1}$ will be used for the next training stage. This procedure is repeated until the stopping criterion is satisfied (see Algorithm \ref{alg_das_r}). Since the collocation points in $\mathsf{S}_{\Omega,k}$ will be completely replaced at the next stage, we call this type of deep adaptive sampling strategy DAS-R for short. The alternative approach given in Remark \ref{rmk:kl} can also be used to obtain $\hat{p}_{\mathsf{KRnet}}(\bx;\Theta_f^{*,(k)})$. 

We now look at strategy II.
Unlike DAS-R, the number of collocation points in the training set $\mathsf{S}_{\Omega}$ increases gradually. So we denote this type of deep adaptive sampling strategy by DAS-G for short.  Starting with an initial set of collocation points $\mathsf{S}_{\Omega, 0} = \{\mb{x}_{\Omega}^{(i)} \}_{i=1}^{n_r}$ (as well as $\mathsf{S}_{\partial \Omega, 0}$)  drawn from a uniform distribution defined on $\Omega$, we minimize the empirical loss \eqref{eq_discrete_loss}  on the training set $\mathsf{S}_{\Omega, 0}$ (as well as $\mathsf{S}_{\partial \Omega, 0}$) to obtain $u(\mb{x};\Theta_N^{*,(1)})$. Once we have $u(\mb{x};\Theta_N^{*,(1)})$ in hand, we can seek $\hat{p}_{\mathsf{KRnet}}(\mb{x};\Theta_f^{*,(1)})$ using the residual $r^2(\mb{x};\Theta_N^{*,(1)})$. We here use uniform samples to approximate and minimize the cross entropy \eqref{eqn:ce_approx}. Similar to DAS-R, a new set of collocation points $\mathsf{S}^{g}_{\Omega, 1} = \{\mb{x}_{\Omega, 1}^{g, (i)}\}_{i=1}^{n_r}$ is generated by $\hat{p}_{\mathsf{KRnet}}(\bx;\Theta_f^{*, (1)})$ while the main difference is that we update the training set as $\mathsf{S}_{\Omega, 1} = \mathsf{S}_{\Omega, 0} \cup \mathsf{S}^g_{\Omega, 1}$, in other words, $\mathsf{S}_{\Omega,0}$ is augmented rather than replaced by $\mathsf{S}_{\Omega,1}^g$. We continue to update  $u(\mb{x};\Theta)$ using $\Theta_N^{*, (1)}$ as the initial parameters and $\mathsf{S}_{\Omega, 1}$ as the training set, which yields a refined model $u(\mb{x};\Theta_N^{*, (2)})$. Staring from $k=2$, we seek $\hat{p}_{\mathsf{KRnet}}(\bx;\Theta_f^{*,(k)})$ using the approach given in Remark \ref{rmk:kl}.
We repeat the procedure to obtain an adaptive algorithm (see Algorithm \ref{alg_das_g}). 

Our two adaptive training methods are summarized in Algorithm \ref{alg_das_r} (DAS-R) and Algorithm \ref{alg_das_g} (DAS-G), where $N_{\rm adaptive}$ is a given number of maximum adaptivity iterations, $m$ is the batch size for stochastic gradient, and $N_e$ is the number of epochs for training $u(\mb{x};\Theta)$ and $\hat{p}_{\mathsf{KRnet}}(\mb{x};\Theta_f)$. The algorithms consist of three steps: solving PDE, training KRnet and refining the training set. 

\begin{remark}
	In both strategies, we use uniform samples to approximate and minimize the cross entropy to obtain $\hat{p}_{\mathsf{KRnet}}(\mb{x};\Theta_f^{*,(1)})$, after which either equation \eqref{eqn:ce_approx} or equation \eqref{eqn:KL_p2r} can be employed. The main reason of doing this is to use uniform samples to capture the modes if the residual-induced distribution is multimodal. The obtained PDF  $\hat{p}_{\mathsf{KRnet}}(\mb{x};\Theta_f^{*,(1)})$ provides a good initialization when equation \eqref{eqn:KL_p2r} is employed to seek $\hat{p}_{\mathsf{KRnet}}(\mb{x};\Theta_f^{*,(i)})$ with $i>1$. This choice is usually not necessary if the modes are not strongly localized or the location of the modes can be encoded into the prior distribution of KRnet through a Gaussian mixture model.
\end{remark}
\begin{algorithm}
	\caption{DAS-R for PDEs}
	\label{alg_das_r}
	\begin{algorithmic}[1]
		\REQUIRE Initial  $\hat{p}_{\mathsf{KRnet}}(\bx;\Theta_f^{(0)})$ , $u(\mb{x};\Theta^{(0)})$, maximum epoch number $N_e$, batch size $m$, initial training set $\mathsf{S}_{\Omega, 0} = \{\mb{x}_{\Omega, 0}^{(i)} \}_{i=1}^{N_r}$ and $\mathsf{S}_{\partial \Omega, 0} = \{\mb{x}_{\partial \Omega, 0}^{(i)} \}_{i=1}^{N_b}$.
		\FOR{$k = 0:N_{\rm adaptive}-1$}
		\STATE // Solve PDE
		\FOR {$i = 1:N_e$}
		\FOR {$j$ steps}
		\STATE Sample $m$ samples from $\mathsf{S}_{\Omega, k}$.
		\STATE Sample $m$ samples from $\mathsf{S}_{\partial \Omega, k}$.
		\STATE Update $u(\mb{x};\Theta)$ by descending the stochastic gradient of $J^{\mathsf{IS}}_N(u(\mb{x};\Theta))$ (see equation \eqref{eqn_ISdiscreteloss}).
		\ENDFOR
		\ENDFOR
		\STATE // Train KRnet
		\FOR {$i = 1:N_e$}
		\FOR {$j$ steps}
		\STATE Sample $m$ samples from $\mathsf{S}_{\Omega, k}$.
		\STATE Update $\hat{p}_{\mathsf{KRnet}}(\bx;\Theta_f)$ by descending the stochastic gradient of $H(\hat{r}_{\bX},\hat{p}_{\mathsf{KRnet}})$ (see equation \eqref{eqn:ce_approx}).
		\ENDFOR
		\ENDFOR
		\STATE // Refine training set
		\STATE Generate  $\mathsf{S}_{\Omega, k+1} \subset \Omega$ through $\hat{p}_{\mathsf{KRnet}}(\bx;\Theta_f^{*,(k+1)})$.
		\ENDFOR	
		\ENSURE $u(\mb{x};\Theta_N^*)$
	\end{algorithmic}
\end{algorithm}

\begin{algorithm}
	\caption{DAS-G for PDEs}
	\label{alg_das_g}
	\begin{algorithmic}[1]
		\REQUIRE Initial  $\hat{p}_{\mathsf{KRnet}}(\bx;\Theta_f^{(0)})$ , $u(\mb{x};\Theta^{(0)})$, maximum epoch number $N_e$, batch size $m$, initial training set $\mathsf{S}_{\Omega, 0} = \{\mb{x}_{\Omega, 0}^{(i)} \}_{i=1}^{n_r}$ and $\mathsf{S}_{\partial \Omega, 0} = \{\mb{x}_{\partial \Omega, 0}^{(i)} \}_{i=1}^{N_b}$.
		\FOR{$k = 0:N_{\rm adaptive}-1$}
		\STATE // Solve PDE
		\FOR {$i = 1:N_e$}
		\FOR {$j$ steps}
		\STATE Sample $m$ samples from $\mathsf{S}_{\Omega, k}$.
		\STATE Sample $m$ samples from $\mathsf{S}_{\partial \Omega, k}$.
		\STATE Update $u(\mb{x};\Theta)$ by descending the stochastic gradient of $J_N(u(\mb{x};\Theta))$ (see equation \eqref{eq_discrete_loss}).
		\ENDFOR
		\ENDFOR
		\STATE // Train KRnet
		\FOR {$i = 1:N_e$}
		\FOR {$j$ steps}
		\STATE Sample $m$ samples from $\mathsf{S}_{\Omega, k}$.
		\STATE Update $\hat{p}_{\mathsf{KRnet}}(\bx;\Theta_f)$ by descending the stochastic gradient of $H(\hat{r}_{\bX},\hat{p}_{\mathsf{KRnet}})$ (see equation \eqref{eqn:ce_approx}).
		\ENDFOR
		\ENDFOR
		\STATE // Refine training set
		\STATE Generate  $\mathsf{S}^g_{\Omega, k+1} \subset \Omega$ with size $n_r$ through $\hat{p}_{\mathsf{KRnet}}(\bx;\Theta_f^{*,(k+1)})$.
		\STATE $\mathsf{S}_{\Omega, k+1} = \mathsf{S}_{\Omega, k} \cup \mathsf{S}^g_{\Omega, k+1}$.
		\ENDFOR	
		\ENSURE $u(\mb{x};\Theta_N^*)$
	\end{algorithmic}
\end{algorithm}

\section{Analysis of DAS}\label{sec_das_analysis}

As discussed in section \ref{sec_das}, the key point of our DAS method is to achieve variance reduction for the discretization of the residual loss, based on which we expect to improve the accuracy of the approximate solution. Under certain conditions, we show that the expectation of error bound becomes smaller when the adaptive sampling strategy is employed. 

\begin{assump} \label{assump_norm_relations} \cite{bochev2016least}
	In problem \eqref{eq_pde}, we let $F =\mathscr{H}$ be a Hilbert space and $\mathcal{L}$ a linear operator. Assume that the differential operator $\mathcal{L}$ and the boundary operator $\mathfrak{b}$ satisfy 
	\begin{equation}
		C_1 \norm{v}{2, \Omega} \leq \norm{\mathcal{L} v}{2, \Omega} + \norm{\mathfrak{b} v}{2, \partial \Omega} \leq C_2 \norm{v}{2, \Omega} \quad  \forall  v \in \mathscr{H}
	\end{equation}
	where $\mathscr{H}$ is a Hilbert space defined on $\Omega$ and the positive constants $C_1$ and $C_2$ are independent of $v$. 
\end{assump}

The above condition is called the stability bound \cite{bochev2016least}, which is essential to the existence and uniqueness of problem \eqref{eq_pde}.
Except for this assumption, the following two assumptions for the relationship between $r(\mb{x};\Theta_N^{*, (k)})$ and $\hat{p}_{\mathsf{KRnet}}(\mb{x};\Theta_f^{*,(k)})$ are given.

\begin{assump} \label{assump_res_pdf}
	Assume that $\hat{p}_{\mathsf{KRnet}}(\mb{x};\Theta_f^{*,(k)})$ is the optimal candidate for the change of measure in equation \eqref{eq_imploss}
	\begin{equation}
		\hat{p}_{\mathsf{KRnet}}(\mb{x};\Theta_f^{*,(k)}) = c_k r^2(\mb{x};\Theta_N^{*, (k)})
	\end{equation}
	where $c_k = 1/ \int_{\Omega} r^2(\mb{x};\Theta_N^{*,(k)})d\mb{x}$ is the normalization constant.
\end{assump}

\begin{assump} \label{assump_bounded_var}
	Let
	\begin{equation*}
		R_k = \frac{1}{N_r}\sum\limits_{i=1}^{N_r} \frac{ r^2(\mb{x}_{\Omega}^{(i)};\Theta_{N}^{*,(k)})}{\hat{p}_{\mathsf{KRnet}}(\mb{x}_{\Omega}^{(i)}; \Theta_f^{*, (k-1)})}
	\end{equation*}
	be the discrete residual loss at the $k$-th stage, where each $\mb{x}_{\Omega}^{(i)}$ is drawn from $\hat{p}_{\mathsf{KRnet}}(\mb{x};\Theta_f^{*, (k-1)})$. Assume that $r^2(\mb{x}_{\Omega}^{(i)};\Theta_{N}^{*,(k)})/\hat{p}_{\mathsf{KRnet}}(\mb{x}_{\Omega}^{(i)}; \Theta_f^{*, (k-1)})  \in [\tau_1, \tau_2]$ almost surely for each $i = 1, 2, \ldots, N_r$.
\end{assump}

At each adaptivity stage, the error of the approximate solution is estimated as follows.
\begin{thm} \label{thm_error_bound}
	Let $u(\mb{x};\Theta_N^{*, (k)}) \in F$ be a solution of \eqref{eq_discrete_loss} where the collocation points are independently drawn from $\hat{p}_{\mathsf{KRnet}}(\mb{x};\Theta_f^{*,{(k-1)}})$. Suppose that Assumption \ref{assump_norm_relations} and Assumption \ref{assump_bounded_var} are satisfied. Given $0 < \varepsilon < 1$, the following error estimate holds 
	\begin{equation*}
		\norm{u(\mb{x};\Theta_N^{*, (k)}) - u(\mb{x})}{2, \Omega} \leq \sqrt{2} C_1^{-1} \left( R_k + \varepsilon + \norm{b(\mb{x};\Theta_N^{*,(k)})}{2, \partial \Omega}^2 \right)^{\frac{1}{2}}.
	\end{equation*}
	with probability at least $1 - \mathrm{exp}(-2N_r \varepsilon^2/(\tau_2-\tau_1)^2)$.
\end{thm}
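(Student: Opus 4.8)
The plan is to combine the deterministic stability estimate of Assumption~\ref{assump_norm_relations} with a one-sided concentration bound (Hoeffding) for the importance-sampling estimator $R_k$, using the almost-sure range from Assumption~\ref{assump_bounded_var}.

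First I would introduce the error function $v = u(\mb{x};\Theta_N^{*,(k)}) - u(\mb{x})$. Since $\mathcal{L}$ is linear and the exact solution satisfies $\mathcal{L}u = s$, $\mathfrak{b}u = g$, we have $\mathcal{L}v = \mathcal{L}u(\mb{x};\Theta_N^{*,(k)}) - s = r(\mb{x};\Theta_N^{*,(k)})$ and $\mathfrak{b}v = b(\mb{x};\Theta_N^{*,(k)})$. Applying the lower bound in Assumption~\ref{assump_norm_relations} to $v$ gives
\[
C_1 \norm{v}{2,\Omega} \leq \norm{r(\mb{x};\Theta_N^{*,(k)})}{2,\Omega} + \norm{b(\mb{x};\Theta_N^{*,(k)})}{2,\partial\Omega},
\]
and squaring together with the elementary inequality $(a+b)^2 \leq 2a^2 + 2b^2$ yields
\[
C_1^2 \norm{v}{2,\Omega}^2 \leq 2\norm{r(\mb{x};\Theta_N^{*,(k)})}{2,\Omega}^2 + 2\norm{b(\mb{x};\Theta_N^{*,(k)})}{2,\partial\Omega}^2 .
\]
It therefore remains to control $\norm{r(\mb{x};\Theta_N^{*,(k)})}{2,\Omega}^2 = \int_\Omega r^2(\mb{x};\Theta_N^{*,(k)})\,d\mb{x}$ by $R_k$.

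The second step is probabilistic. Conditioning on $\Theta_N^{*,(k)}$ and $\Theta_f^{*,(k-1)}$, the terms $Y_i := r^2(\mb{x}_\Omega^{(i)};\Theta_N^{*,(k)})/\hat{p}_{\mathsf{KRnet}}(\mb{x}_\Omega^{(i)};\Theta_f^{*,(k-1)})$, with $\mb{x}_\Omega^{(i)} \sim \hat{p}_{\mathsf{KRnet}}(\cdot;\Theta_f^{*,(k-1)})$, are i.i.d.\ and the change-of-measure identity gives $\mathbb{E}[Y_i] = \int_\Omega r^2(\mb{x};\Theta_N^{*,(k)})\,d\mb{x} = \norm{r(\mb{x};\Theta_N^{*,(k)})}{2,\Omega}^2$; that is, $R_k = \tfrac{1}{N_r}\sum_i Y_i$ is an unbiased Monte Carlo estimator of the residual loss. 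By Assumption~\ref{assump_bounded_var} each $Y_i \in [\tau_1,\tau_2]$ almost surely, so Hoeffding's inequality applied to the lower tail of $R_k$ gives
\[
\mathbb{P}\!\left(\norm{r(\mb{x};\Theta_N^{*,(k)})}{2,\Omega}^2 - R_k > \varepsilon\right) \leq \exp\!\left(-\frac{2N_r\varepsilon^2}{(\tau_2-\tau_1)^2}\right).
\]
Hence with probability at least $1-\exp(-2N_r\varepsilon^2/(\tau_2-\tau_1)^2)$ we have $\norm{r(\mb{x};\Theta_N^{*,(k)})}{2,\Omega}^2 \leq R_k + \varepsilon$; substituting into the deterministic estimate and taking square roots yields the claimed bound.

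The point requiring care is the interplay between the randomness of the training set and that of $R_k$: strictly speaking $\Theta_N^{*,(k)}$ is itself a function of collocation points, so $R_k$ is not literally an average of i.i.d.\ terms against a fixed integrand. I would handle this by conditioning on $\Theta_N^{*,(k)}$ (equivalently, treating the samples entering $R_k$ as a fresh i.i.d.\ batch from $\hat{p}_{\mathsf{KRnet}}(\cdot;\Theta_f^{*,(k-1)})$, which is exactly how Assumption~\ref{assump_bounded_var} is phrased), so that Hoeffding applies with the stated constant. A minor, modeling-level point is that $\hat{p}_{\mathsf{KRnet}}$ must be regarded as a density normalized on $\Omega$ for the identity $\mathbb{E}[Y_i]=\int_\Omega r^2\,d\mb{x}$ to be exact; the cutoff and logarithmic-mapping construction of section~\ref{sec:sample_generation} makes the discrepancy negligible for small $\delta$. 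Note that Assumption~\ref{assump_res_pdf} is not used here — it only enters when comparing $R_k$ across successive adaptivity stages.
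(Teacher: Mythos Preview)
Your proof is correct and follows essentially the same route as the paper: apply the stability lower bound of Assumption~\ref{assump_norm_relations} to $v=u(\cdot;\Theta_N^{*,(k)})-u$, convert the sum into $\sqrt{2}$ times the root of the sum of squares, and then replace $\norm{r}{2,\Omega}^2=\mathbb{E}(R_k)$ by $R_k+\varepsilon$ via the one-sided Hoeffding bound under Assumption~\ref{assump_bounded_var}. Your additional remarks on conditioning on $\Theta_N^{*,(k)}$ to make the $Y_i$ genuinely i.i.d., on the normalization of $\hat{p}_{\mathsf{KRnet}}$ over $\Omega$, and on the non-use of Assumption~\ref{assump_res_pdf} are accurate and in fact sharpen points that the paper's proof leaves implicit.
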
 

The approximate solutions at two adjacent adaptivity stages satisfy: 
\begin{corollary}\label{cor:R_k}
	Under the same conditions of Theorem \ref{thm_error_bound}, suppose that Assumption \ref{assump_res_pdf} is satisfied and the boundary loss $J_b(u)$ is zero,  then the following inequality holds
	\begin{equation*}
		\mathbb{E} (R_{k+1})  \leq \mathbb{E} (R_{k}).
	\end{equation*}
\end{corollary}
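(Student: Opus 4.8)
The plan is to combine two ingredients: (i) the defining optimality of $\Theta_N^{*,(k+1)}$ as the minimizer of the importance-sampled residual loss built on samples from $\hat p_{\mathsf{KRnet}}(\cdot;\Theta_f^{*,(k)})$, so that its attained value cannot exceed the value of the same objective evaluated at the previous parameter $\Theta_N^{*,(k)}$; and (ii) the observation that under Assumption~\ref{assump_res_pdf} the importance weight degenerates to a constant, so that several of the random quantities involved collapse to the deterministic number $\mu_k:=\int_\Omega r^2(\mathbf x;\Theta_N^{*,(k)})\,d\mathbf x$. Chaining these through expectations yields $\mathbb E(R_{k+1})\le\mathbb E(R_k)$.

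Concretely, I would proceed as follows. Since the boundary loss $J_b(u)$ is zero, $\Theta_N^{*,(k+1)}$ minimizes $\Theta\mapsto \frac{1}{N_r}\sum_{i=1}^{N_r} r^2(\mathbf x_\Omega^{(i)};\Theta)/\hat p_{\mathsf{KRnet}}(\mathbf x_\Omega^{(i)};\Theta_f^{*,(k)})$ with $\mathbf x_\Omega^{(i)}\sim\hat p_{\mathsf{KRnet}}(\cdot;\Theta_f^{*,(k)})$. Evaluating this objective at the admissible but sub-optimal choice $\Theta=\Theta_N^{*,(k)}$ gives
\[
R_{k+1}\;\le\;\frac{1}{N_r}\sum_{i=1}^{N_r}\frac{r^2(\mathbf x_\Omega^{(i)};\Theta_N^{*,(k)})}{\hat p_{\mathsf{KRnet}}(\mathbf x_\Omega^{(i)};\Theta_f^{*,(k)})}.
\]
By Assumption~\ref{assump_res_pdf} at stage $k$, $\hat p_{\mathsf{KRnet}}(\mathbf x;\Theta_f^{*,(k)})=c_k\,r^2(\mathbf x;\Theta_N^{*,(k)})$ with $c_k=1/\mu_k$, so each summand equals $\mu_k$ and hence $R_{k+1}\le\mu_k$ almost surely; taking expectations, $\mathbb E(R_{k+1})\le\mathbb E(\mu_k)$. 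On the other hand, $R_k$ is an importance-sampling estimator of $\mu_k$ based on samples from $\hat p_{\mathsf{KRnet}}(\cdot;\Theta_f^{*,(k-1)})$, so the change-of-measure identity of \eqref{eq_imploss} (conditioning on $\Theta_N^{*,(k)}$ and $\Theta_f^{*,(k-1)}$) gives $\mathbb E(R_k)=\mathbb E(\mu_k)$. Combining the two relations yields $\mathbb E(R_{k+1})\le\mathbb E(R_k)$.

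The step that needs the most care is the unbiasedness identity $\mathbb E(R_k)=\mathbb E(\mu_k)$: the change-of-measure argument presupposes that the integrand $r^2(\cdot;\Theta_N^{*,(k)})/\hat p_{\mathsf{KRnet}}(\cdot;\Theta_f^{*,(k-1)})$ is independent of the samples over which the average defining $R_k$ is formed, whereas $\Theta_N^{*,(k)}$ is itself obtained from those samples. I would handle this either by reading $R_k$ as computed on a fresh independent draw from $\hat p_{\mathsf{KRnet}}(\cdot;\Theta_f^{*,(k-1)})$ (consistent with the ``refine training set'' step of Algorithm~\ref{alg_das_r}), or by working in the idealized exact-minimization/population regime implicit in the ``$\arg\min$'' definitions, so that conditionally on $\Theta_N^{*,(k)}$ the estimator $R_k$ is unbiased; a minor additional point is that $\Theta_N^{*,(k+1)}$ is taken to be an exact minimizer. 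Finally, I would note that pairing this monotonicity with Theorem~\ref{thm_error_bound} shows that the expected error bound does not increase from one adaptivity stage to the next.
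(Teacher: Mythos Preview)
Your proposal is correct and follows essentially the same route as the paper: use the optimality of $\Theta_N^{*,(k+1)}$ to bound $R_{k+1}$ by the same empirical objective evaluated at $\Theta_N^{*,(k)}$, invoke Assumption~\ref{assump_res_pdf} so that this bound collapses to the constant $1/c_k=\mu_k$, and then identify $\mu_k$ with $\mathbb{E}(R_k)$. Your extra care about the unbiasedness step $\mathbb{E}(R_k)=\mathbb{E}(\mu_k)$ (flagging the dependence of $\Theta_N^{*,(k)}$ on the same samples) is a valid scruple that the paper simply elides.
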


Theorem \ref{thm_error_bound} provides an error estimate for the approximate solution similar to the results in \cite{shin2020error}. Corollary \ref{cor:R_k} outlines the error behavior of a sequence of approximate solutions  induced by adaptivity. However, it is not quite straightforward to quantify the decay of the error due to adaptive refinement. For example, for DAS-R the reduction in variance is up to a tail probability as shown in Lemma \ref{lem:kl_is} and for DAS-G the loss is changed at each adaptivity stage due to the modification of the training set. These issues are left for future study.

\section{Numerical experiments}\label{sec_numexp}

In this section, we conduct some numerical experiments to demonstrate the effectiveness of the proposed DAS method, including two low-dimensional and low regularity test problems, one high-dimensional linear test problem, and one high-dimensional nonlinear test problem. Due to the curse of dimensionality, data are sparse in high-dimensional spaces \cite{vershynin2018high, blum2020foundations, wright2021high}, which implies that effective samples should be able to deal with localized information. We mainly use low-dimensional and low regularity test problems to demonstrate that the sampling strategy affects significantly the performance of neural network approximation if the residual is strongly localized.  For comparison, we also test the performance of a heuristic method based on residual refinement \cite{lu2021deepxde,yu2022gradient} (see section \ref{sec_numexp_hd_linear} and section \ref{sec_numexp_hd_nonlinear}) for high-dimensional problems, where the heuristic method searches uniform samples to find the points with large residuals and add them to the current training set. The heuristic method is similar to DAS-G, where the main difference is that the selection of new samples in DAS-G is completely guided by an optimization problem while the heuristic method relies partially on the user's intuition. All deep neural network models are trained by the ADAM method \cite{kingma2017adam}. The penalty parameter in equation \eqref{eq_discrete_loss} is set to $\hat{\gamma} = 1$. The activation function of $u(\mb{x};\Theta)$ is set to the hyperbolic tangent function. The activation function of KRnet is the rectified linear unit (ReLU) function since we only use the KRnet for density approximation.

\subsection{Low-dimensional and low regularity test problems} \label{sec_numexp_peak}
In this part, two-dimensional low regularity problems are considered, where the solution of the first one has a peak and the solution of the second one has two peaks. 

\subsubsection{Two-dimensional peak problem}
The following elliptic equation is considered
\begin{equation}
	\begin{aligned}
		- \Delta u(x_1, x_2) &= s(x_1,x_2) \quad \text{in} \ \Omega, \\
		u(x_1, x_2) &= g(x_1, x_2) \quad \text{on} \  \partial \Omega,
	\end{aligned}	
\end{equation}
where the computation domain is $\Omega = [-1,1]^2$. In order to quantify the error, we use the following reference solution
\begin{equation*}
	u(x_1, x_2) = \mathrm{exp} \left(  -1000 [(x_1 -r_c)^2 + (x_2 - r_c)^2 ]\right),
\end{equation*}
which has a peak at the point $(r_c, r_c)$ and decreases rapidly away from $(r_c, r_c)$. This test problem is often used to test the performance of adaptive finite element methods \cite{mitchell2013collection,morin2002convergence}. 

We choose a six-layer fully connected neural network $u(\mb{x};\Theta)$ with $32$ neurons to approximate the solution. For KRnet, we take $L = 6$ affine coupling layers, and two fully connected layers with $24$ neurons for each affine coupling layer. The number of epochs for training both $u(\mb{x};\Theta)$ and $p(\mb{x};\Theta_f)$ is set to $N_e = 3000$. The learning rate for ADAM optimizer is set to $0.0001$, and the batch size is set to $m = 500$. Here, we set $(r_c, r_c) = (0.5, 0.5)$. To assess the effectiveness of our DAS methods,  we generate a uniform meshgrid with size $256 \times 256$ in $[-1,1]^2$ and compute the mean square error on these grid points. 

In Figure \ref{fig:peak2d_error_comparison}, we plot the approximation errors given by different sampling strategies with respect to the sample size in the left plot and with respect to the number of epochs in the right plot. 
For each $|\mathsf{S}_{\Omega}|$, we take three runs with different random seeds for initialization and compute the mean error of the three runs as the final error. For DAS strategies, the numbers of adaptivity iterations are set to $N_{\rm{adaptive}} = 4, 6, 8, 10$ for $\vert \mathsf{S}_{\Omega} \vert = 2 \times 10^3, 3 \times 10^3, 4 \times 10^3, 5 \times 10^3$ respectively, and  $n_r = 500$ is set for the DAS-G strategy (see section \ref{sec_das}). For the uniform sampling strategy, the number of epochs is set to be the same as the total number of epochs of each DAS method. It is clear that  for this test problem the DAS methods (DAS-G and DAS-R) have a better performance than the uniform sampling strategy and DAS-R performs better than DAS-G. For the same sample size, both DAS-R and DAS-G yield a smaller error than the uniform sampling method. In terms of the number of epochs, the errors of DAS-R and DAS-G decay in a more consistent way than the uniform sampling method.

In Figure \ref{fig:peak2d_sol} we compare the exact solution, the DAS solutions given by $5 \times 10^3$ nonuniform samples and the approximate solution given by $5 \times 10^3$ uniform samples. It is seen that DAS methods are much more effective than the uniform sampling method to capture the information in the region of low regularity. Figure \ref{fig:peak2d_error_at_k} shows the error evolution of DAS-R at different adaptivity iteration steps. It is seen that the approximation error drops as the adaptivity iteration step $k$ increases, which is consistent with Corollary \ref{cor:R_k}, and the relaxation time for the optimization iterations reduces as well.  Figure \ref{fig:peak2d_dasr_resample} shows the evolution of the training sets ($\vert \mathsf{S}_{\Omega} \vert = 5 \times 10^3$) of DAS-R method with respect to adaptivity iterations $k = 1, 4, 7, 9$, where the initial training set $\mathsf{S}_{\Omega, 0}$ consists of uniform collocation points on $\Omega$ (see section \ref{sec_das}). It is seen that the largest density of $\mathsf{S}_{\Omega,1}$ for DAS-R is around $[0.5,0.5]$ since $\mathsf{S}_{\Omega,k}$ is consistent with the residual-induced distribution. However, we also expect that the tail of the residual-induced distribution becomes heavier as $k$ increases since the adaptivity tries to make the residual-induced distribution more uniform, which is illustrated by $\mathsf{S}_{\Omega,4}$, $\mathsf{S}_{\Omega,7}$ and $\mathsf{S}_{\Omega,9}$. 
\begin{figure}
	\center{
		\includegraphics[width=0.42\textwidth]{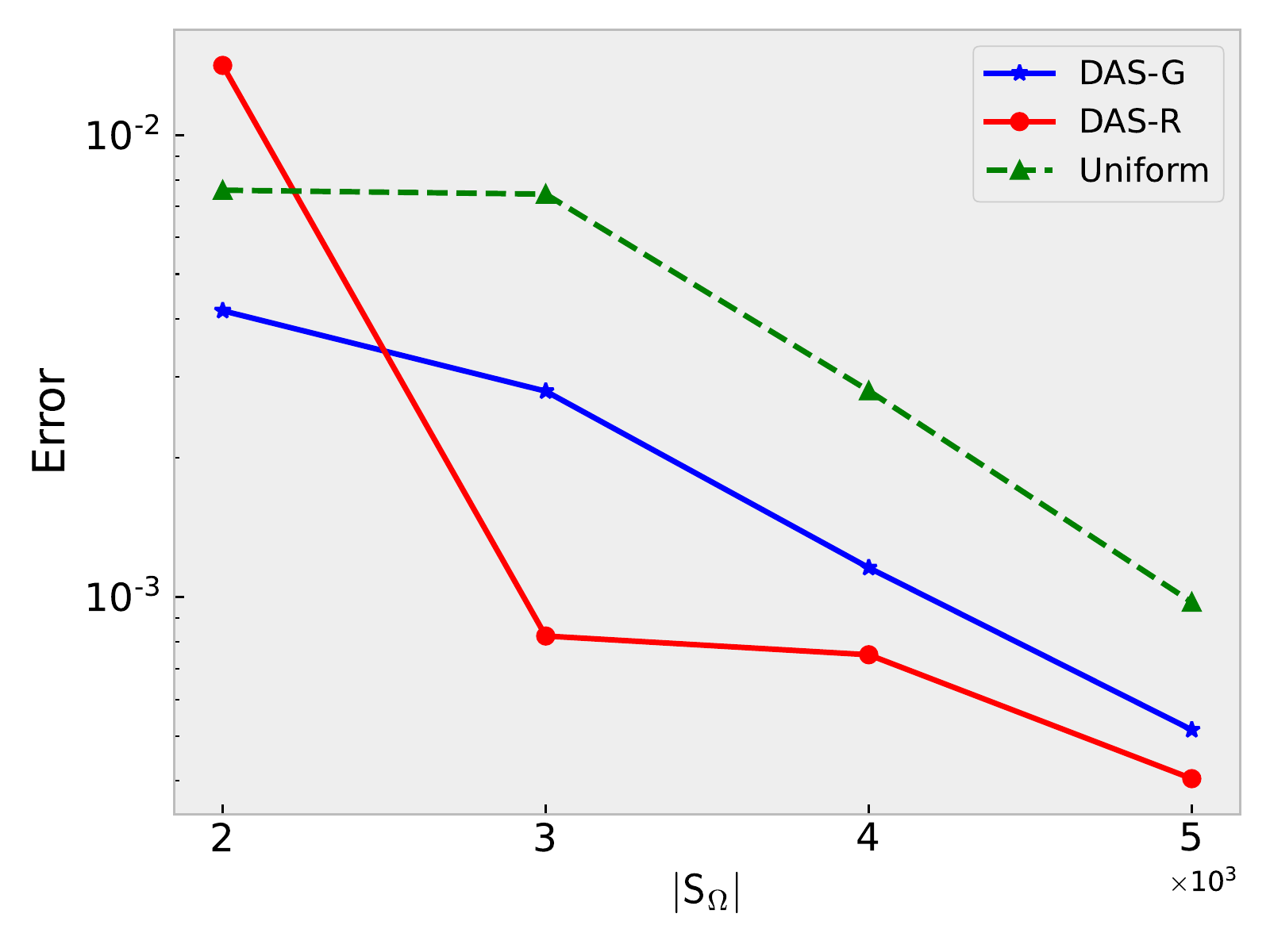}
		\includegraphics[width=0.42\textwidth]{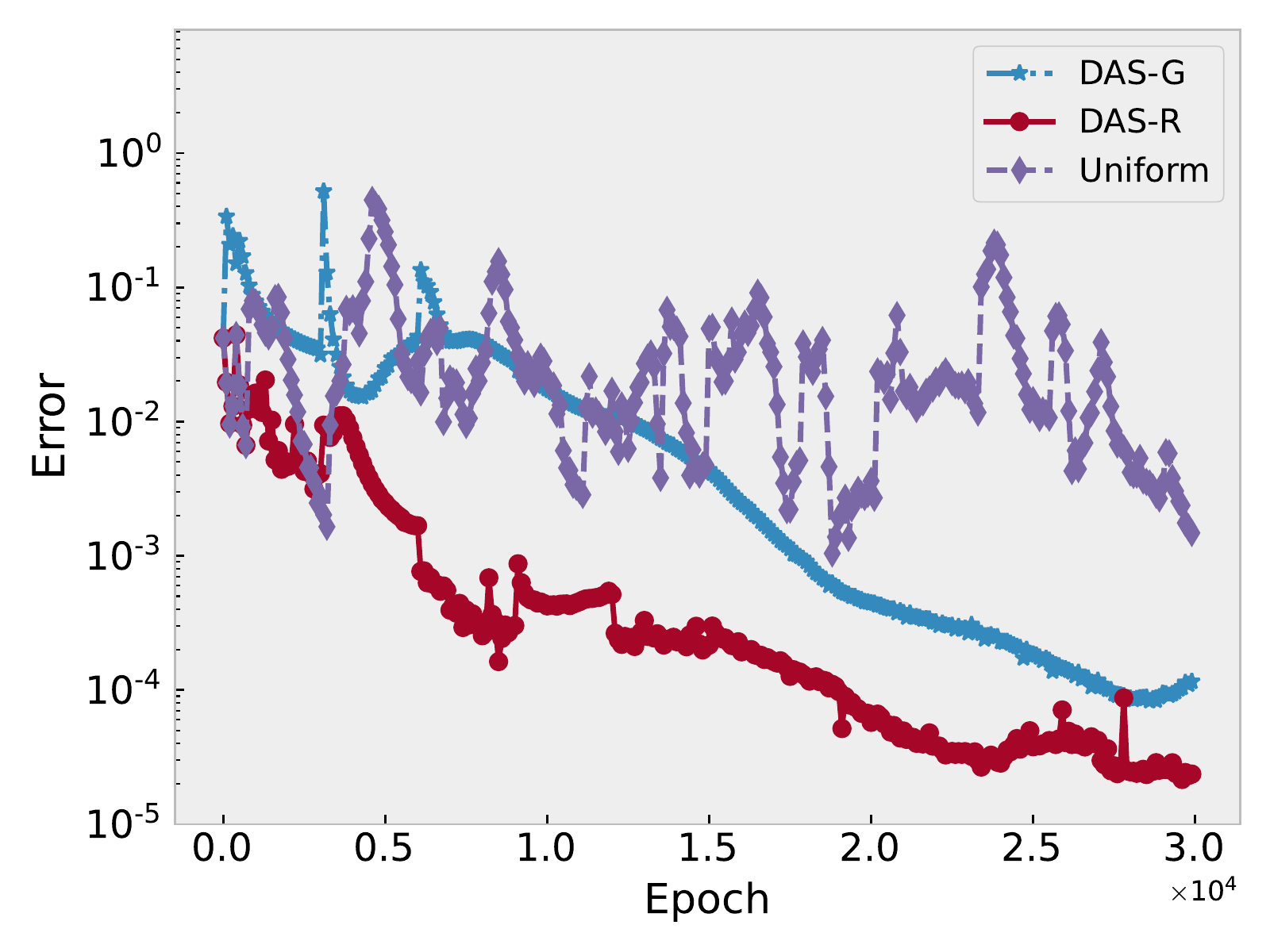}
	}
	\caption{Approximation errors for the two-dimensional peak test problem. Left: The error w.r.t sample size $\vert \mathsf{S}_{\Omega} \vert$; Right: The error w.r.t epoch for $\vert \mathsf{S}_{\Omega} \vert = 5 \times 10^3$. }\label{fig:peak2d_error_comparison}
\end{figure}

\begin{figure}[!ht]
	\centering
	\subfloat[][The exact solution. ]{\includegraphics[width=.38\textwidth]{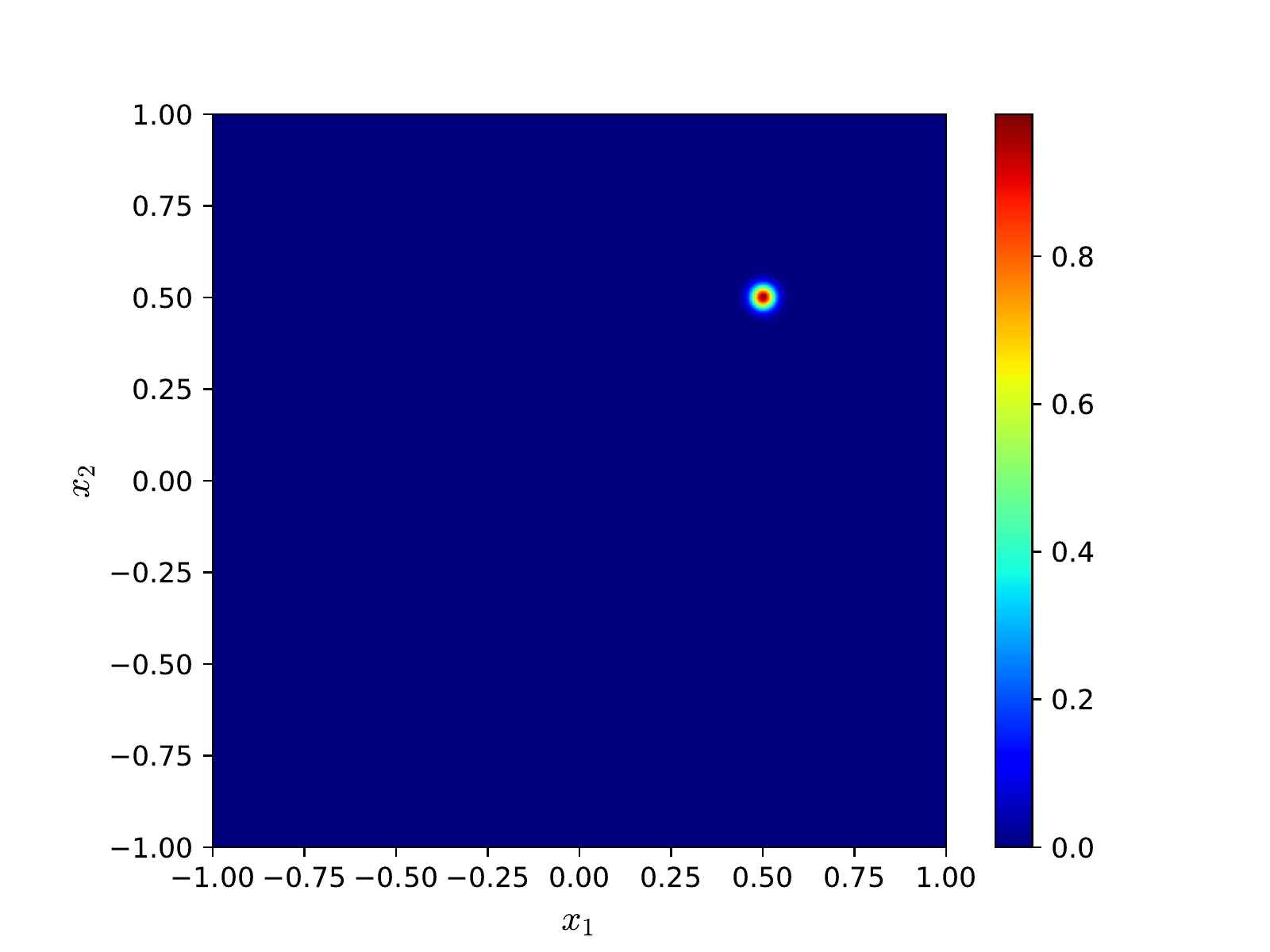}}\quad
	\subfloat[][DAS-R approximation. ]{\includegraphics[width=.38\textwidth]{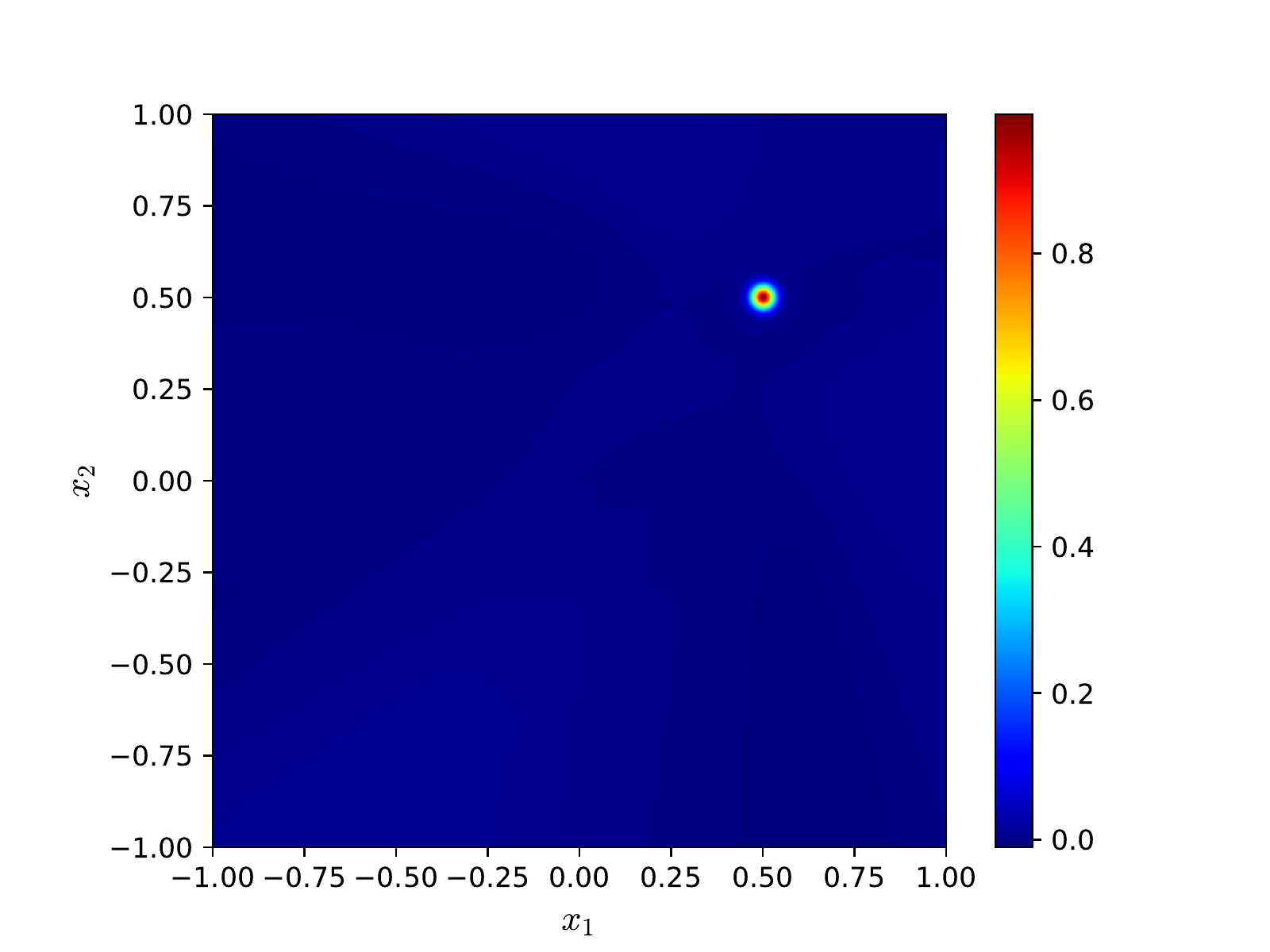}}\\
	\subfloat[][DAS-G approximation. ]{\includegraphics[width=.38\textwidth]{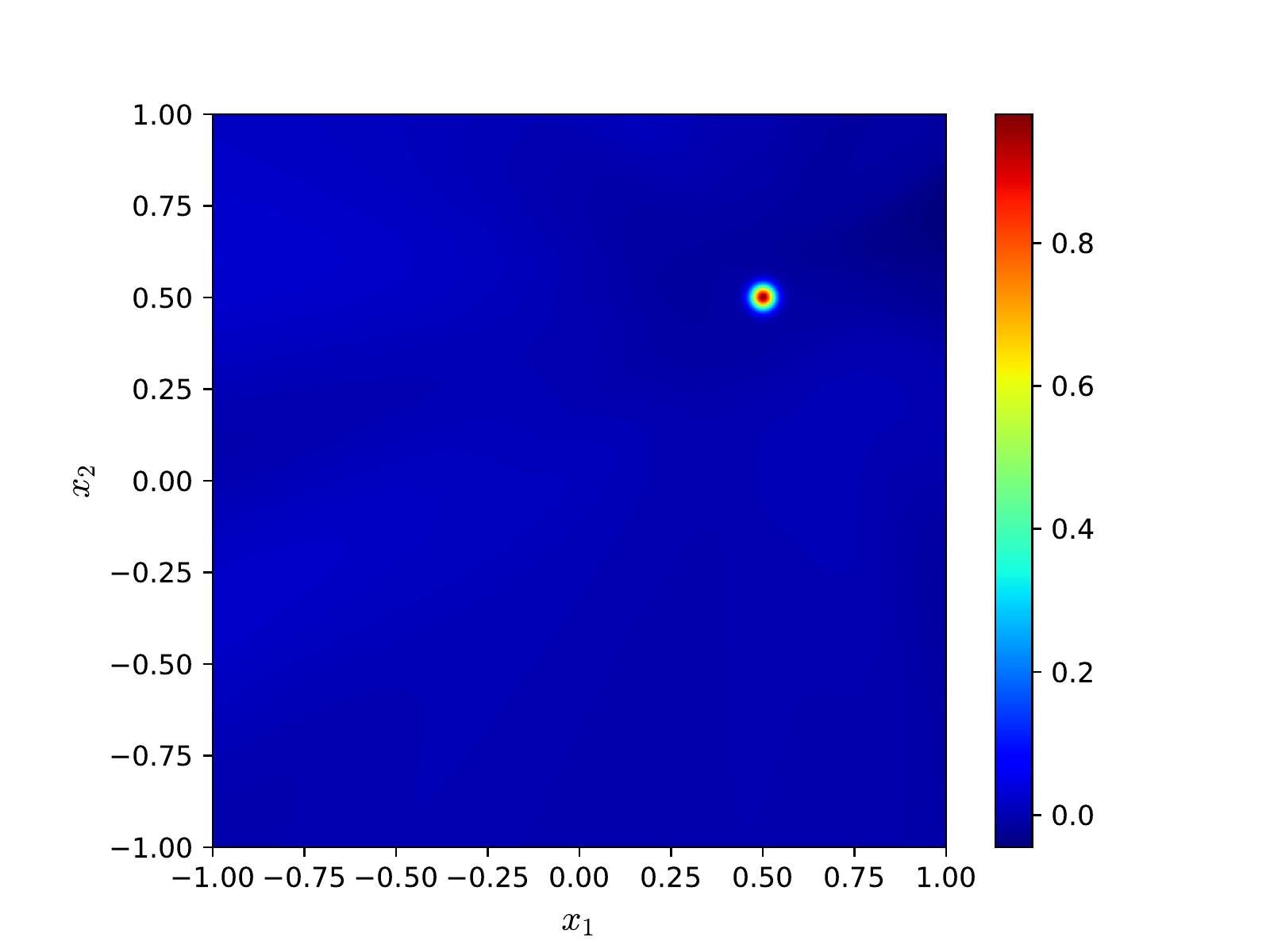}}\quad
	\subfloat[][Approximation using the uniform sampling strategy. ]{\includegraphics[width=.38\textwidth]{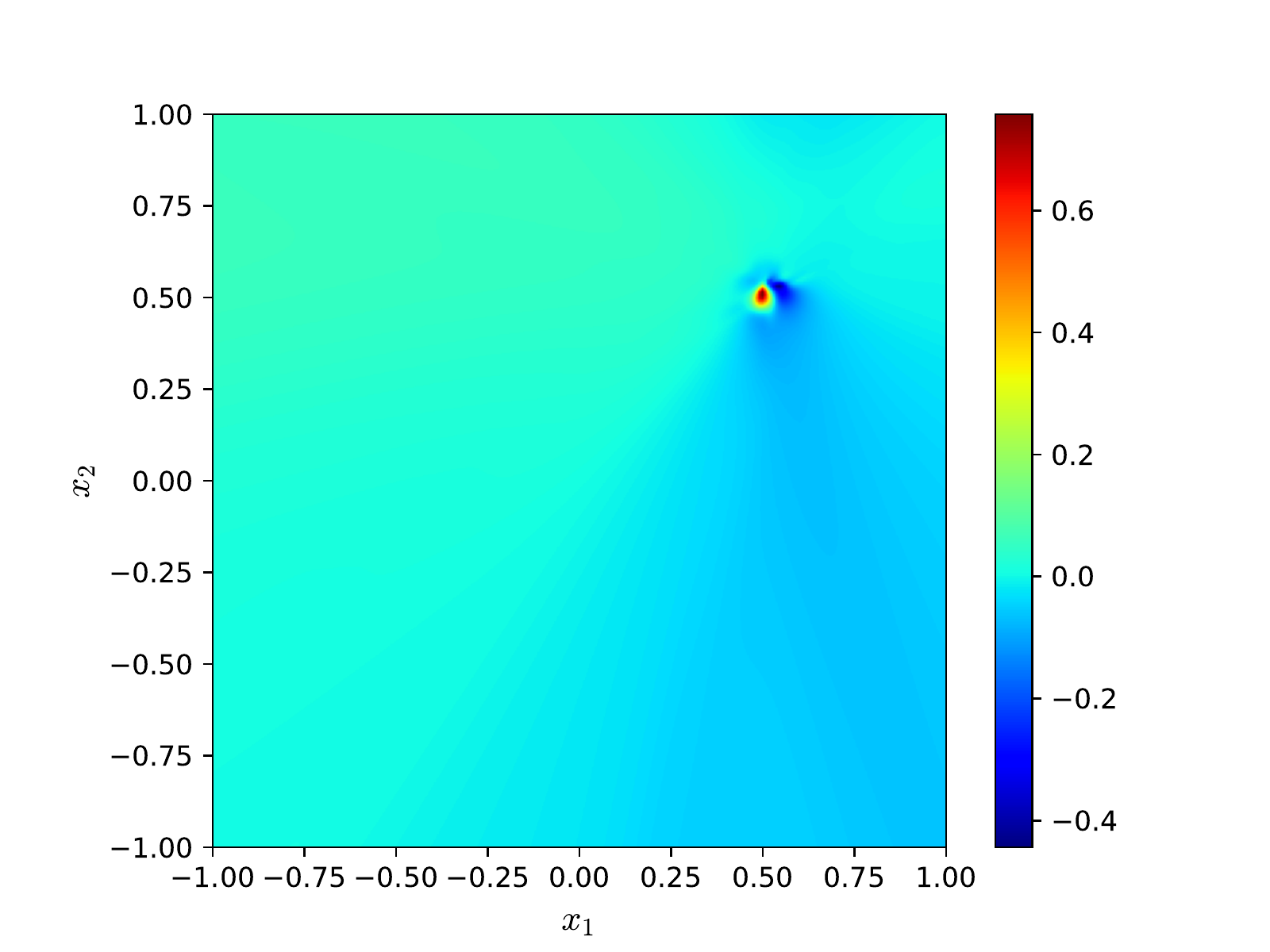}}
	\caption{Solutions, two-dimensional peak test problem. }
	\label{fig:peak2d_sol}
\end{figure}

\begin{figure}
	\center{
		\includegraphics[width=0.48\textwidth]{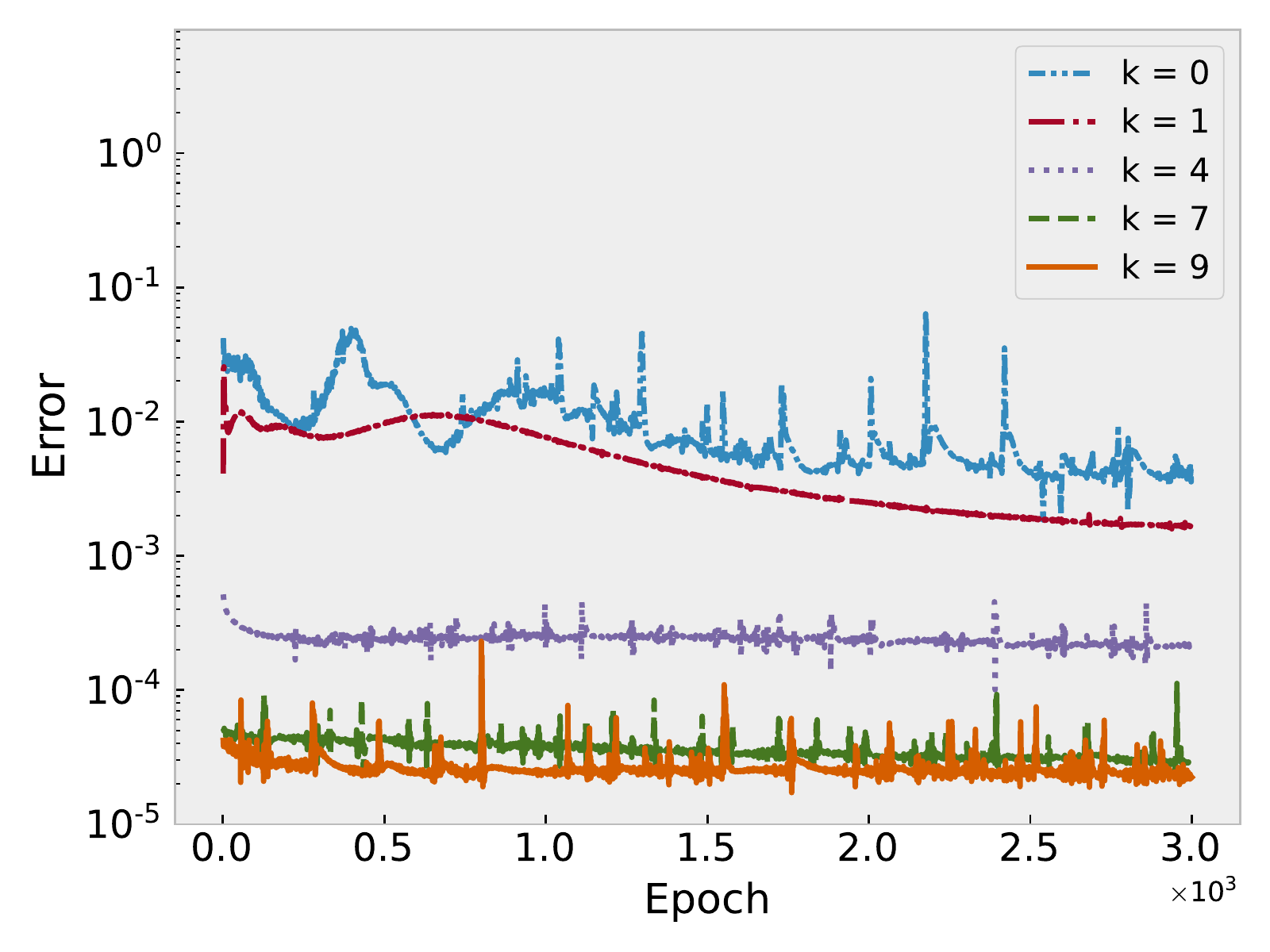}
	}
	\caption{The errors of DAS-R at certain adaptivity iteration steps for the two-dimensional peak test problem. $\vert \mathsf{S}_{\Omega} \vert = 5 \times 10^3$.}\label{fig:peak2d_error_at_k}
\end{figure}

\begin{figure}
	\center{
		\includegraphics[width=0.7\textwidth]{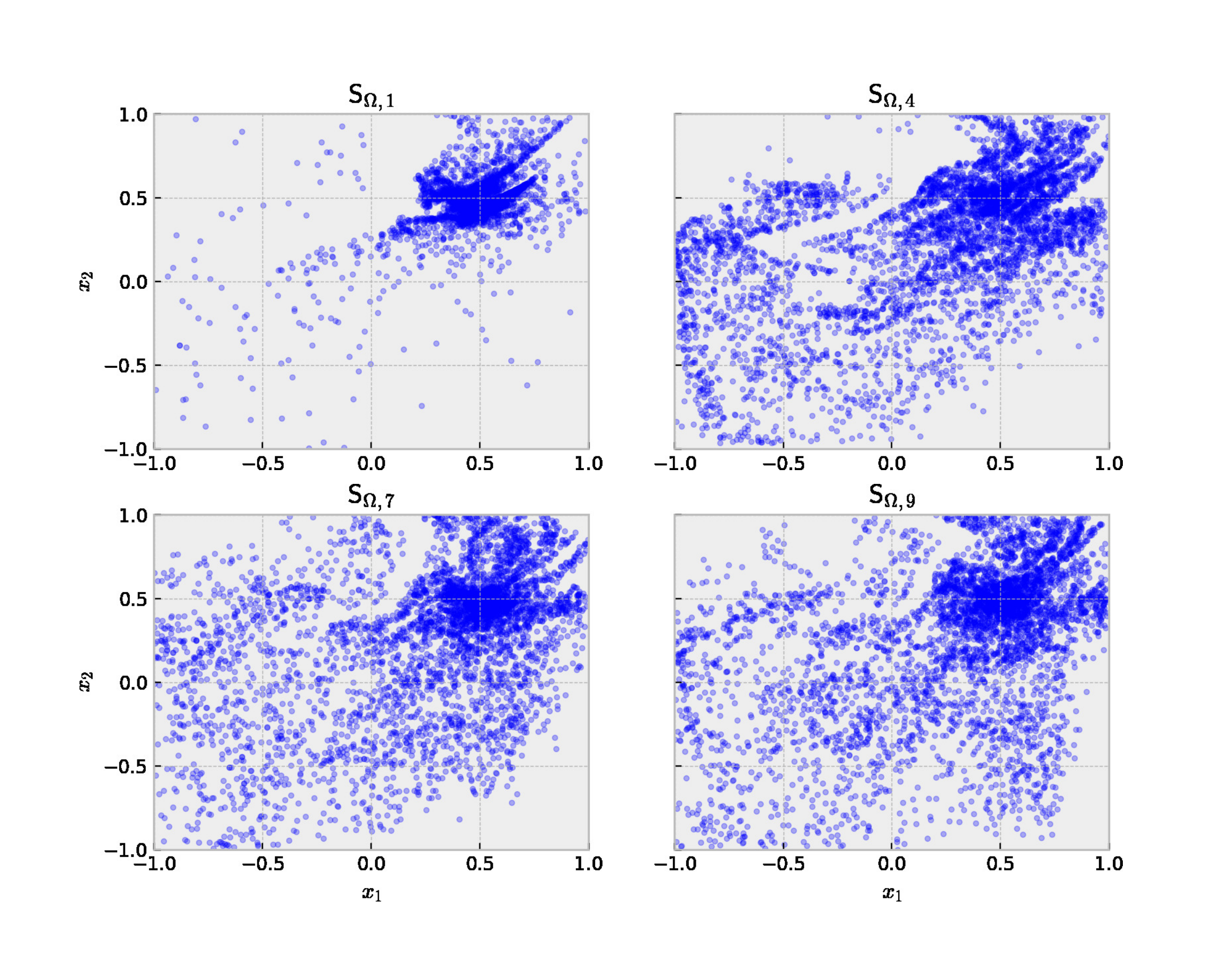}
	}
	\caption{The evolution of $\mathsf{S}_{\Omega, k}$ in DAS-R for the two-dimensional peak test problem.}\label{fig:peak2d_dasr_resample}
\end{figure}

\subsubsection{Two-dimensional test problem with two peaks}
In this test problem, we consider the following equation
\begin{equation}  \label{eq_two_peaks_pde}
	\begin{aligned}
		-\nabla \cdot \left [u(x_1, x_2) \nabla (x_1^2 + x_2^2) \right ] &+ \nabla^2 u(x_1, x_2) = s(x_1, x_2) \quad \text{in} \ \Omega, \\
		u(x_1, x_2) &= g(x_1, x_2) \quad \text{on} \  \partial \Omega,
	\end{aligned}	
\end{equation}
where the computation domain is $\Omega = [-1,1]^2$. The exact solution of \eqref{eq_two_peaks_pde} is chosen as
\begin{equation*}
	u(x_1, x_2) = \mathrm{e} ^{  -1000 [(x_1 -0.5)^2 + (x_2 - 0.5)^2 ]} + \mathrm{e}^ {  -1000 [(x_1 + 0.5)^2 + (x_2 + 0.5)^2 ]}, 
\end{equation*}
which has two peaks at the points $(0.5, 0.5)$ and $(-0.5, -0.5)$. Here, the Dirichlet boundary condition on $\partial \Omega$ is given by the exact solution.

We choose a six-layer fully connected neural network $u(\mb{x};\Theta)$ with $64$ neurons to approximate the solution of \eqref{eq_two_peaks_pde}. For KRnet, we take $L = 8$ affine coupling layers, and two fully connected layers with $48$ neurons for each affine coupling layer. The number of epochs for training both $u(\mb{x};\Theta)$ and $p(\mb{x};\Theta_f)$ is set to $N_e = 5000$. The learning rate for ADAM optimizer is set to $0.0001$, and the batch size is set to $m = 500$. Again, we generate a uniform meshgrid with size $256 \times 256$ in $[-1,1]^2$ and compute the mean square error on these grid points to assess the effectiveness of our DAS methods. 

Figure \ref{fig:2peaks2d_error_comparison} shows the approximation errors for this test problem, where the left one displays the errors with respect to the sample size $\vert \mathsf{S}_{\Omega} \vert$ for different sampling strategies, and the right one shows the error evolution of DAS-G at different adaptivity iteration steps. For each $\vert \mathsf{S}_{\Omega} \vert$, we again take three runs with different random seeds for initialization and compute the mean error of the three runs as the final error. For the DAS-G strategy, the numbers of adaptivity iterations is set to $N_{\rm{adaptive}} = 5$ (also for DAS-R), and the numbers of collocation points in $\mathsf{S}^g_{\Omega} (k = 1,2,3,4)$ is set to $n_r = 500, 1 \times 10^3, 1.5 \times 10^3, 2 \times 10^3$ for
$\vert \mathsf{S}_{\Omega} \vert = 2.5 \times 10^3, 5 \times 10^3, 7.5 \times 10^3,  10^4$ respectively. For the uniform sampling strategy, we train the model
with $2.5 \times 10^4 $ epochs to match the total number of epochs of DAS methods. From Figure \ref{fig:2peaks2d_error_comparison}, it is seen that for this test problem our DAS methods (DAS-G and DAS-R) have a better performance than the uniform sampling strategy and DAS-G performs better than
DAS-R. It is also seen that the error decreases as the adaptivity iteration step $k$ increases.

In Figure \ref{fig:2peaks2d_sol} we compare the exact solution, the DAS solutions given by $10^4$ nonuniform samples and the approximate solution given by $10^4$ uniform samples. It is seen that DAS methods are much more effective than the uniform sampling method to capture the information around the two peaks. 
Figure \ref{fig:2peaks2d_dasg_resample} shows the evolution of $\mathsf{S}^g_{\Omega, k}$ of DAS-G method with respect to adaptivity iterations $k = 1, 2, 3, 4$ ($ \vert \mathsf{S}^g_{\Omega, k} \vert = 2 \times 10^3$), where the initial training set $\mathsf{S}_{\Omega, 0}$ consists of uniform collocation points on $\Omega$ (see section \ref{sec_das}). $\mathsf{S}_{\Omega,1}^g$ shows that the error profile has two peaks. After the training set is augmented with $\mathsf{S}_{\Omega,1}$, the error profile becomes more flat as shown by the distribution of $\mathsf{S}_{\Omega,2}^g$. After the training set is augmented with $\mathsf{S}_{\Omega,2}$, the largest error is found again around the two peaks, and then the subsequent augmentation of the training set yields a more flat error profile. Such a pattern is repeated until no improvement can be reached. 

\begin{figure}
	\center{
		\includegraphics[width=0.42\textwidth]{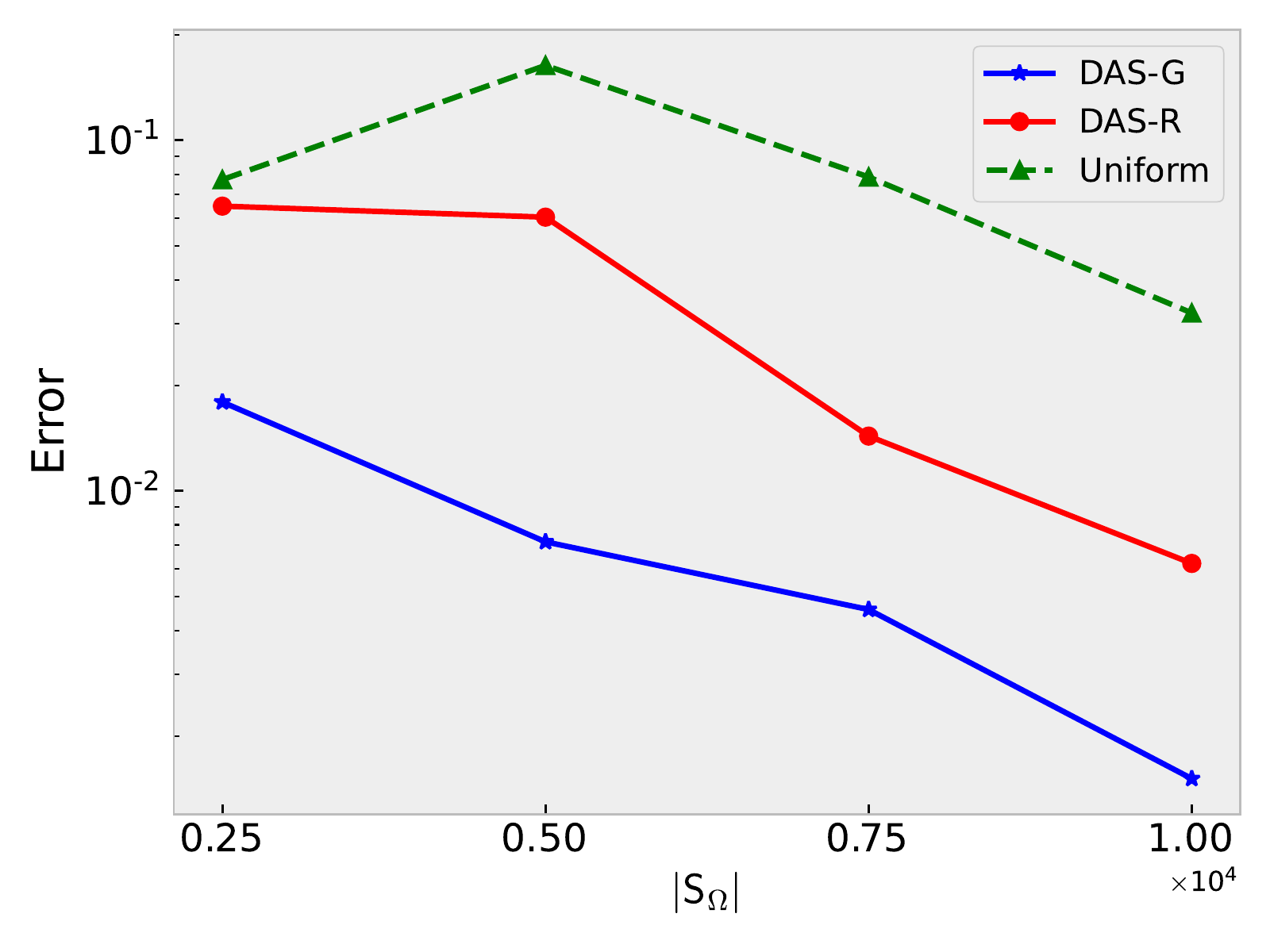}
		\includegraphics[width=0.42\textwidth]{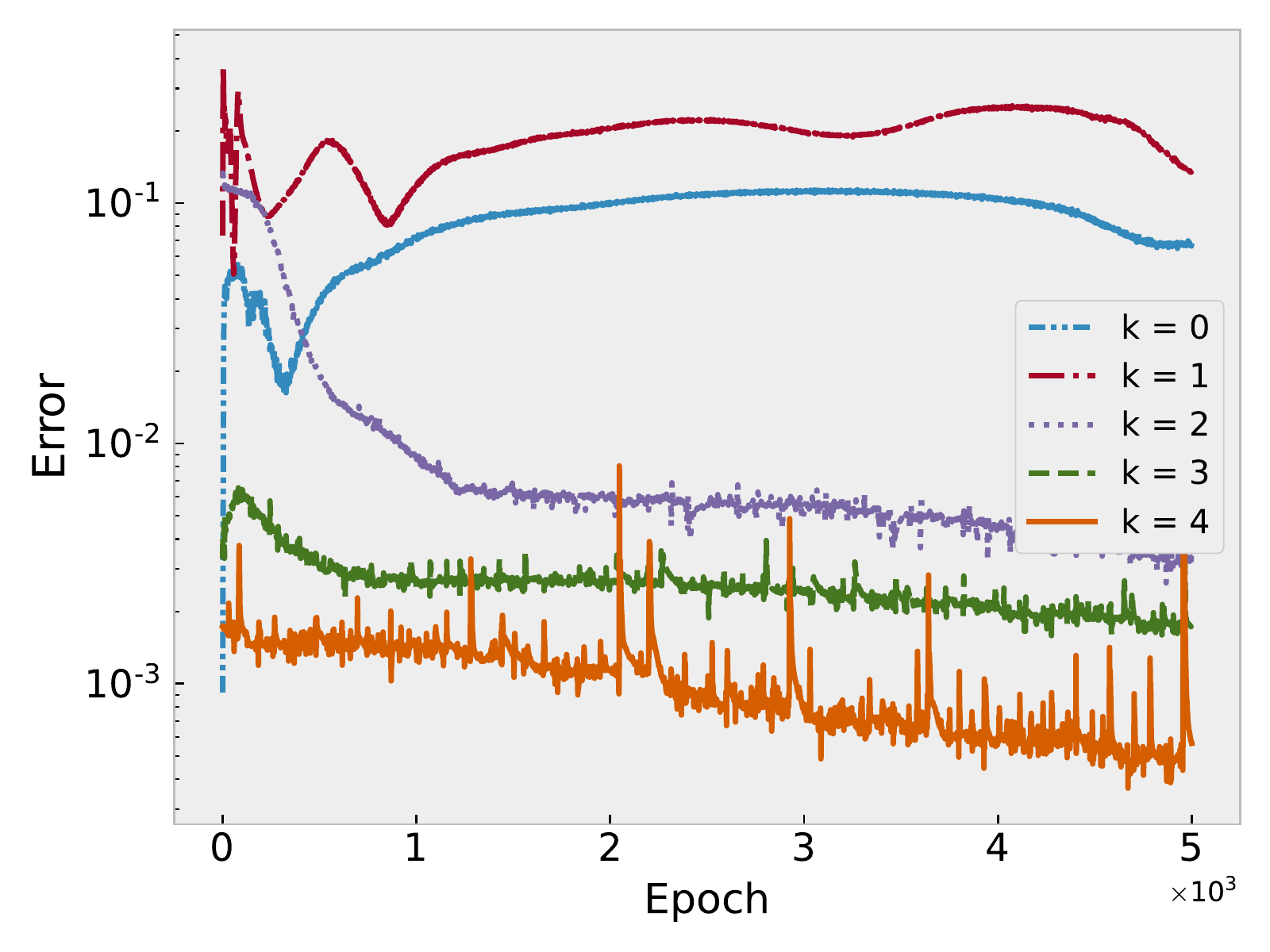}
	}
	\caption{Approximation errors for the two-dimensional test problem with two peaks. Left: The error w.r.t sample size $\vert \mathsf{S}_{\Omega} \vert$; Right: The errors of DAS-G at each adaptivity iteration steps. $\vert \mathsf{S}_{\Omega} \vert =  10^4$. }\label{fig:2peaks2d_error_comparison}
\end{figure}

\begin{figure}[!ht]
	\centering
	\subfloat[][The exact solution. ]{\includegraphics[width=.38\textwidth]{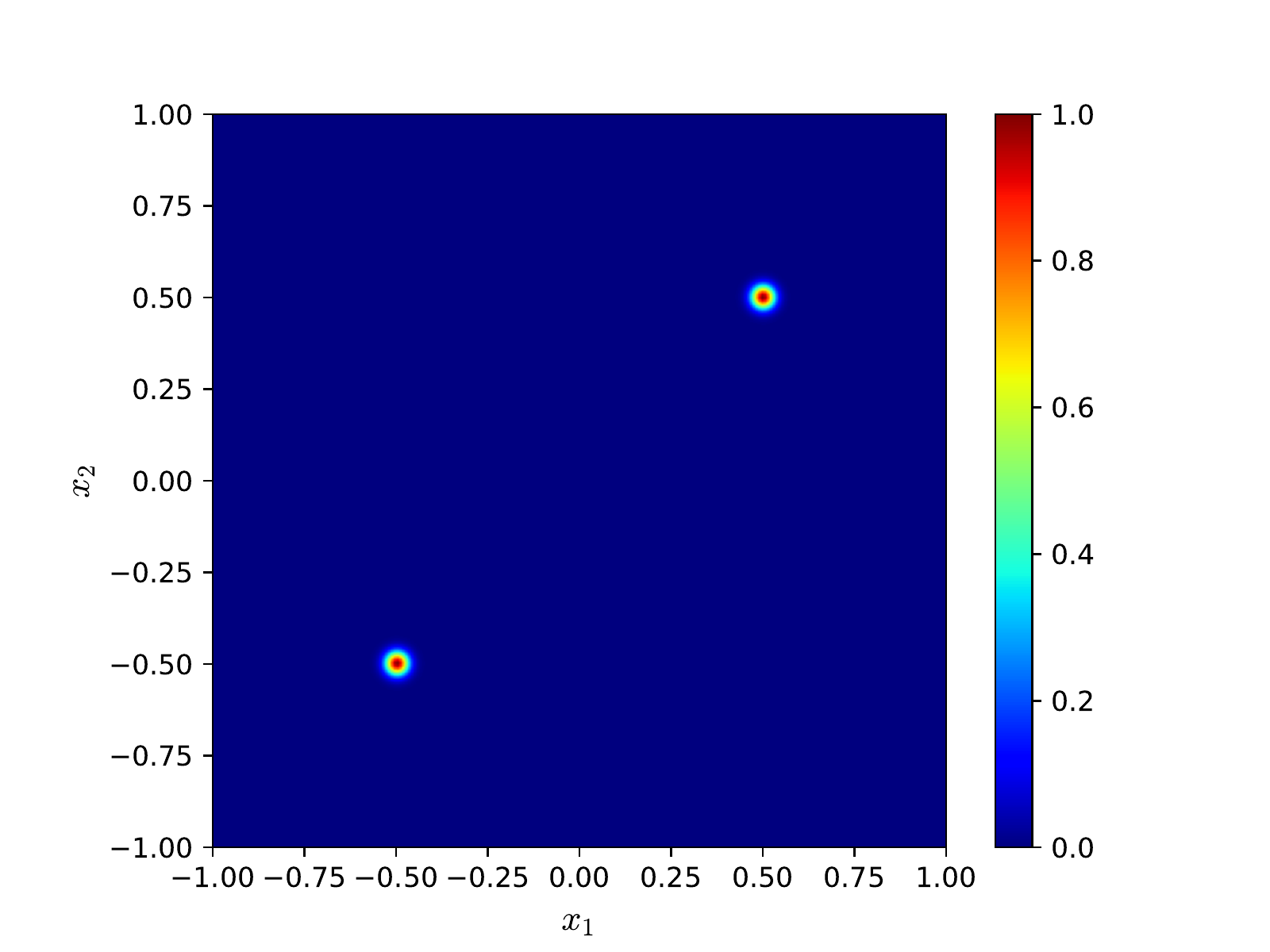}}\quad
	\subfloat[][DAS-R approximation. ]{\includegraphics[width=.38\textwidth]{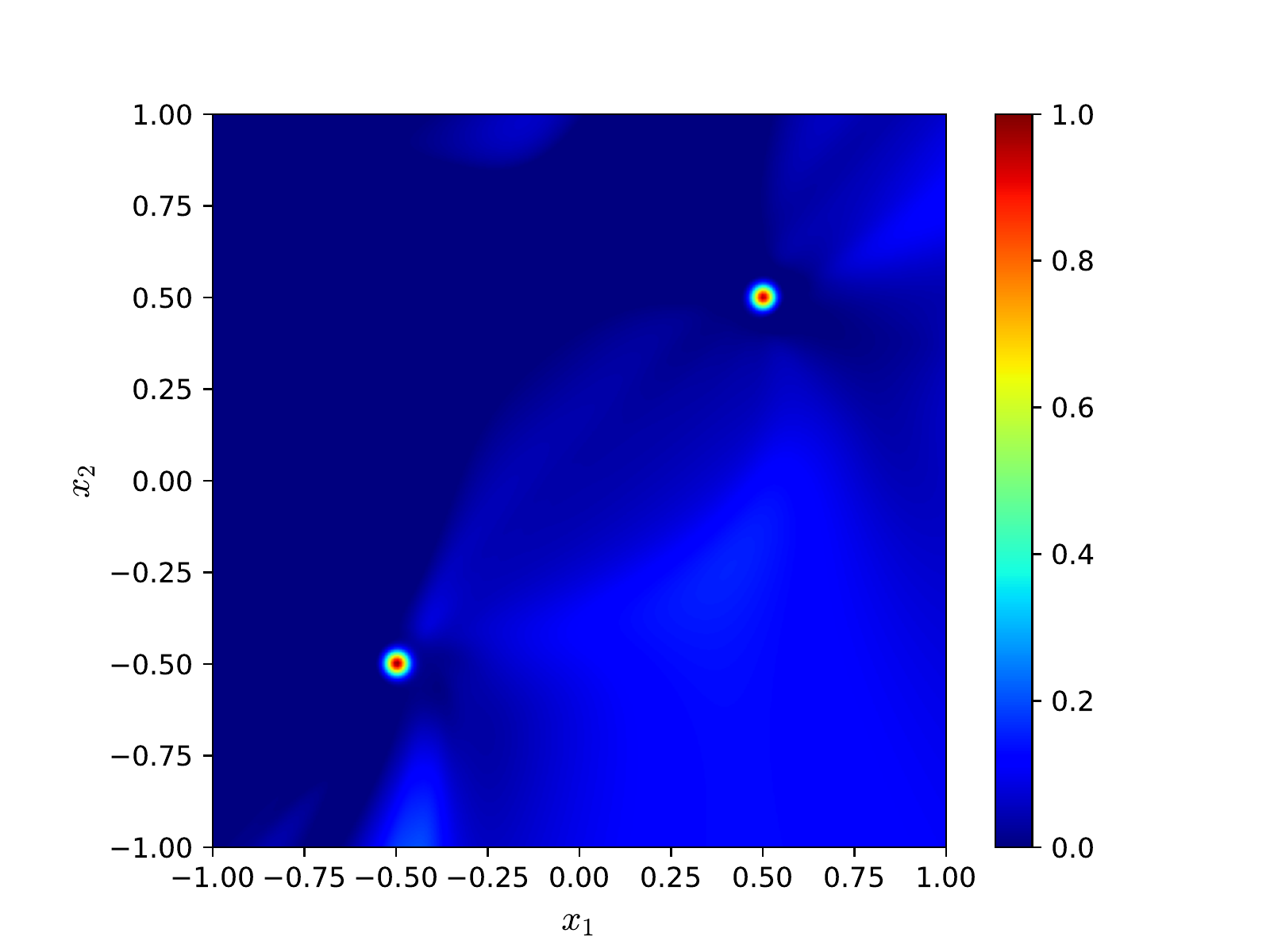}}\\
	\subfloat[][DAS-G approximation. ]{\includegraphics[width=.38\textwidth]{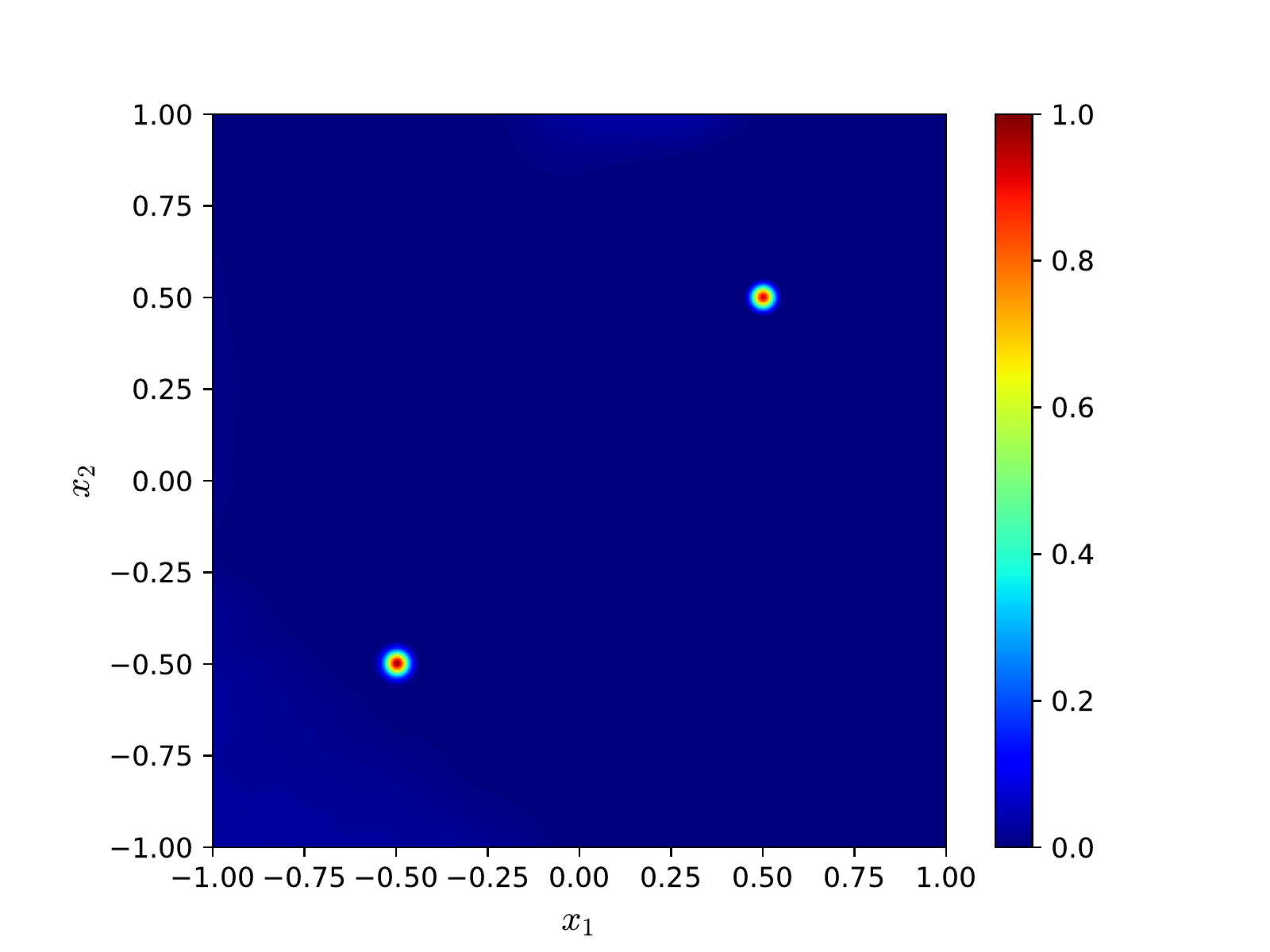}}\quad
	\subfloat[][Approximation using the uniform sampling strategy. ]{\includegraphics[width=.38\textwidth]{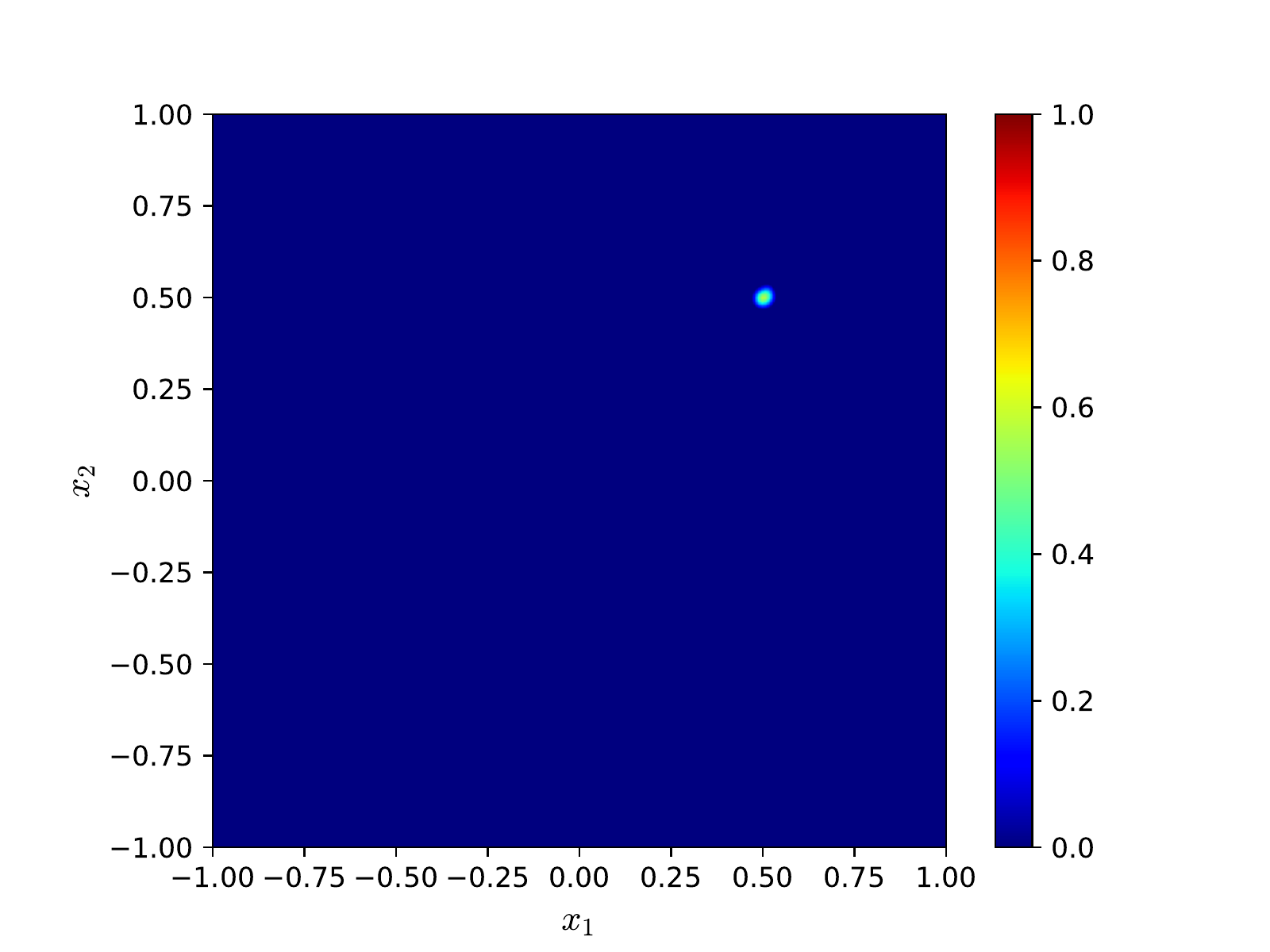}}
	\caption{Solutions, two-dimensional test problem with two peaks. }
	\label{fig:2peaks2d_sol}
\end{figure}

\begin{figure}
	\center{
		\includegraphics[width=0.7\textwidth]{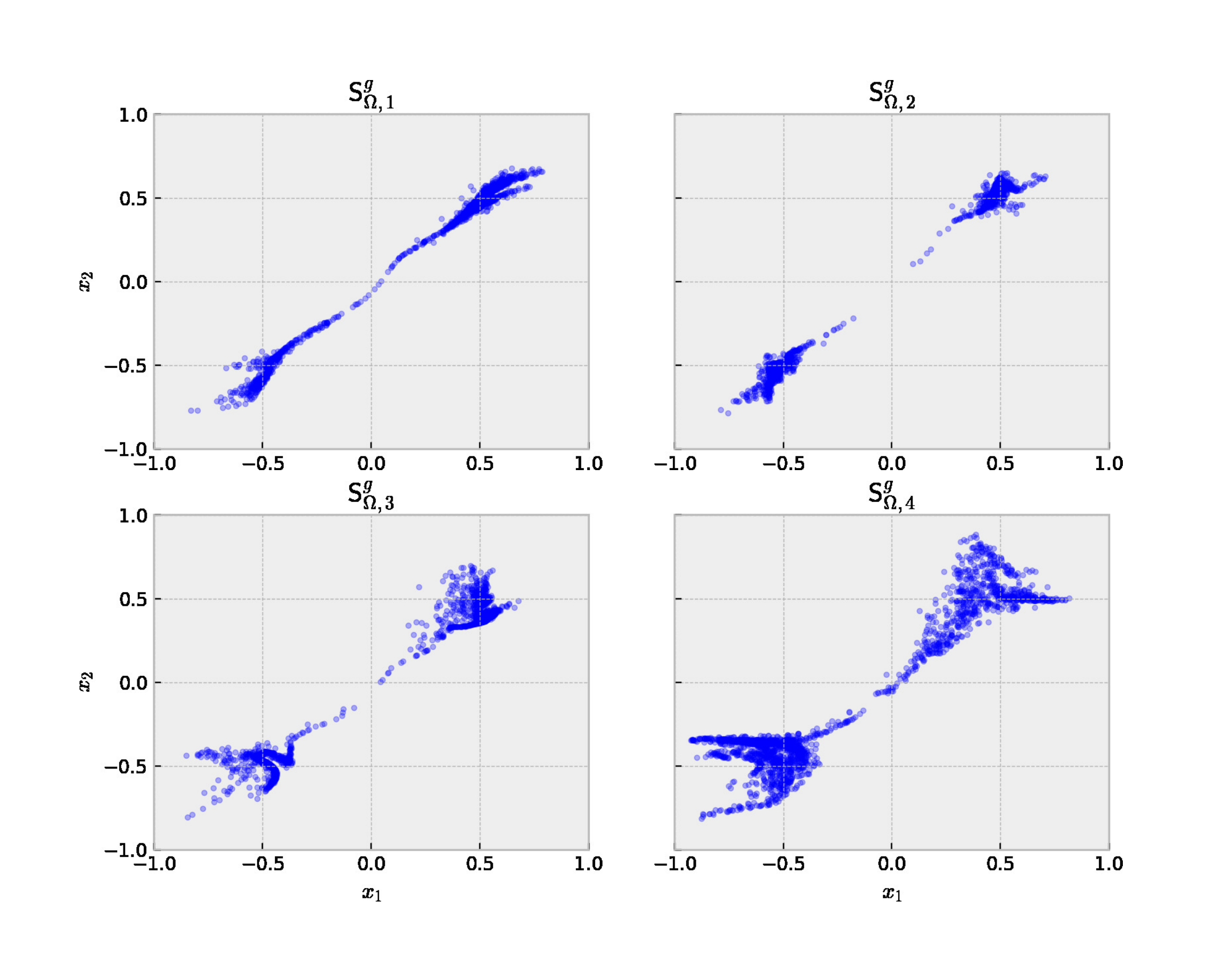}
	}
	\caption{The evolution of $\mathsf{S}^g_{\Omega, k}$ in DAS-G for the two-dimensional test problem with two peaks.}\label{fig:2peaks2d_dasg_resample}
\end{figure}

\subsection{High-dimensional linear test problems} \label{sec_numexp_hd_linear}
Next we consider the $d$-dimensional elliptic equation 
\begin{equation} \label{eq_highdim_linearpde}
	- \Delta u(\mb{x}) = s(\mb{x}), \quad  \mb{x}  \ \text{in} \ \Omega = [-1,1]^{d},
\end{equation}
with an exact solution
$$ u(\mb{x}) = \mathrm{e}^{-10 \norm{\mb{x}}{2}^2},$$ where the Dirichlet boundary condition on $\partial\Omega$ is given by the exact solution. We are interested in cases with a large $d>3$.
Note that the geometric properties of high-dimensional spaces are significantly different from our intuitions on low-dimensional ones, e.g., most of the volume of a high-dimensional cube is located around its corners \cite{vershynin2018high, blum2020foundations, wright2021high}. If we use uniform samples to generate $\mathsf{S}_{\Omega}$, most of the collocation points in $\mathsf{S}_{\Omega}$ are near the surface of the hypercube. Since the information of the exact solution is mainly from the neighborhood of the origin, most of the samples in $\mathsf{S}_{\Omega}$ may not contribute to training the neural network when $d$ is large enough. 

We choose a six-layer fully connected neural network $u(\mb{x};\Theta)$ with 64 neurons to approximate the solution. For KRnet, we set $K = 3$ and take $L = 6$ affine coupling layers, and two fully connected layers with $64$ neurons for each affine coupling layer. The number of epochs for training both $u(\mb{x};\Theta)$ and $p(\mb{x};\Theta_f)$ is set to $N_e = 3000$. The learning rate for ADAM optimizer is set to $0.0001$, and the batch size is set to $m = 5000$. The numbers of adaptivity iterations is set to $N_{\rm{adaptive}} = 5$. To measure the quality of approximation, we generate a tensor grid with $n_t^d$ points around the origin (in $[-0.1, 0.1]^d$) where $n_t$ is the number of nodes for each dimension. We define the relative error 
\begin{equation*} 
	\text{Relative error} = \frac{ \norm{ \mb{u}_{\mathsf{NN}} - \mb{u}}{2}}{\norm{\mb{u}}{2}},
\end{equation*}
where $\mb{u}_{\mathsf{NN}}$ and $\mb{u}$ denote two vectors whose elements are the function values of $u(\mb{x};\Theta)$ and $u(\mb{x})$ at the tensor grid respectively. 

We first investigate the relation between the error and the dimensionality $d$ when the uniform sampling strategy is employed. Figure \ref{fig:example_10d}(a) shows the relative errors in terms of a varying $d$ for a sample size $\vert \mathsf{S}_{\Omega} \vert = 2 \times 10^5$. To roughly match the number of grid points for different $d$, we set $n_t = 16, 6, 4, 3$ for $d = 4, 6, 8, 10$ respectively. It is seen that the relative error grows quickly to $\mathit{O}(1)$ as $d$ increases. However, as shown in Figure \ref{fig:example_10d}(b), all training losses are finally close to zero for $d = 4, 6, 8, 10$. This is consistent with the fact that in a high-dimensional space most of the uniform samples are located around the boundary, where the solution is close to zero. The optimizer is then in favor of the trivial solution since there are not sufficient samples to resolve the peak at the origin. This phenomenon demonstrates that the uniform sampling method may become less effective as $d$ increases and the convergence of the approximate solution is highly dependent on the choice of $\mathsf{S}_{\Omega}$ for a large $d$. 

Figure \ref{fig:exp10d_error_comparison} shows the relative errors for the uniform sampling strategy, the residual-based adaptive refinement (RAR) method proposed in \cite{lu2021deepxde}, DAS-R and DAS-G, where different numbers of samples $|\mathsf{S}_{\Omega}|$ are considered. For each $|\mathsf{S}_{\Omega}|$, we again take three runs with different random seeds for initialization and compute the mean error of the three runs as the final error. For the DAS-G strategy, the numbers of collocation points in $\mathsf{S}^g_{\Omega, k}$ ($k = 1,2,3,4$) are set to $n_r = 10^4, 2 \times 10^4, 3 \times 10^4, 4 \times 10^4$ for $|\mathsf{S}_{\Omega} |= 5 \times 10^4, 10^5, 1.5 \times 10^5, 2 \times 10^5$ respectively. For the uniform sampling strategy, we train the model with $1.5 \times 10^4$ epochs to match the total number of epochs of DAS methods. For the heuristic method RAR, the numbers of collocation points in $\mathsf{S}^g_{\Omega, k}$ ($k = 1,2,3,4$) are set to $n_r = 5 \times 10^3,  10^4, 1.5 \times 10^4, 2.5 \times 10^4$ for $|\mathsf{S}_{\Omega} |= 5 \times 10^4, 10^5, 1.5 \times 10^5, 2 \times 10^5$ respectively. From Figure \ref{fig:exp10d_error_comparison}, it can be seen that both DAS-G and DAS-R improve the accuracy significantly compared to the uniform sampling strategy and RAR. In addition, the error of DAS-G decreases slightly faster than that of DAS-R for this test problem. 
In Figure \ref{fig:exp10d_error_epoch} we compare the error evolution of different sampling strategies.  From the left plot of Figure \ref{fig:exp10d_error_epoch}, as the number of epochs increases, the errors of DAS-G and DAS-R decrease quickly, while the errors of RAR and the uniform sampling strategy do not decrease. This result suggests that for high-dimensional problems DAS methods are able to achieve a good approximation with a relatively small number of nonuniform samples while much more uniform samples are needed for the same accuracy.
The right plot of Figure \ref{fig:exp10d_error_epoch} shows the error of DAS-G at each adaptivity iteration step $k$. It is seen that the error drops dramatically after we refine the solution using $\mathsf{S}_{\Omega,1}$.

Figures  \ref{fig:exp10d_dasr_resample_67} and \ref{fig:exp10d_dasg_resample_67} show $3000$ samples from the training sets ($\vert \mathsf{S}_{\Omega} \vert = 2 \times 10^5$) DAS-R and DAS-G for the first four adaptivity iterations, where the components $x_6$ and $x_7$ are used for visualization. We have also checked the other components, and no significantly different results were found. For DAS-R, $3000$ samples are randomly chosen from $\mathsf{S}_{\Omega, k}$ ($k = 1,2,3,4$). For DAS-G, $3000$ samples for visualization are randomly selected from $\mathsf{S}^g_{\Omega, k}$ ($k = 1,2,3,4$). It is seen that the profile of $\mathsf{S}_{\Omega,k}$ is gradually flattened as $k$ increases, meaning the nonuniform samples are able to smooth the error profile which has a peak around the origin. As for DAS-G, the improvement takes a similar path. $\mathsf{S}_{\Omega,1}^g$ shows that the error profile has a peak around the origin. After the training set is augmented with $\mathsf{S}_{\Omega,1}$, the error profile becomes more flat as shown by the distribution of $\mathsf{S}_{\Omega,2}^g$. This is expected since more collocation points are added to the neighborhood of the origin which should reduce the error over there. Such a pattern is similar to what we have observed in Figure \ref{fig:2peaks2d_dasg_resample}. In Figure \ref{fig:exp10d_variance_comparison}, we compare the evolution of the variance of the residual for the training with $5 \times 10^4$ samples. We estimate the variance of residual using $59049$ grid points around the origin (these points are also used to compute the relative errors in the above discussion). It is clear that both DAS-R and DAS-G achieve the variance reduction significantly compared with RAR, which helps reduce the statistical error dramatically for a fixed sample size. Looking more closely, the variance of DAS-R has a transition between two consecutive adaptivity iterations, resulting in the oscillation of errors for DAS-R as observed in the left plot of Figure \ref{fig:exp10d_error_epoch}. From Figure \ref{fig:exp10d_variance_comparison}, it can be seen that DAS-G appears more robust than DAS-R for this test problem. We may adjust the communication pattern between the PDE model and the PDF model to reduce the  oscillations, which will be left for future study. 

\begin{figure}[ht]
	\centering
	\subfloat[][Error ]{\includegraphics[width=.42\textwidth]{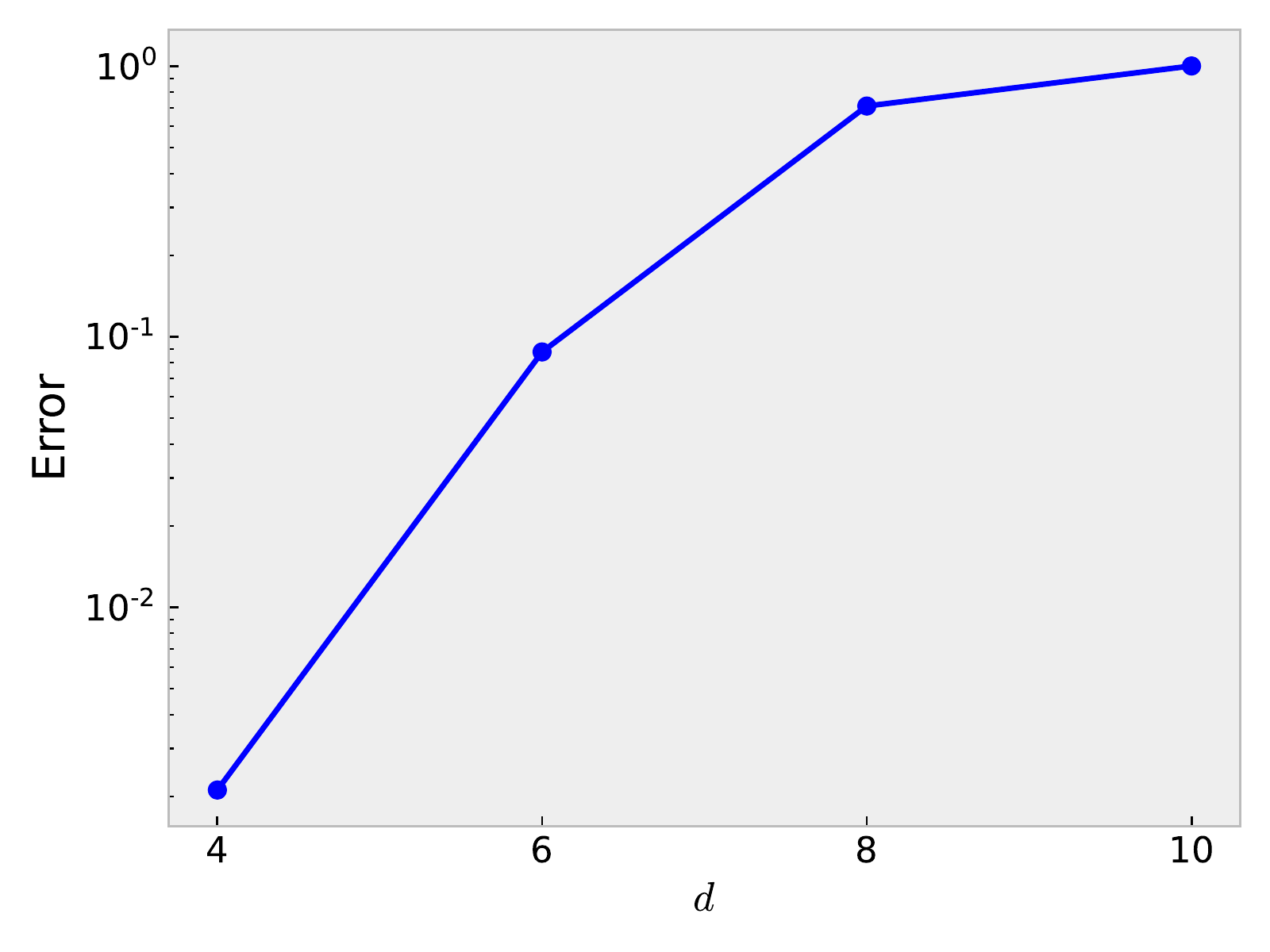}}\quad
	\subfloat[][Loss ]{\includegraphics[width=.42\textwidth]{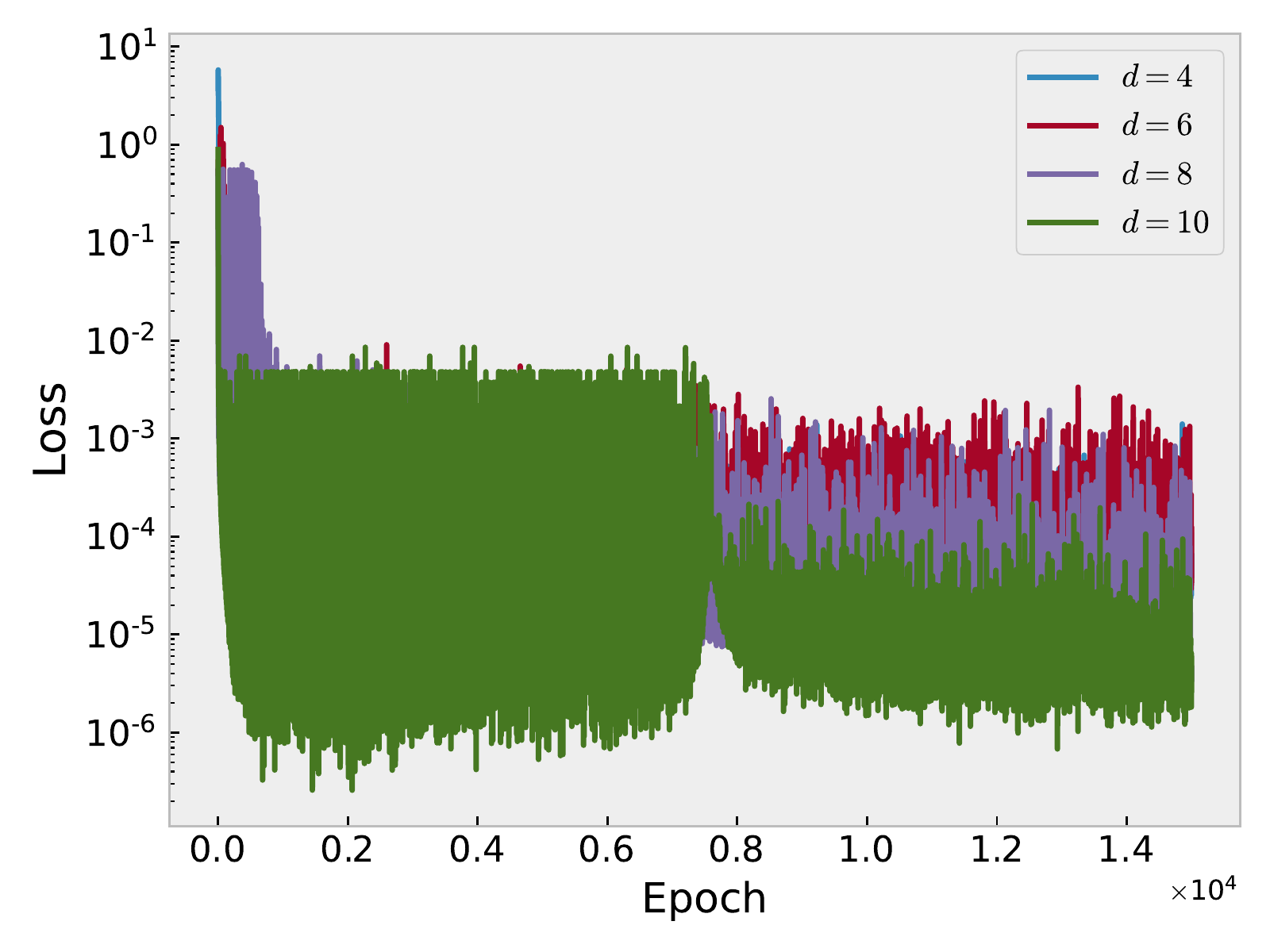}}
	\caption{The convergence behavior of high-dimensional PDEs with uniform sampling method. Loss is close to zero, but the error is still large for the ten-dimensional test problem. }
	\label{fig:example_10d}
\end{figure}

\begin{figure}
	\center{
		\includegraphics[width=0.48\textwidth]{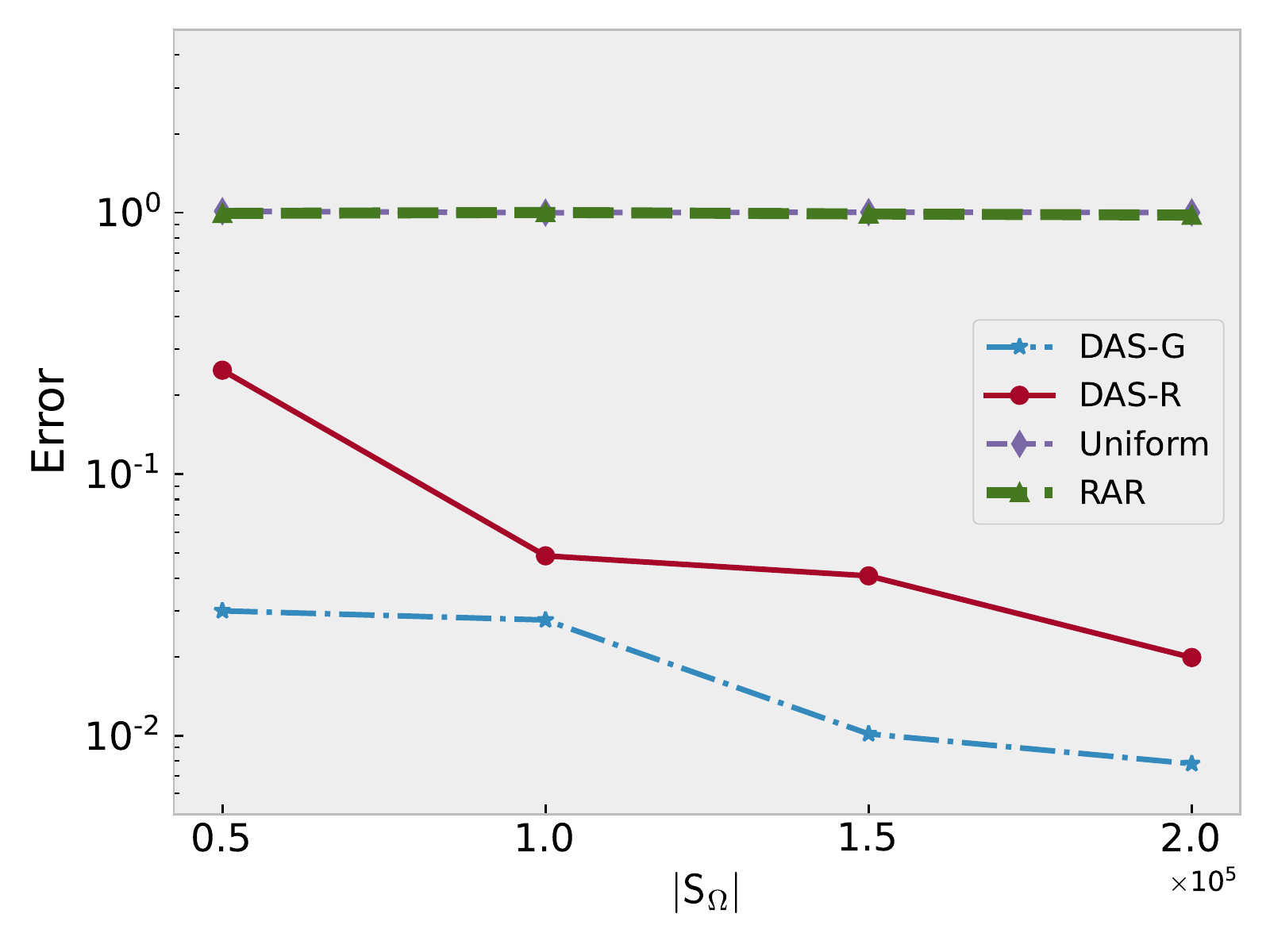}
	}
	\caption{The error w.r.t sample size $\vert \mathsf{S}_{\Omega} \vert$, ten-dimensional linear test problem.}\label{fig:exp10d_error_comparison}
\end{figure}

\begin{figure}
	\center{
		\includegraphics[width=0.42\textwidth]{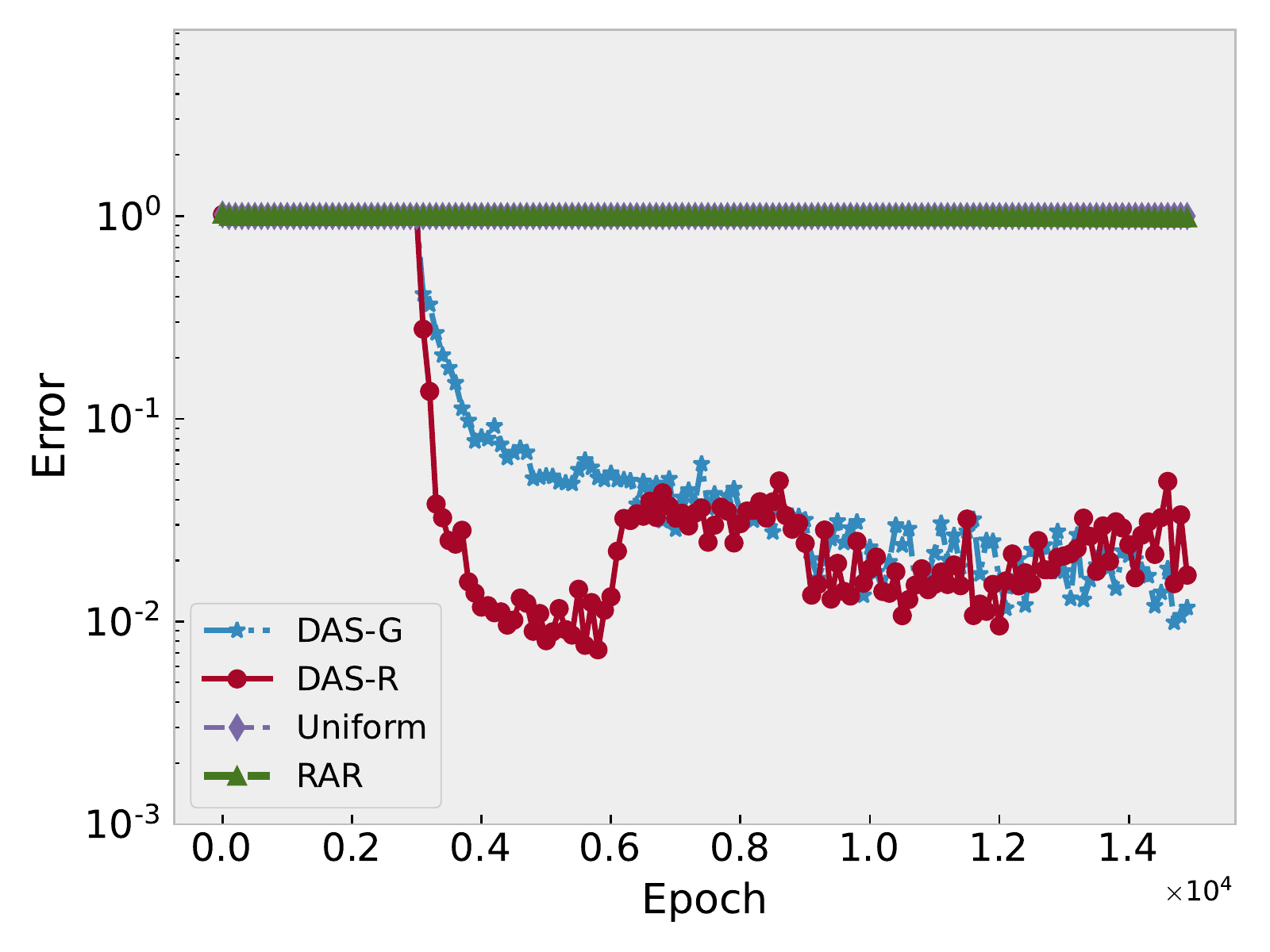}
		\includegraphics[width=0.46\textwidth]{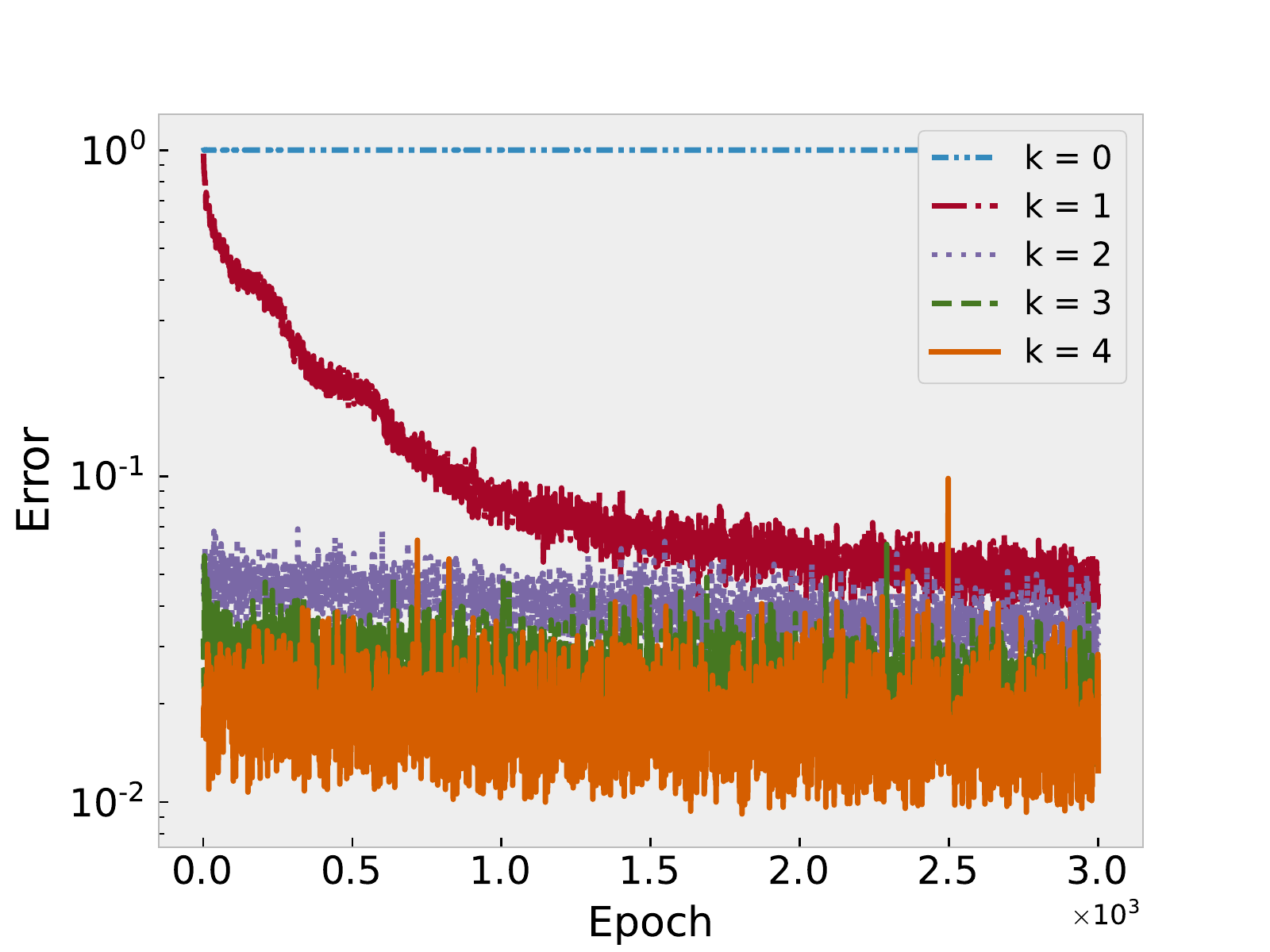}
	}
	\caption{The error evolution of different sampling strategies with $\vert \mathsf{S}_{\Omega} \vert = 2 \times 10^5$ and $d=10$ (high-dimensional linear test problem). Left: A comparison of DAS-G, DAS-R and the uniform sampling method; Right: The error evolution of DAS-G at different adaptivity iteration steps.}\label{fig:exp10d_error_epoch}
\end{figure}

\begin{figure}
	\center{
		\includegraphics[width=0.7\textwidth]{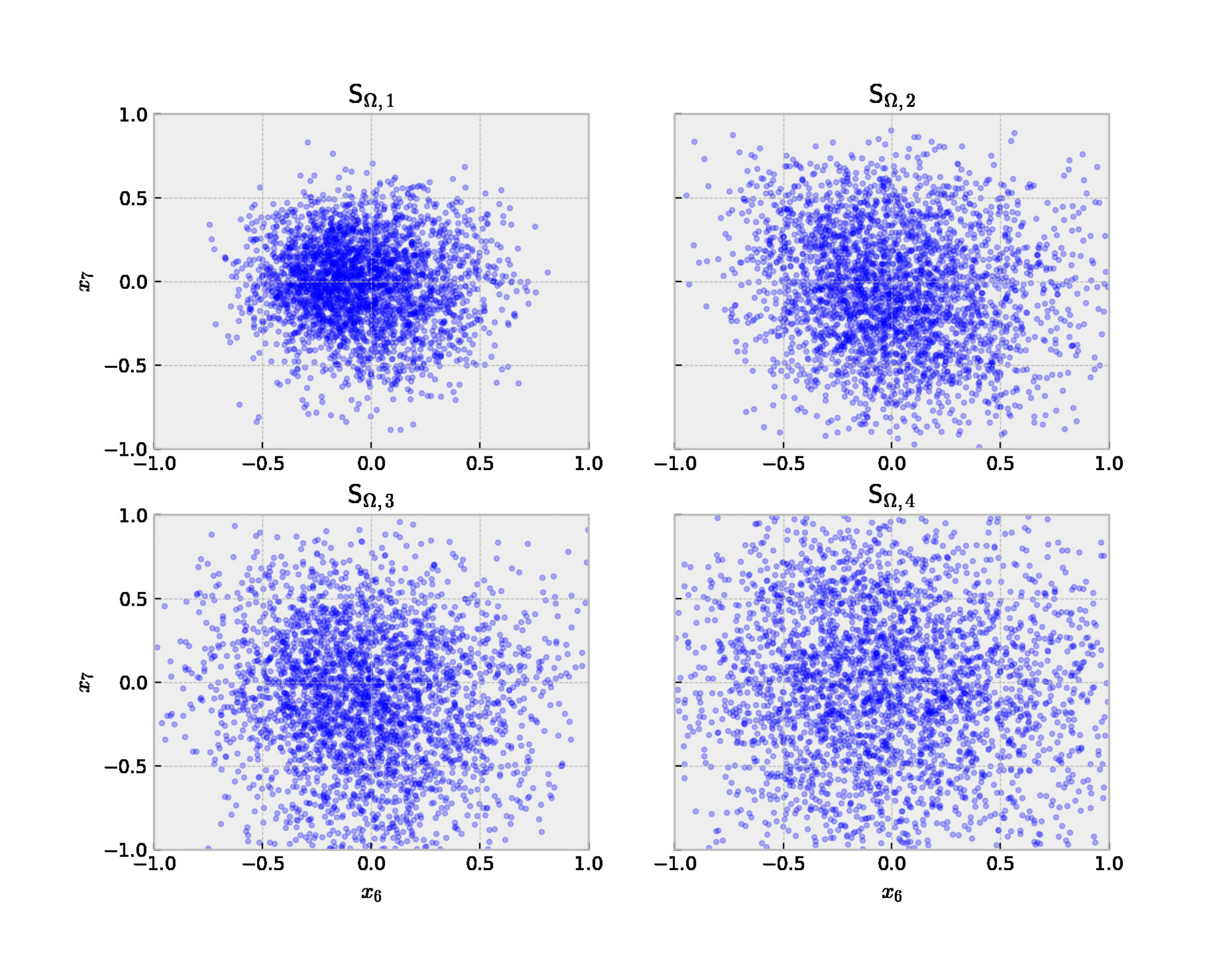}
	}
	\caption{The evolution of $\mathsf{S}_{\Omega, k}$ in DAS-R, ten-dimensional linear test problem.}\label{fig:exp10d_dasr_resample_67}
\end{figure}

\begin{figure}
	\center{
		\includegraphics[width=0.7\textwidth]{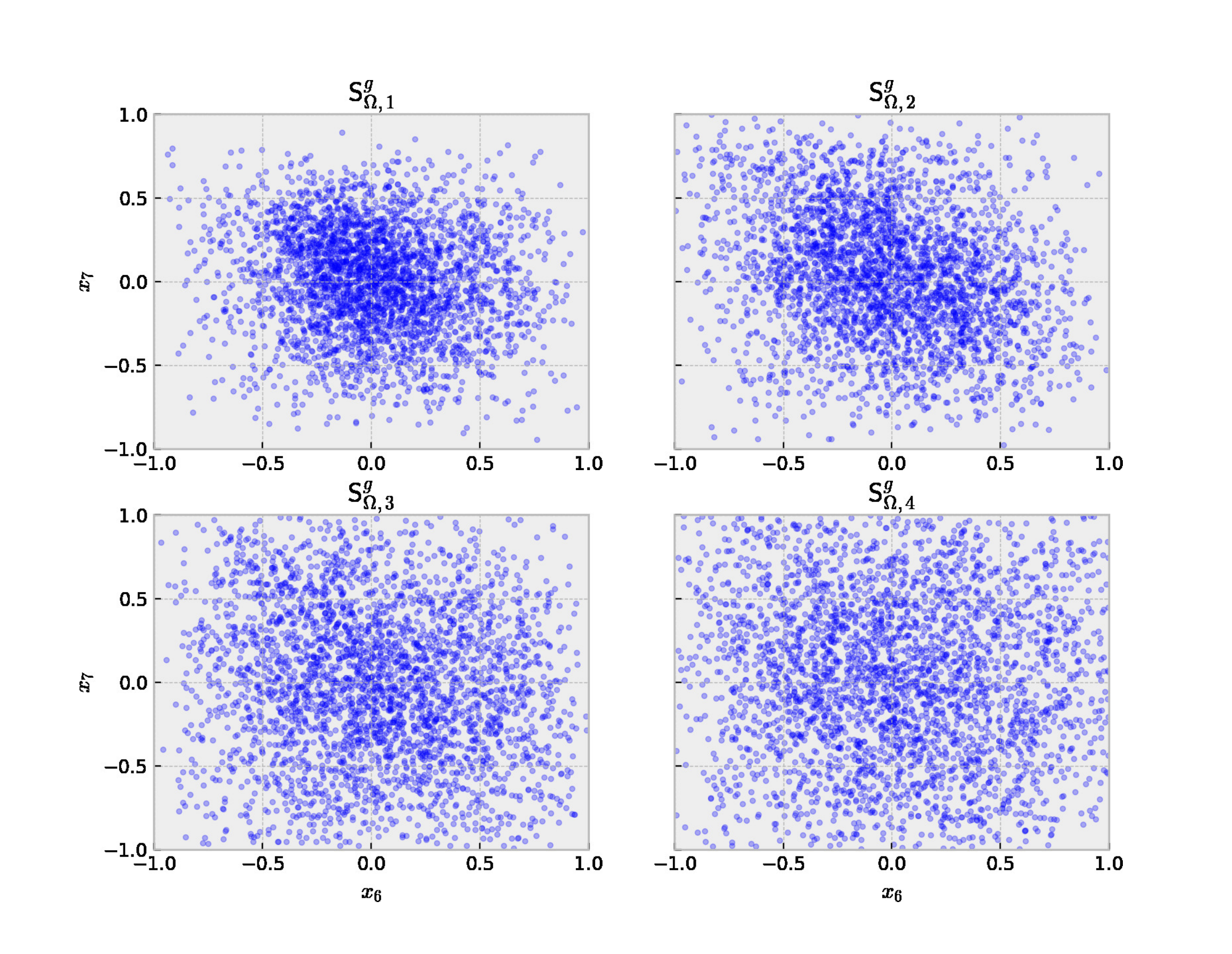}
	}
	\caption{The evolution of $\mathsf{S}^g_{\Omega, k}$ in DAS-G, ten-dimensional linear test problem.}\label{fig:exp10d_dasg_resample_67}
\end{figure}

\begin{figure}
	\center{
		\includegraphics[width=0.48\textwidth]{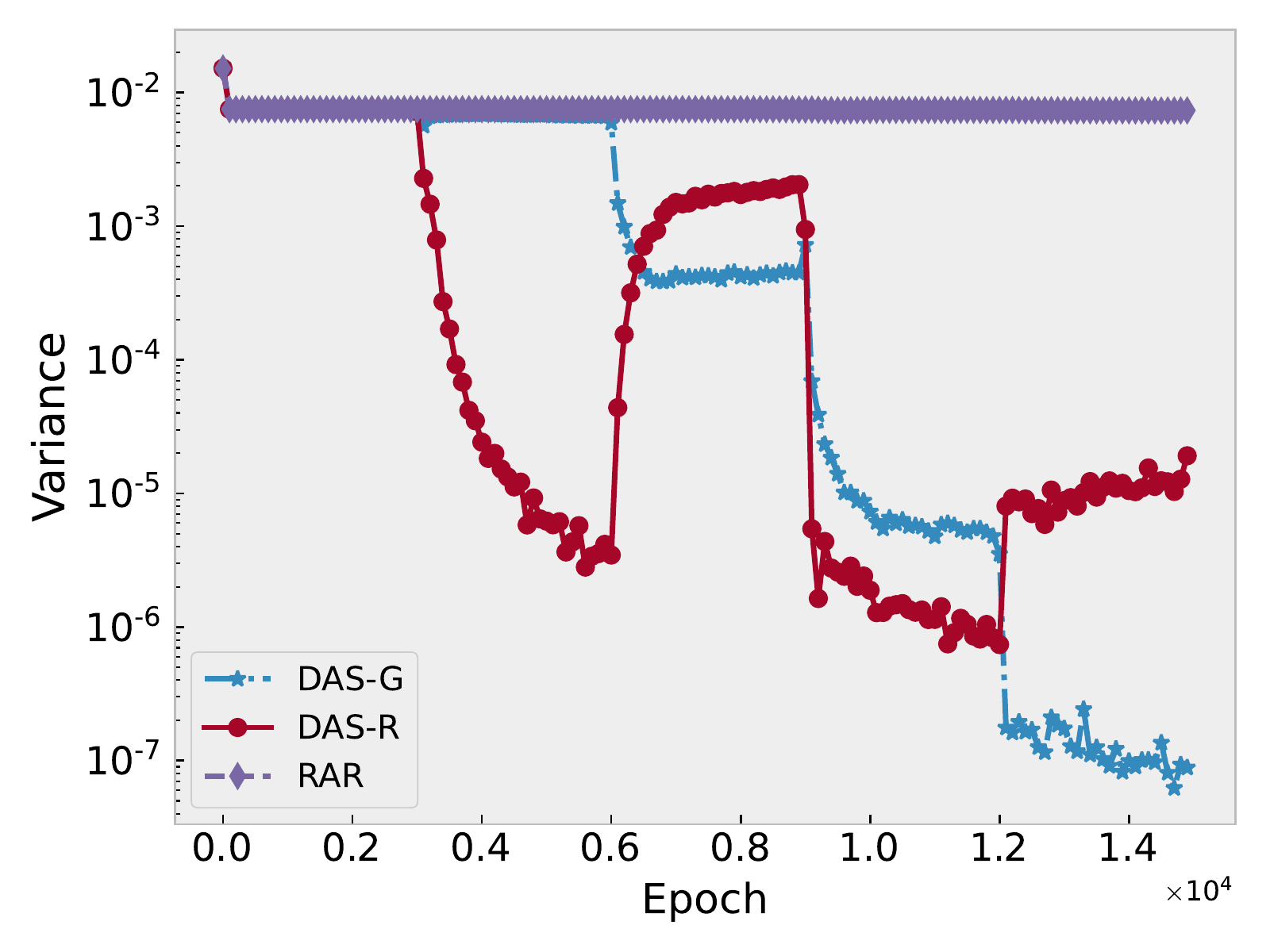}
	}
	\caption{The evolution for the variance of residual, ten-dimensional linear test problem.}\label{fig:exp10d_variance_comparison}
\end{figure}

\subsection{High-dimensional nonlinear test problem} \label{sec_numexp_hd_nonlinear}
In this part, the ten-dimensional nonlinear partial differential equation considered is 
\begin{equation}
	- \Delta u(\mb{x}) + u(\mb{x}) - u^3(\mb{x}) = s(\mb{x}), \quad  \mb{x}  \ \text{in} \ \Omega = [-1,1]^{10}.
\end{equation}
The exact solution is set to be the same as \eqref{eq_highdim_linearpde}, and the Dirichlet boundary condition on $\partial \Omega$ is given by the exact solution. The settings of $u(\mb{x};\Theta)$ and KRnet are the same as those in section \ref{sec_numexp_hd_linear}.

The observations are similar to those for the high-dimensional linear problem. Figure \ref{fig:nlexp10d_error_comparison} shows the relative errors for the uniform sampling strategy, RAR, DAS-R and DAS-G, where different numbers of samples $|\mathsf{S}_{\Omega}|$ are considered. Again, we take three runs with different random seeds for initialization and compute the mean error of the three runs as the final error for each $|\mathsf{S}_{\Omega}|$. For the DAS-G strategy, the numbers of collocation points in $\mathsf{S}^g_{\Omega, k}$ ($k = 1,2,3,4$) are set to the same as those in section \ref{sec_numexp_hd_linear}. For the uniform sampling strategy, we train the model with $1.5 \times 10^4$ epochs to match the total number of epochs of DAS methods. For the heuristic method RAR, the numbers of collocation points in $\mathsf{S}^g_{\Omega, k}$ ($k = 1,2,3,4$) are set to $n_r = 5 \times 10^3,  10^4, 1.5 \times 10^4, 2.5 \times 10^4$ for $|\mathsf{S}_{\Omega} |= 5 \times 10^4, 10^5, 1.5 \times 10^5, 2 \times 10^5$ respectively. Both DAS-G and DAS-R improve the accuracy significantly compared to the uniform sampling strategy and RAR. In Figure \ref{fig:nlexp10d_error_epoch} we compare the error evolution of different sampling strategies. Similar to the high-dimensional linear problem, the errors of DAS-G and DAS-R decrease quickly while the errors of the uniform sampling strategy and RAR do not decrease. The error behavior of DAS-G at each adaptivity iteration step $k$ is shown in the right plot of Figure \ref{fig:nlexp10d_error_epoch}. It is seen that the approximation is significantly improved when the adaptivity iteration step $k$ increases from $0$ to $1$. Figure  \ref{fig:nlexp10d_dasr_resample_67} and \ref{fig:nlexp10d_dasg_resample_67} show $3000$ samples from the training sets ($\vert \mathsf{S}_{\Omega} \vert = 2 \times 10^5$) DAS-R and DAS-G for the first four adaptivity iterations, where the components $x_6$ and $x_7$ are used for visualization. For DAS-R, $3000$ samples are randomly chosen from $\mathsf{S}_{\Omega, k}$ ($k = 1,2,3,4$). For DAS-G, $3000$ samples for visualization are randomly selected from $\mathsf{S}^g_{\Omega, k}$ ($k = 1,2,3,4$). Both DAS-R and DAS-G flatten the error profile through adaptive sampling as we have observed in Figures \ref{fig:exp10d_dasr_resample_67} and \ref{fig:exp10d_dasg_resample_67}. Figure \ref{fig:nlexp10d_variance_comparison} shows the evolution of the variance of the residual for DAS-R, DAS-G and RAR. The behavior is similar to that in Figure  \ref{fig:exp10d_variance_comparison}.

\begin{figure}
	\center{
		\includegraphics[width=0.48\textwidth]{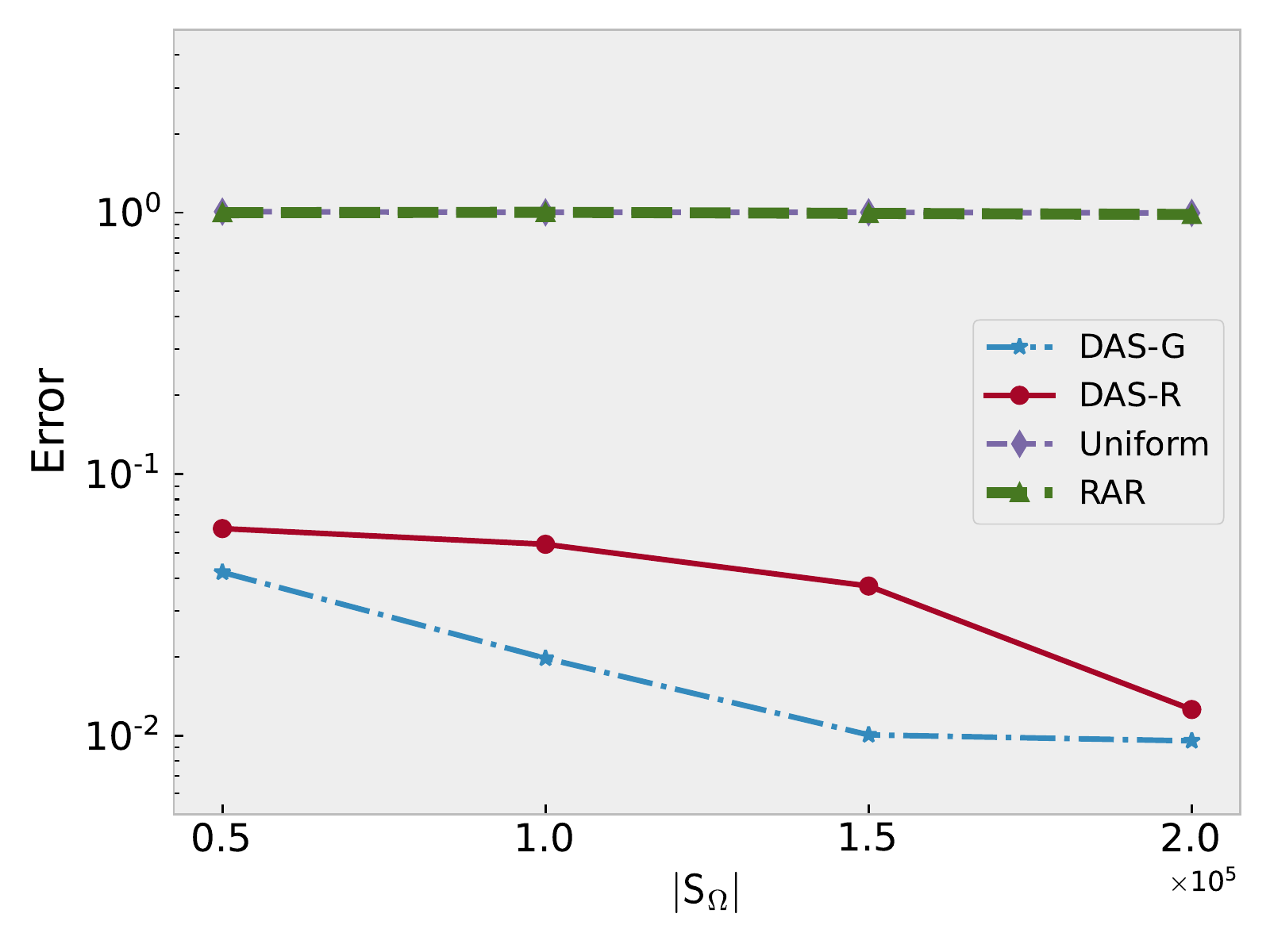}
	}
	\caption{The error w.r.t sample size $\vert \mathsf{S}_{\Omega} \vert$, ten-dimensional nonlinear test problem.}\label{fig:nlexp10d_error_comparison}
\end{figure}

\begin{figure}
	\center{
		\includegraphics[width=0.42\textwidth]{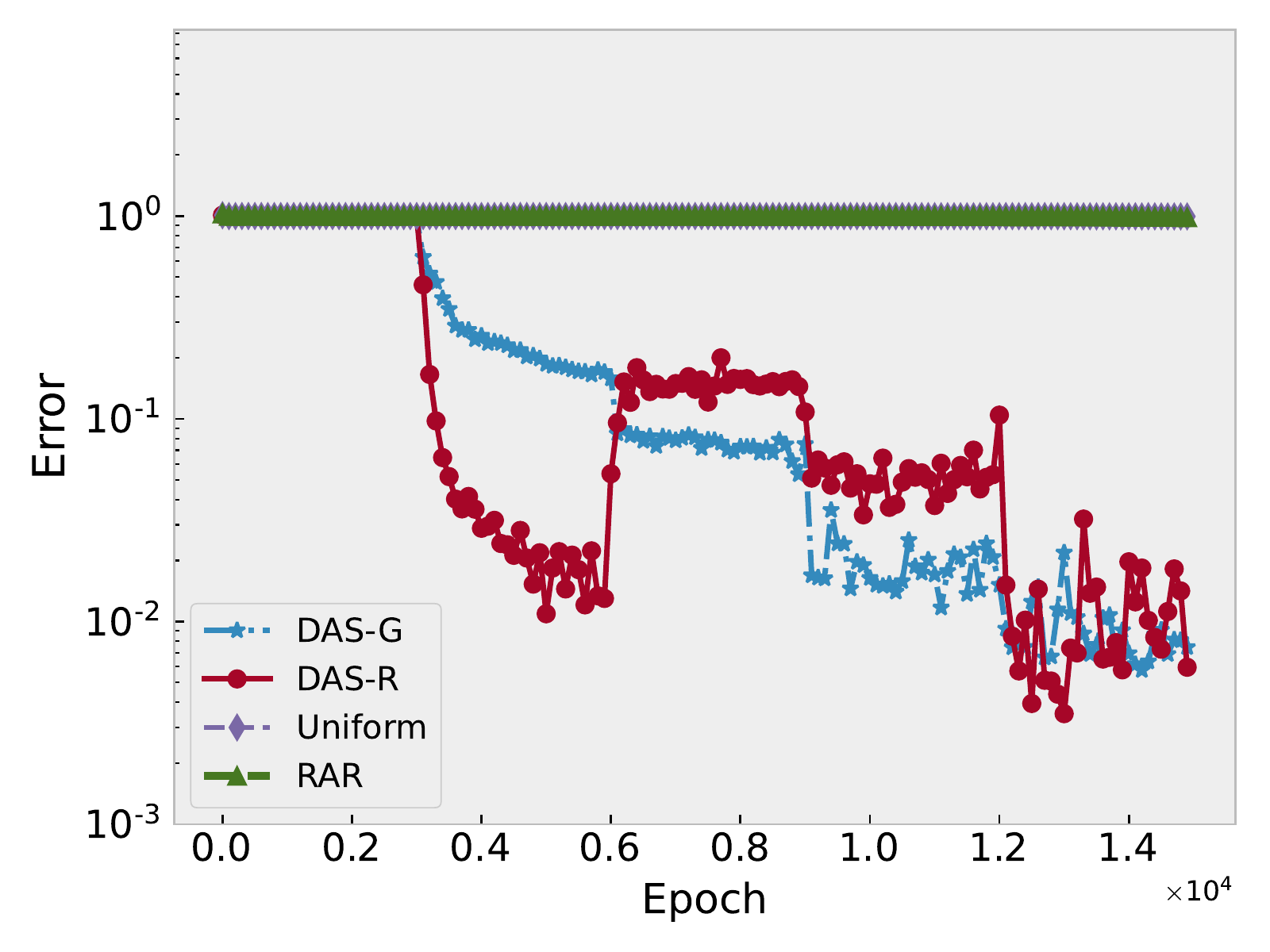}
		\includegraphics[width=0.46\textwidth]{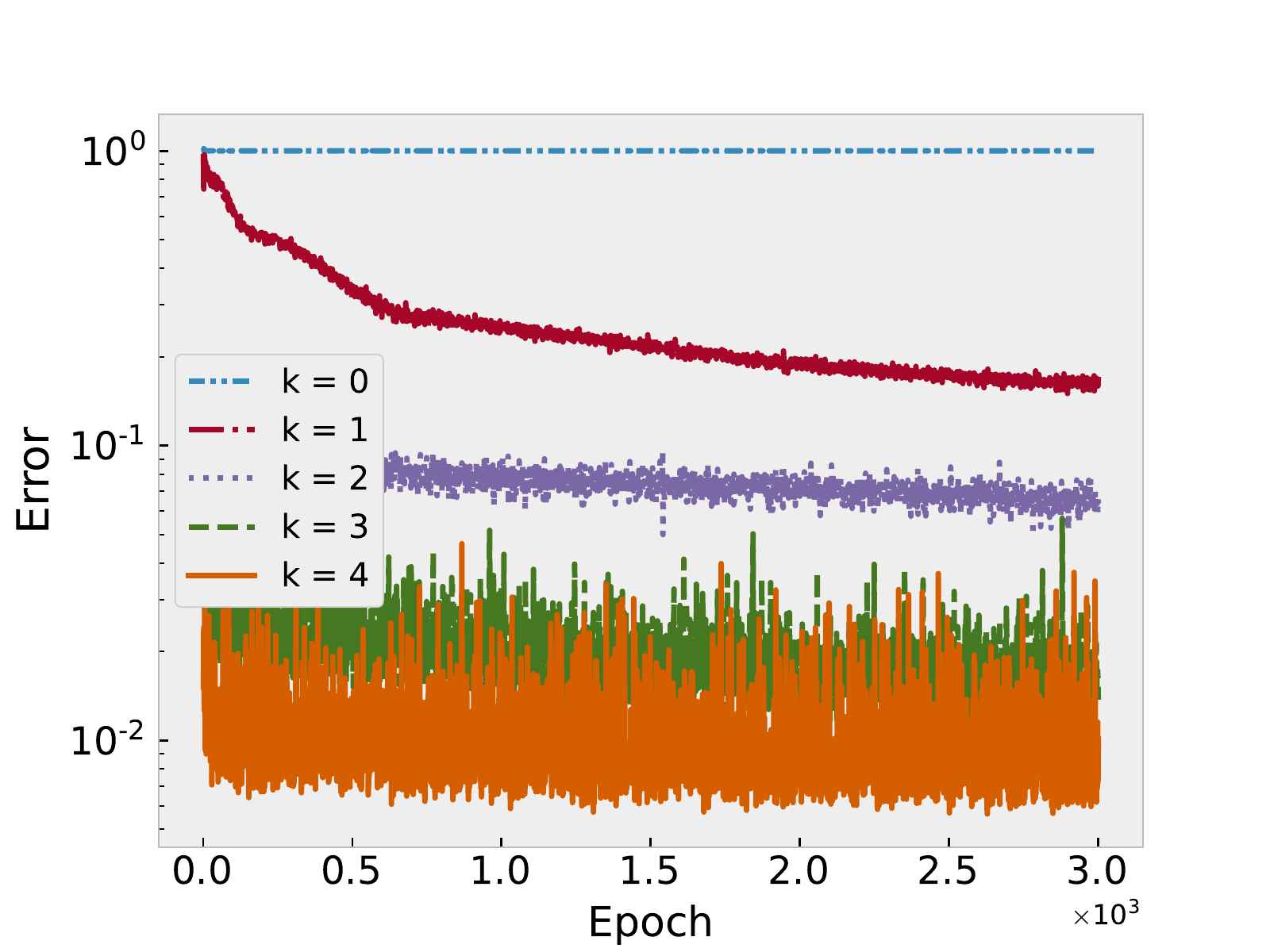}
	}
	\caption{The error evolution of different sampling strategies with $\vert \mathsf{S}_{\Omega} \vert = 2 \times 10^5$ and $d=10$ (high-dimensional nonlinear test problem). Left: A comparison of DAS-G, DAS-R and the uniform sampling method; Right: The error evolution of DAS-G at different adaptivity iteration steps.}\label{fig:nlexp10d_error_epoch}
\end{figure}

\begin{figure}
	\center{
		\includegraphics[width=0.7\textwidth]{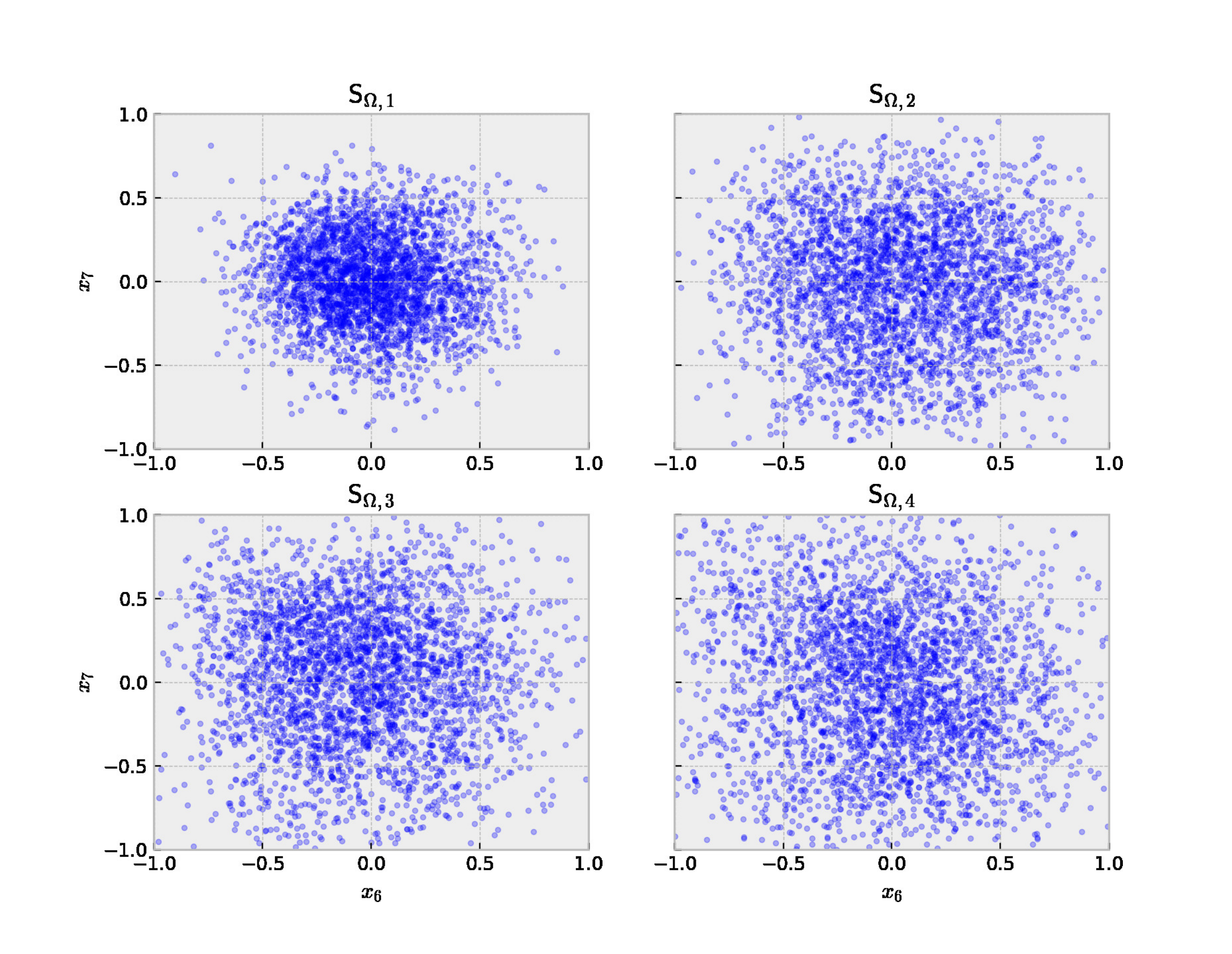}
	}
	\caption{The evolution of $\mathsf{S}_{\Omega, k}$ in DAS-R, ten-dimensional nonlinear test problem.}\label{fig:nlexp10d_dasr_resample_67}
\end{figure}

\begin{figure}
	\center{			\includegraphics[width=0.7\textwidth]{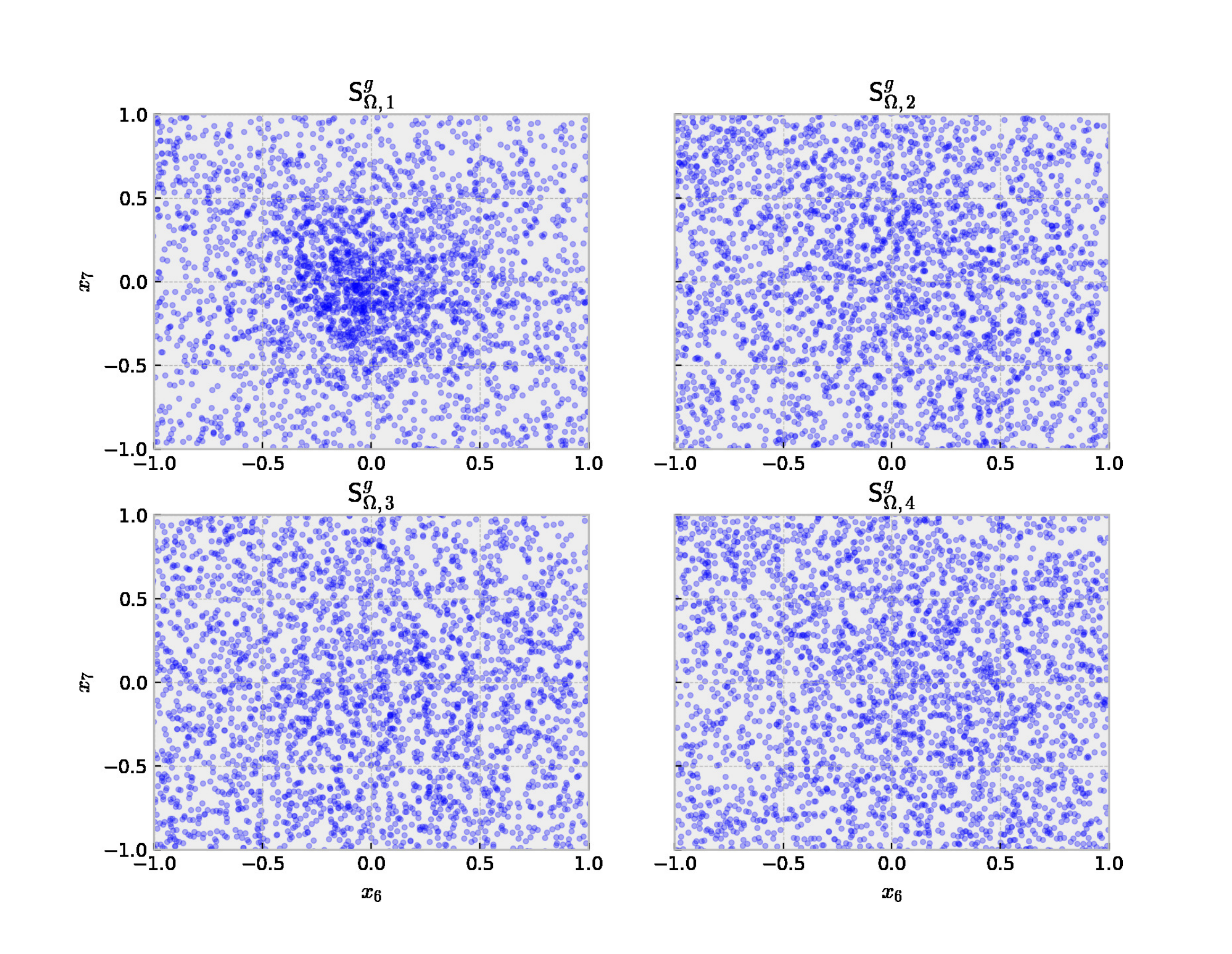}
	}
	\caption{The evolution of $\mathsf{S}^g_{\Omega, k}$ in DAS-G, ten-dimensional nonlinear test problem.}\label{fig:nlexp10d_dasg_resample_67}
\end{figure}

\begin{figure}
	\center{
		\includegraphics[width=0.48\textwidth]{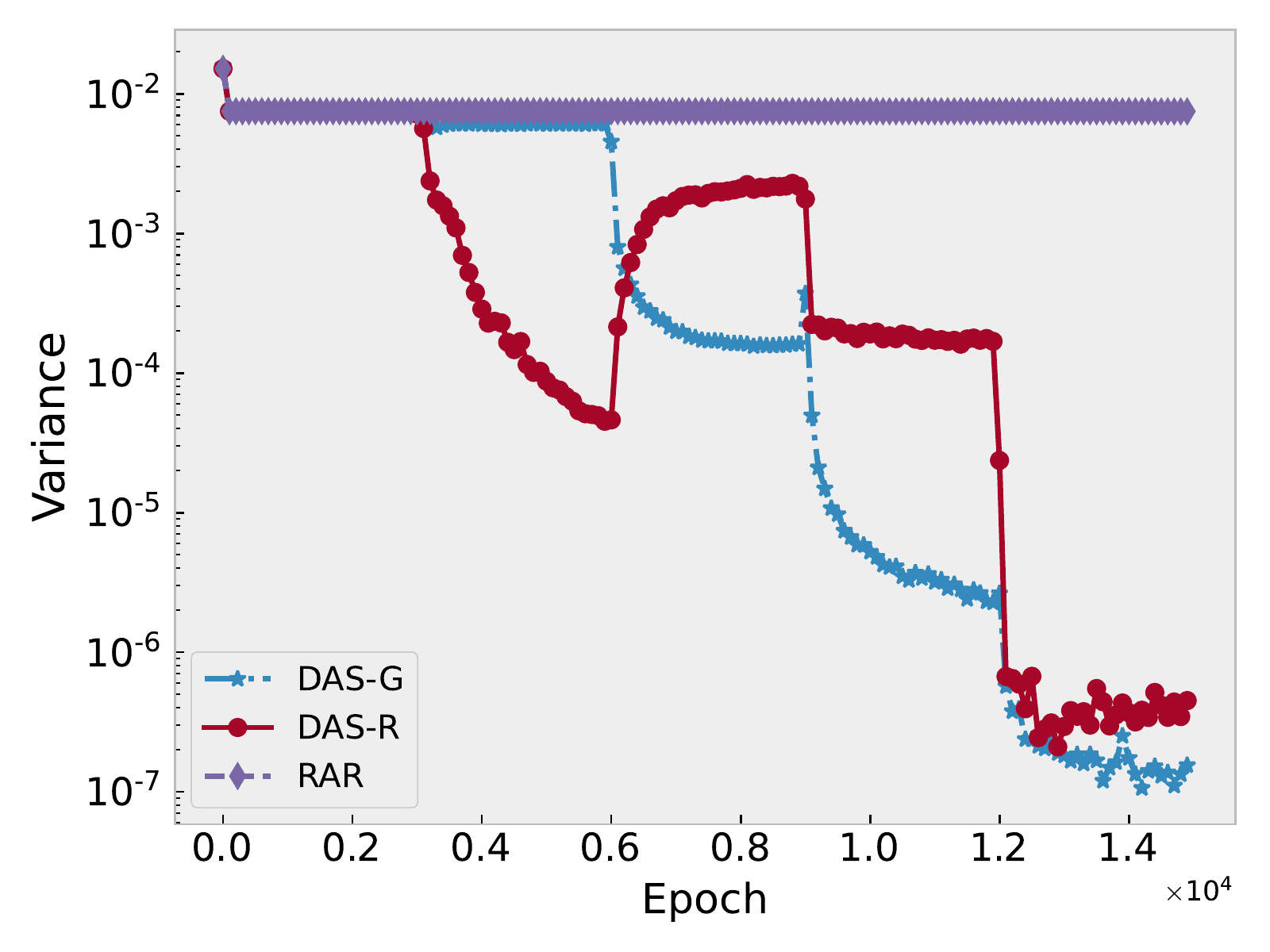}
	}
	\caption{The evolution for the variance of residual, ten-dimensional nonlinear test problem.}\label{fig:nlexp10d_variance_comparison}
\end{figure}

\section{Conclusion}\label{sec_conclusion}
	In this paper we have developed a deep adaptive sampling (DAS) method and coupled it with physics-informed neural networks (PINNs) to improve the neural network approximation of PDEs iteratively. The key idea of DAS is to employ a deep generative model to generate collocation points that are consistent with the distribution induced by an appropriate error indicator function. In this way, the training set is refined according to the regularity of the PDE solution, which follows the similar principle of  adaptive mesh refinement of classical numerical methods. Numerical experiments have shown that the DAS method is able to significantly improve the accuracy for the approximation of low regularity problems especially when the dimensionality is relatively large. The proposed DAS method provides a very general and flexible framework for an adaptive learning strategy.  There are several possible ways to further improve it. First, DAS consists of two DNN-based models: one model serves as an approximator for the PDE solution and the other one serves as an error indicator for the selection of collocation points. Both models can be chosen in terms of a certain criterion. In this work, we use a regular DNN for PDE approximation and KRnet for density approximation and sample generation.  Second, the underlying distribution for the training set can be problem dependent. In this work, we choose the residual-induced distribution. We may also use the gradient of the approximation solution to define an indicator distribution. In \cite{tang2021adaptive}, we employ KRnet to approximate the Fokker-Planck equation, where the collocation points are sampled from the approximate solution. Third, the DAS method is not limited to steady-state PDE problems. We may employ the DAS method on the space-time domain to refine the training set for the approximation of time-dependent problems. Last but not least, the current training process can also be improved. Although the current DAS methods work well enough to demonstrate the effectiveness of the algorithm, many questions remain open, e.g., what is the optimal way for the two deep models to communicate and what is the optimal sample size for $\mathsf{S}_{\Omega,k}^g$. Research on these issues will be reported in forthcoming papers. 
	
	\bigskip
	\textbf{Acknowledgments:}
	K. Tang has been supported by the China Postdoctoral 
	Science Foundation grant 2022M711730. X. Wan has been supported by NSF grant DMS-1913163.
	C. Yang has been supported by NSFC grant 12131002.
	%
	%
	\bibliography{tang}

	\appendix
	\begin{appendices}
		\section{Proof of Lemma \ref{lem:fn_stat_err}}
		\begin{proof}
		We first introduce the following lemma:
		\begin{lem}\label{lem:I_delta_A}
			Consider a perturbed identity matrix $\mathbf{I}+\delta\mathbf{A}$ with  $\norm{\delta\mathbf{A}}{2} < 1$. We have
			\begin{equation}
				\norm{(\mathbf{I}+\delta\mathbf{A})^{-1}}{2} \leq\frac{1}{1- \norm{\delta\mathbf{A}}{2}}.
			\end{equation}
		\end{lem}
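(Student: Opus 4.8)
The plan is to establish the bound through the Neumann series expansion of $(\mathbf{I}+\delta\mathbf{A})^{-1}$. First I would note that the hypothesis $\norm{\delta\mathbf{A}}{2}<1$ makes the scalar geometric series $\sum_{k=0}^{\infty}\norm{\delta\mathbf{A}}{2}^{k}$ convergent, so by submultiplicativity of the operator norm the matrix series $\sum_{k=0}^{\infty}(-\delta\mathbf{A})^{k}$ is absolutely convergent and defines a matrix $\mathbf{B}$. A telescoping calculation gives $(\mathbf{I}+\delta\mathbf{A})\mathbf{B}=\mathbf{B}(\mathbf{I}+\delta\mathbf{A})=\mathbf{I}$, so $\mathbf{I}+\delta\mathbf{A}$ is invertible with $\mathbf{B}=(\mathbf{I}+\delta\mathbf{A})^{-1}$. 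Then the triangle inequality and submultiplicativity yield
\[
\norm{(\mathbf{I}+\delta\mathbf{A})^{-1}}{2}=\norm{\sum_{k=0}^{\infty}(-\delta\mathbf{A})^{k}}{2}\leq\sum_{k=0}^{\infty}\norm{\delta\mathbf{A}}{2}^{k}=\frac{1}{1-\norm{\delta\mathbf{A}}{2}},
\]
which is the claimed estimate.

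An alternative route, which I would mention since it fits the finite-dimensional matrix setting of Lemma~\ref{lem:fn_stat_err} directly, avoids series entirely. For any vector $\mathbf{v}$ one has $\norm{(\mathbf{I}+\delta\mathbf{A})\mathbf{v}}{2}\geq\norm{\mathbf{v}}{2}-\norm{\delta\mathbf{A}\mathbf{v}}{2}\geq(1-\norm{\delta\mathbf{A}}{2})\norm{\mathbf{v}}{2}$, so $\mathbf{I}+\delta\mathbf{A}$ has trivial kernel and is therefore invertible; writing $\mathbf{w}=(\mathbf{I}+\delta\mathbf{A})\mathbf{v}$ gives $\norm{(\mathbf{I}+\delta\mathbf{A})^{-1}\mathbf{w}}{2}\leq(1-\norm{\delta\mathbf{A}}{2})^{-1}\norm{\mathbf{w}}{2}$ for all $\mathbf{w}$, which is the operator-norm bound.

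There is no real obstacle here: the only point that deserves a word of care is justifying invertibility of $\mathbf{I}+\delta\mathbf{A}$ before one writes its inverse, and this is handled either by convergence of the Neumann series or by the injectivity estimate above. The remainder is the standard geometric-series bound, so I expect the proof to be a few lines.
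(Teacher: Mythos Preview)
Your proposal is correct. Your \emph{alternative route} is essentially the paper's own argument: the paper first obtains the lower bound $\norm{(\mathbf{I}+\delta\mathbf{A})\mathbf{v}}{2}\geq(1-\norm{\delta\mathbf{A}}{2})\norm{\mathbf{v}}{2}$ to establish invertibility, and then derives the norm bound by writing $1=\norm{(\mathbf{I}+\delta\mathbf{A})^{-1}(\mathbf{I}+\delta\mathbf{A})}{2}\geq\norm{(\mathbf{I}+\delta\mathbf{A})^{-1}}{2}(1-\norm{\delta\mathbf{A}}{2})$, which is just a repackaging of your substitution $\mathbf{w}=(\mathbf{I}+\delta\mathbf{A})\mathbf{v}$. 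Your \emph{primary} Neumann-series argument is a genuinely different (and equally standard) route: it has the mild advantage of producing an explicit formula for the inverse and working verbatim in any Banach algebra, whereas the paper's injectivity argument is shorter and stays entirely within finite-dimensional linear algebra without needing to discuss convergence of an infinite series. Either version is perfectly adequate here.
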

		\begin{proof}
			For any $\mb{x}\neq 0$, we have
			\[
			\norm{(\mathbf{I}+\delta\mathbf{A})\bx}{2} \geq \norm{\mb{x}}{2} - \norm{\delta\mathbf{A}}{2} \norm{\bx}{2} = (1-\norm{\delta\mathbf{A}}{2}) \norm{\mb{x}}{2} > 0,
			\]
			which implies that $\mathbf{I}+\delta\mathbf{A}$ is nonsingular. We then have
			\begin{align*}
				1 = \norm{(\mathbf{I}+\delta\mathbf{A})^{-1}(\mathbf{I}+\delta\mathbf{A})}{2} &= \norm{(\mathbf{I}+\delta\mathbf{A})^{-1}+(\mathbf{I}+\delta\mathbf{A})^{-1}\delta\mathbf{A}}{2}\\
				&\geq \norm{(\mathbf{I}+\delta\mathbf{A})^{-1}}{2} - \norm{(\mathbf{I}+\delta\mathbf{A})^{-1}}{2} \norm{\delta\mathbf{A}}{2},
			\end{align*}
			which yields the conclusion.
		\end{proof}
		
		Assume that $m_V^*(\mb{x})=(\mb{v}^*)^{\mathsf{T}}\mb{q}(\mb{x})$ and $m_{\hat{\mb{v}}^*}(\mb{x})=(\hat{\mb{v}}^*)^{\mathsf{T}}\mb{q}(\mb{x})$, where $\mb{q}(\mb{x})=[q_1(\mb{x}),\ldots,q_n(\mb{x})]^{\mathsf{T}}$ includes the basis functions in $V=\mathrm{span}\{q_i(\mb{x})\}_{i=1}^n$ and the vectors $\mb{v}^*$ and $\hat{\mb{v}}^*$ include the coefficients. It is easy to see that $\mb{v}^*$ and $\hat{\mb{v}}^*$ satisfy the following linear systems respectively
		\begin{equation}\label{eqn:fa_linear_sys}
			\mathbf{A}\mb{v}^*=\mb{b},\quad\hat{\mathbf{A}}\hat{\mb{v}}^*=\hat{\mb{b}},
		\end{equation}
		where
		\begin{align*}
			\hat{a}_{ij}&=\frac{1}{N}\sum_{k=1}^Nq_i(\bx^{(k)})q_j(\bx^{(k)})\approx\langle q_i,q_j\rangle_{\rho}=a_{ij},\\
			\hat{b}_i&=\frac{1}{N}\sum_{k=1}^Nq_i(\bx^{(k)}) h(\bx^{(k)})\approx\langle q_i, h\rangle_{\rho}=b_i,
		\end{align*}
		and $\langle q_i,q_j\rangle_\rho=\int_Dq_i(\mb{x})q_j(\mb{x})\rho(\mb{x})d\mb{x}$ indicates the inner product of $q_i$ and $q_j$. We rewrite the linear system for $\hat{\mb{v}}^*$ as
		\begin{equation}\label{eqn:fa_perturbed}
			(\mathbf{A}+\delta\mathbf{A})\hat{\mb{v}}^*=\mb{b}+\delta\mb{b},
		\end{equation}
		where $\delta{\mathbf{A}}=\hat{\mathbf{A}}-\mathbf{A}$ and $\delta\mb{b}=\hat{\mb{b}}-\mb{b}$. 
		Let $ \mathscr{B}=\{q_i(\mb{x})\}_{i=1}^n\cup\{h(\mb{x})\}$. Since both $\{q_{i}\}_{i=1}^n$ and $h(\mb{x})$ are continuous on a compact support, we may assume that $|m_1(\mb{x})m_2(\mb{x})|\leq M$ for any $m_1,m_2 \in \mathscr{B}$ and any $\mb{x}\in D$, where $0<M<\infty$ is a constant. Using the  Heoffding bound, we have for any $\delta>0$ with probability at least $1-2\delta$, 
		\begin{equation}
			\left|\frac{1}{N}\sum_{k=1}^N m_1(\bx^{(k)})m_2(\bx^{(k)})-\langle m_1,m_2\rangle_{\rho}\right|\leq \sqrt{\frac{2M^2\ln\delta^{-1}}{N}},
		\end{equation} 
		for any $m_1,m_2 \in \mathscr{B}$. This means with probability at least $1-2\delta$ we have
		\begin{equation}\label{eqn:fa_tail}
			\norm{\delta\mathbf{A}}{2} \leq \norm{\delta\mathbf{A}}{F} \leq n\sqrt{\frac{2M^2\ln\delta^{-1}}{N}}, \quad \norm{\delta\bb}{2} \leq \sqrt{n}\sqrt{\frac{2M^2\ln\delta^{-1}}{N}},
		\end{equation}
		where $\norm{\cdot}{F}$ indicates the Frobenius norm. Let $\delta\mb{v}^*=\hat{\mb{v}}^*-\mb{v}^*$. It is seen that $\norm{\delta\mathbf{A}}{2} \rightarrow 0$ as $N \rightarrow \infty$. Assume that $N$ is large enough such that $\norm{\delta\mathbf{A}}{2} \leq (1-r)$ with $0<r<1$, in other words, $(1 - \norm{\delta\mathbf{A}}{2} )^{-1}\leq r^{-1}$.  Since $q_i(\mb{x})$ are orthonormal, we have $\mathbf{A}=\mathbf{I}$. From equations \eqref{eqn:fa_linear_sys} and \eqref{eqn:fa_tail}, we have
		\[
		\delta\mb{v}^*=(\mathbf{I}+\delta\mathbf{A})^{-1}(\delta\mb{b}-\delta\mathbf{A}\mb{v}^*),
		\]
		to which we apply Lemma \ref{lem:I_delta_A} and the bounds in equation \eqref{eqn:fa_perturbed} and obtain
		\begin{equation}\label{eqn:fa_bound_dv}
			\norm{\delta\mb{v}^*}{2} \leq r^{-1}(\norm{\delta \mb{b}}{2} + \norm{\delta \mathbf{A}}{2} \norm{\mb{v}^*}{2}) \leq r^{-1}(\sqrt{n} + n \norm{\mb{v}^*}{2})\sqrt{\frac{2M^2\ln\delta^{-1}}{N}}.
		\end{equation}
		Using the Pythagorean theorem, we have
		\begin{align*}
			\|m_{\hat{\mb{v}}^*} - h\|_\rho^2 &= \|m_{\hat{\mb{v}}^*} - m^*_V\|_\rho^2+\|h - m^*_V\|_\rho^2\\
			&=\|\delta\mb{v}^* \|^2_2 + \|h - m^*\|_\rho^2\\
			&\leq (\|\delta\mb{v}^*\|_2+\|h - m^*_V\|_\rho)^2,
		\end{align*}
		which yields that
		\begin{equation}
			\label{eqn:fa_err_linear_sys}
			\|\hat{m}_{\hat{\mb{v}}^*} - h\|_\rho \leq \|\delta\mb{v}^*\|_2+\|h-m^*_V\|_\rho.
		\end{equation}
		Combing equations \eqref{eqn:fa_bound_dv} and \eqref{eqn:fa_err_linear_sys}, we reach the conclusion.
		
	 \end{proof}
 
		\section{Proof of Lemma \ref{lem:kl_is}}
		\begin{proof}
		Let 
		\[
		\hat{r}(\mb{x})=\left\{
		\begin{array}{rl}
			r(\mb{x}),&\textrm{ if }|r^2/p-\mu|\leq a;\\
			0,&\textrm{ otherwise},
		\end{array}
		\right.
		\]
		where $a>0$. We consider
		\begin{equation}
			\left|Q_p[r^2]-\mathbb{E}[r^2]\right|\leq \left|Q_p(r^2)-Q_p(\hat{r}^2)\right|+\left|Q_p(\hat{r}^2)-\mathbb{E}[\hat{r}^2]\right|+\left|\mathbb{E}[\hat{r}^2]-\mathbb{E}[r^2]\right|=I_1+I_2+I_3.
		\end{equation}
		The first term $I_1$ on the right-hand side is bounded as
		\begin{align}
			\mathbb{E}_p\left|Q_p(r^2)-Q_p(\hat{r}^2)\right|& = \mathbb{E}_p\left|r^2(\mb{X})p^{-1}(\mb{X})-\hat{r}^2(\mb{X})p^{-1}(\mb{X})\right|\nonumber\\
			&=\int_{|r^2/p-\mu|>a} r^2(\mb{x})d\mb{x}\nonumber\\
			&\leq\|r^2/p\|_{p} \sqrt{\mathbb{P}(|r^2/p-\mu|>a;p)},\label{eqn:01}
		\end{align}
		where the Cauchy-Schwarz inequality is used in the last step. 
		
		The second term $I_2$ on the right-hand side can be bounded as
		\begin{align}
			\mathbb{E}_p\left|Q_p(\hat{r}^2)-\mathbb{E}[\hat{r}^2]\right|&\leq \sqrt{\mathrm{Var}_p(Q_p(\hat{r}^2))}\nonumber\\
			&\leq N^{-1/2}\sqrt{\mathrm{Var}_p(\hat{r}^2(\mb{X})/p(\mb{X}))}\nonumber\\
			&\leq aN^{-1/2},\label{eqn:02}
		\end{align}
		where in the last step we used the fact that for any variable $\alpha\leq Y\leq \beta$, $\mathrm{Var}(Y)\leq\frac{(\alpha-\beta)^2}{4}$ with probability 1. The third term $I_3$ on the right-hand side can be bounded the same way as $I_1$. 
		
		We now estimate the tail probability $\mathbb{P}(|r^2/p-\mu|>a;p)$. Using the correspondence between $L_1$ norm and total variation distance for two probability measures as well as the Pinsker's inequality, we have
		\begin{equation}
			\|p-p^*\|_{L_1}=2\delta(p,p^*)\leq \sqrt{2D_{\mathsf{KL}}(p\|p^*)}\leq\sqrt{2\varepsilon},
		\end{equation}
		which yields that
		\begin{equation}
			\mathbb{E}_p\left[\left|r^2/p-\mu\right|\right]\leq \mu\sqrt{2\varepsilon}.
		\end{equation}
		From the Markov inequality, we have
		\begin{equation}\label{eqn:tail_prob}
			\mathbb{P}\left(\left|r^2/p-\mu\right|\geq a;p\right)\leq\frac{\mu\sqrt{2\varepsilon}}{a},
		\end{equation}
		where the probability is with respect to PDF $p(\mb{x})$. Combining the bounds for $I_i$, $i=1,2,3$, and equation \eqref{eqn:tail_prob}, we reach the conclusion.
	\end{proof}

		\section{Proof of Theorem \ref{thm_error_bound}}
		\begin{proof}
			By Assumption \ref{assump_norm_relations}, we have
			\begin{equation*}
				\begin{aligned}
					&\norm{u(\mb{x};\Theta_N^{*, (k)}) - u(\mb{x})}{2, \Omega}  \\
					\leq &  C_1^{-1} \left[ \norm{\mathcal{L}(u(\mb{x};\Theta_N^{*, (k)}) - u(\mb{x}))}{2, \Omega} + \norm{\mathfrak{b} (u(\mb{x};\Theta_N^{*, (k)}) - u(\mb{x}))}{2, \partial \Omega} \right]\\
					\leq & \sqrt{2} C_1^{-1} \left( \norm{\mathcal{L}(u(\mb{x};\Theta_N^{*, (k)}) - u(\mb{x}))}{2, \Omega}^2 + \norm{\mathfrak{b} (u(\mb{x};\Theta_N^{*, (k)}) - u(\mb{x}))}{2, \partial \Omega}^2 \right)^{\frac{1}{2}}.
				\end{aligned}
			\end{equation*}	
			Combining $\mathcal{L}u(\mb{x}) = s(\mb{x})$, $\mathfrak{b} u(\mb{x}) = g(\mb{x})$, $r(\mb{x};\Theta_N^{*, (k)}) = \mathcal{L} u(\mb{x};\Theta_N^{*, (k)}) - s(\mb{x})$ and  $b(\mb{x};\Theta_N^{*, (k)}) = \mathfrak{b} u(\mb{x};\Theta_N^{*, (k)}) - g(\mb{x})$ gives 
			\begin{equation} \label{eq_error_continuos}
				\norm{u(\mb{x};\Theta_N^{*, (k)}) - u(\mb{x})}{2, \Omega} \leq  \sqrt{2} C_1^{-1} \left( \norm{r(\mb{x};\Theta_N^{*, (k)})}{2, \Omega}^2 + \norm{b(\mb{x};\Theta_N^{*, (k)}) }{2, \partial \Omega}^2 \right)^{\frac{1}{2}}.
			\end{equation}
			Noting that $\mathbb{E}(R_k) = \norm{r(\mb{x};\Theta_N^{*, (k)})}{2, \Omega}^2$, and according to the Hoeffding inequality, we have
			\begin{equation} \label{eq_cheb}
				\mathbb{P} \left( R_k - \mathbb{E}(R_k)  \geq -\varepsilon  \right) \geq 1 - \mathrm{exp}\left( \frac{-2N_r \varepsilon^2} {(\tau_2-\tau_1)^2} \right).
			\end{equation}
			Combining \eqref{eq_error_continuos} and \eqref{eq_cheb} gives that
			\begin{equation*}
				\norm{u(\mb{x};\Theta_N^{*, (k)}) - u(\mb{x})}{2, \Omega} \leq \sqrt{2} C_1^{-1} \left( R_k + \varepsilon + \norm{b(\mb{x};\Theta_N^{*,(k)})}{2, \partial \Omega}^2 \right)^{\frac{1}{2}}
			\end{equation*}
			with probability at least $1 - \mathrm{exp}(-2N_r \varepsilon^2/(\tau_2-\tau_1)^2)$.
		\end{proof}

		\section{Proof of Corollary \ref{cor:R_k}}
		\begin{proof}
			Noting that 
			\begin{equation*}
				\Theta_N^{*, (k+1)} = \arg \min_{\Theta} \frac{1}{N_r} \sum_{i=1}^{N_r} \frac{r^2(\mb{x}_{\Omega}^{(i)};\Theta)}{\hat{p}_{\mathsf{KRnet}}(\mb{x}_{\Omega}^{(i)};\Theta_f^{*, (k)})}.
			\end{equation*} 
			Since $\Theta_N^{*, (k+1)}$ is the optimal solution at the $(k+1)$-th stage, we have 
			\begin{equation} \label{eq_discrete_rk}
				R_{k+1} = \frac{1}{N_r} \sum\limits_{i=1}^{N_r} \frac{r^2(\mb{x}_{\Omega}^{(i)};\Theta_N^{*, (k+1)})}{\hat{p}_{\mathsf{KRnet}}(\mb{x}_{\Omega}^{(i)};\Theta_f^{*, (k)})} \leq \frac{1}{N_r} \sum\limits_{i=1}^{N_r} \frac{r^2(\mb{x}_{\Omega}^{(i)};\Theta_N^{*, (k)})}{\hat{p}_{\mathsf{KRnet}}(\mb{x}_{\Omega}^{(i)};\Theta_f^{*, (k)})}.
			\end{equation}
			Plugging $\hat{p}_{\mathsf{KRnet}}(\mb{x};\Theta_f^{*, (k)}) = c_k r^2(\mb{x};\Theta_N^{*, (k)})$ into \eqref{eq_discrete_rk} gives that
			\begin{equation*}
				R_{k+1} \leq \frac{1}{c_k}.
			\end{equation*}	
			Noting that $R_{k+1}$ is a random variable and taking its expectation, it follows that
			\begin{equation*}
				\mathbb{E}(R_{k+1})  \leq \frac{1}{c_k} =  \int_{\Omega} r^2(\mb{x};\Theta_N^{*,(k)})d\mb{x} = \mathbb{E}(R_k),
			\end{equation*} 
			which completes the proof.
		\end{proof}
	\end{appendices}

\end{document}